\documentclass[12pt]{amsart}

\usepackage{amsmath,bm,amssymb,amsthm,textcomp}
\usepackage{enumitem}
\usepackage{mathtools}
\usepackage{hyperref}
\usepackage[numbers,sort&compress]{natbib}
\usepackage{tikz-cd}
\usetikzlibrary{cd}
\DeclareMathOperator{\admissible}{ad}
\DeclareMathOperator{\diam}{diam}
\DeclareMathOperator{\len}{len}
\DeclareMathOperator{\hdim}{\dim_H}

\setlength{\textwidth}{15cm}
\setlength{\oddsidemargin}{4mm}
\setlength{\evensidemargin}{4mm}

\theoremstyle{plain}
\newtheorem{theorem}{Theorem}[section]
\newtheorem{lemma}[theorem]{Lemma}
\newtheorem{proposition}[theorem]{Proposition}
\newtheorem{corollary}[theorem]{Corollary}
\newtheorem*{claim}{Claim}

\theoremstyle{definition}

\theoremstyle{remark}
\newtheorem{remark}[theorem]{Remark}
\newtheorem{remarks}[theorem]{Remarks}

\counterwithin{equation}{section}

\allowdisplaybreaks

\begin{document}

\title{Hausdorff dimensions in Pierce expansions}

\author{Min Woong Ahn}
\address{Department of Mathematics, SUNY at Buffalo, Buffalo, NY 14260-2900, USA}
\email{minwoong@buffalo.edu}

\date{\today}

\subjclass[2020]{Primary 11K55; Secondary 28A80}
\keywords{Pierce expansion, exceptional set, Hausdorff dimension}

\begin{abstract}
The digits of the Pierce expansion obey the law of large numbers, the central limit theorem, and the law of the iterated logarithm in the Lebesgue measure sense. We calculate the Hausdorff dimensions of the exceptional sets associated with each of the three laws. We further determine the Hausdorff dimensions of certain sets arising in Pierce expansions.
\end{abstract}

\maketitle

\tableofcontents

\section{Introduction} \label{introduction}

Given $x \in (0,1]$, we define
\begin{align} \label{Pierce algorithm 1}
d_1(x) \coloneqq \left\lfloor \frac{1}{x} \right\rfloor
\quad \text{and} \quad 
T(x) \coloneqq 1 - d_1 (x) x,
\end{align}
where $\lfloor y \rfloor$ denotes the largest integer not exceeding $y \in \mathbb{R}$. A sequence $(d_k(x))_{k \in \mathbb{N}}$ associated with $x$ is recursively defined by
\begin{align} \label{Pierce algorithm 2}
d_k(x) \coloneqq d_1 (T^{k-1}(x)) \quad \text{provided that } T^{k-1}(x) \neq 0, 
\end{align}
for each $k \in \mathbb{N}$. We denote by $\langle d_1(x), d_2(x), \dots\rangle_P$ the {\em Pierce expansion} (or {\em alternating Engel expansion}) of $x$, i.e., 
\begin{align} \label{Pierce expansion}
x = \langle d_1(x), d_2(x), \dotsc \rangle_P \coloneqq \sum_{j=1}^\infty \left( (-1)^{j+1} \prod_{k=1}^j \frac{1}{d_k(x)} \right),
\end{align}
where $( d_k(x) )_{k \in \mathbb{N}}$, called the sequence of Pierce expansion {\em digits}, is defined by \eqref{Pierce algorithm 1} and \eqref{Pierce algorithm 2}. If $n \coloneqq \inf \{ k \in \mathbb{N} : T^k(x) = 0 \}$ is infinite, then $(d_k(x))_{k \in \mathbb{N}}$ is in $\mathbb{N}^\mathbb{N}$ and $d_k(x) < d_{k+1}(x)$ for all $k \in \mathbb{N}$. Otherwise, $d_k(x)$ is not defined for all $k \geq n+1$, so that \eqref{Pierce expansion} reduces to a sum of finitely many terms, in which case we write $x = \langle d_1(x), d_2(x), \dotsc, d_n(x) \rangle_P$. If, further, $n \neq 1$, then $d_{n-1}(x) + 1 < d_n(x)$ (see \cite[Proposition 2.2]{Ahn23} and \cite[pp.~23--24]{Sha86}). 

It is a classical well-known fact that every irrational number in $(0,1]$ has an infinite length sequence of Pierce digits, while every rational number has a finite length sequence of Pierce digits. Moreover, there is a one-to-one correspondence via the Pierce expansion between the irrationals in $(0,1]$ and the collection of all strictly increasing infinite sequences of positive integers. We refer the reader to \cite{Ahn23, DDF22, ES91, Fan15, PVB98, Pie29, Sha84, Sha86, Sha93, Var17, VBP99} or \cite[Chapter 2]{Sch95} for the arithmetic and metric properties and applications of Pierce expansions.

In 1986, Shallit \cite{Sha86} examined the limiting behavior of the Pierce expansion digits and obtained the following results:
\begin{enumerate}[label=\upshape\arabic*., leftmargin=*, widest=3]
\item
{\em The law of large numbers (LLN).} (p.~36, Theorem 16) For Lebesgue-almost every $x \in (0,1]$,
\begin{align} \label{law of large numbers}
\lim_{n \to \infty} (d_n(x))^{1/n} = e.
\end{align}
\item
{\em The central limit theorem (CLT).} (p.~36, Theorem 17) For any $t \in \mathbb{R}$,
\begin{align} \label{central limit theorem}
\lim_{n \to \infty} \lambda \left( \frac{\log d_n(x) - n}{\sqrt{n}} < t \right) = \Phi (t),
\end{align}
where $\lambda$ denotes the Lebesgue measure on $(0,1]$ and $\Phi \colon \mathbb{R} \to \mathbb{R}$ is the normal distribution given by $\Phi (t) \coloneqq ({1}/{\sqrt{2\pi}}) \int_{-\infty}^t e^{-u^2/2} \, du$.
\item
{\em The law of the iterated logarithm (LIL).} (p.~37, Theorem 18) For Lebesgue-almost every $x \in (0,1]$,
\begin{align} \label{law of the iterated logarithm}
\limsup_{n \to \infty} \frac{\log d_n(x) - n}{\sqrt{2n \log \log n}} = 1
\quad \text{and} \quad
\liminf_{n \to \infty} \frac{\log d_n(x) - n}{\sqrt{2n \log \log n}} = -1.
\end{align}
\end{enumerate}

We remark that whenever we investigate the limiting behaviors of the digits, e.g., the above three laws, we confine ourselves to irrational numbers only. This is because, as discussed in the second paragraph, any rational number $x \in (0,1]$ has the digit sequence of finite length, i.e., $d_n(x)$ is not defined from some point onwards. Since rational numbers are countable, such a restriction does not affect the Hausdorff dimension of any set in $(0,1]$.

A brief mention of the Engel expansion, which can be seen as a non-alternating version of the Pierce expansion, is in order. In 1958, Erd{\H o}s, R{\'e}nyi, and Sz{\"u}sz \cite{ERS58} established the same result as \eqref{law of large numbers} in the context of Engel expansions. To be precise, let $\langle \delta_1(x), \delta_2(x), \dotsc \rangle_E$ denote the Engel expansion of $x \in (0,1]$, i.e.,
\[
x = \langle \delta_1(x), \delta_2(x), \dotsc \rangle_E \coloneqq \sum_{j=1}^\infty \left( \prod_{k=1}^j \frac{1}{\delta_k(x)} \right),
\]
where $( \delta_k (x) )_{k \in \mathbb{N}}$ is a sequence of positive integers satisfying $2 \leq \delta_{k}(x) \leq \delta_{k+1}(x)$ for each $k \in \mathbb{N}$ (see \cite[p.~17]{Gal76}). The authors of \cite{ERS58} proved the LLN of Engel expansion digits: {\em For Lebesgue-almost every $x \in (0,1]$,}
\begin{align} \label{Engel convergence law}
\lim_{n \to \infty} ( \delta_n (x) )^{1/n} = e .
\end{align}
The CLT and the LIL in terms of Engel expansion digits, analogous to \eqref{central limit theorem} and \eqref{law of the iterated logarithm}, respectively, were also proved by the authors.

Regarding the size of certain subsets of $(0,1]$ arising in the Engel expansion, Galambos brought up the following three questions \cite[p.~132, Problem 9]{Gal76}:
\begin{enumerate}[label=\upshape Q\arabic*., ref=Q\arabic*, leftmargin=*, widest=3]
\item \label{Q1}
What is the Hausdorff dimension of the set where \eqref{Engel convergence law} fails?
\item \label{Q2}
For each $k \in \mathbb{N}$, let
\[
A^{(E)}_k \coloneqq \{ x \in (0,1] : \log \delta_n(x) \geq kn \text{ for all } n \in \mathbb{N} \}.
\]
What is the Hausdorff dimension of the set $A^{(E)}_k$?
\item \label{Q3}
For each $k \in \mathbb{N}$, let
\[
B^{(E)}_k \coloneqq \left\{ x \in (0,1] : \frac{\delta_{n+1}(x)}{\delta_n(x)} \leq k \text{ for all } n \in \mathbb{N} \right\}.
\]
What is the Hausdorff dimension of the set $B^{(E)}_k$?
\end{enumerate}
The first question, evidently, is directly concerned with the LLN, and the other two questions are also related to some of the three mentioned limit theorems. See Remarks \ref{remark B k}(\ref{remark B k 2}) and \ref{remark to corollary LIL exceptional set}, where we discuss these relations in the context of Pierce expansions.

The first answer to these problems became known twenty-four years after the problems were posed. For \ref{Q1}, in 2000, Jun Wu \cite{Wu00} proved that the set where \eqref{Engel convergence law} fails is of full Hausdorff dimension by an ingenious construction. Specifically, for some fixed subset $A \subseteq (0,1]$ with positive Lebesgue measure (hence with full Hausdorff dimension), he defined maps $g_n \colon A \to (0,1]$ for all large enough $n \in \mathbb{N}$ such that \eqref{Engel convergence law} fails on $g_n(A)$ and $g_n^{-1} \colon g_n(A) \to A$ is $(1/\alpha_n)$-H{\"o}lder continuous, where $\alpha_n \to 1^+$ as $n \to \infty$, in which case the Hausdorff dimension of $g_n(A)$ can be made arbitrarily close to $1$. One year later, Liu and Jun Wu \cite{LW01} formulated a more general statement than what is required to answer \ref{Q1} and \ref{Q2}: {\em For each $\alpha \in [1, \infty)$, let
\[
A^{(E)}(\alpha) \coloneqq \left\{ x \in (0,1] : \lim_{n \to \infty} ( \delta_n(x) )^{1/n} = \alpha \right\}.
\]
Then $\hdim A^{(E)}(\alpha) = 1$ for each $\alpha \in [1, \infty)$, where $\hdim$ denotes the Hausdorff dimension.} As corollaries, they showed that the Hausdorff dimensions in \ref{Q1} and \ref{Q2} both equal $1$. For \ref{Q3}, Wang and Jun Wu \cite{WW07} proved that $B_k^{(E)}$ is also of full Hausdorff dimension for each $k \in \mathbb{N}$ with $k \geq 2$. Since then, a number of studies concerning the Hausdorff dimension of certain sets defined in terms of Engel expansion digits have been conducted. See, e.g., \cite{FW18, LW03, LL18, SW20, SW21, She10, Wu03} for further results. Recently, Shang and Min Wu \cite{SW21} determined the Hausdorff dimension of the set
\[
E^{(E)} (\phi) \coloneqq \left\{ x \in (0,1] : \lim_{n \to \infty} \frac{\log \delta_n(x)}{\phi (n)} = 1 \right\},
\]
where $\phi \colon \mathbb{N} \to (0, \infty)$ is a non-decreasing function satisfying $\phi (n) / \log n \to \infty$ as $n \to \infty$.

Returning to the main topic, it has been thirty-eight years since Shallit \cite{Sha86} established three laws \eqref{law of large numbers}--\eqref{law of the iterated logarithm} which the infinite digit sequences of Pierce expansions obey. One natural question is to determine the Hausdorff dimension of certain sets defined in terms of Pierce expansion digits. However, unlike the Engel expansion, its alternating counterpart has not been receiving much attention in the field of fractal studies.

It is worth pointing out that the study of Hausdorff dimension in the Pierce expansion context is interesting in its own right. Engel expansions and Pierce expansions have similar but distinct metric properties. Based on the similarities, it seems reasonable to expect that a set defined in terms of Pierce expansion digits and its Engel expansion counterpart have the same Hausdorff dimension.

In this paper, we determine the Hausdorff dimensions of certain subsets of the half-open unit interval $(0,1]$ defined in terms of Pierce expansion digits, including the sets originating from the LLN, the CLT, and the LIL. In the process, the three problems of Galambos for Pierce expansions will be completely solved. The main results are as follows. For reference, we list the papers in which analogous results for Engel expansions are present, wherever possible.

The first main result is concerned with the growth speed of the digits and the LLN. (See \cite[Theorems 3.2 and 3.6]{SW20} and \cite[Theorem 3.1]{SW21} for analogous results for the Engel expansion.)

\begin{theorem} \label{theorem E phi} 
Let $\phi \colon \mathbb{N} \to (0, \infty)$ be a non-decreasing function such that $\phi (n)/\log n \to \gamma$ as $n \to \infty$ for some $\gamma \in [0, \infty]$. Define 
\begin{align} \label{definition of E phi}
E(\phi) \coloneqq \left\{ x \in (0,1]: \lim_{n \to \infty} \frac{\log d_n(x)}{\phi (n)} = 1 \right\}.
\end{align}
Then
\[
\hdim E(\phi) = \max \left\{ 0, \frac{1-\gamma^{-1}}{1+\xi} \right\},
\quad \text{where} \quad \xi \coloneqq \limsup_{n \to \infty} \frac{\phi (n+1)}{\sum \limits_{k=1}^n \phi (k)},
\]
with the conventions $\infty^{-1} = 0$ and $0^{-1} = \infty$. More precisely,
\[
\hdim E(\phi) =
\begin{cases} 
(1+\xi)^{-1}, &\text{if } \gamma = \infty; \\
1 - \gamma^{-1}, &\text{if } \gamma \in [1, \infty); \\
0, &\text{if } \gamma \in [0, 1), \text{ because } E(\phi) = \varnothing.
\end{cases}
\]
\end{theorem}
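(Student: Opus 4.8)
The plan is to split into the three regimes and to handle the bulk of the work through a single master estimate on the Hausdorff dimension of sets cut out by prescribing the growth rate of the digits.

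First, the easy regime. When $\gamma \in [0,1)$ I would argue $E(\phi) = \varnothing$: for an irrational $x$ the Pierce digits are strictly increasing, so $d_n(x) \ge d_1(x) + (n-1) \ge n$, whence $\log d_n(x) \ge \log n$. If $x \in E(\phi)$ then $\log d_n(x) \sim \phi(n)$, so $\phi(n) \ge (1-o(1))\log n$, forcing $\gamma = \lim \phi(n)/\log n \ge 1$. Since rationals are countable and hence dimension-zero, $\hdim E(\phi) = 0$ in this case.

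For the remaining regimes, the upper bound and the lower bound are proved separately. For the \emph{upper bound} I would use the natural covering of $(0,1]$ by Pierce cylinders $I(a_1,\dots,a_n) = \{x : d_1(x) = a_1, \dots, d_n(x) = a_n\}$ for strictly increasing $(a_1,\dots,a_n)$; the key fact (from \cite{Ahn23,Sha86} or re-derived) is that $\diam I(a_1,\dots,a_n) \asymp \prod_{k=1}^n a_k^{-1}$, up to constants depending only on $a_n$ (one over $a_n(a_n+1)$ versus one over $a_n^2$). On $E(\phi)$, for every $\varepsilon > 0$ and all large $n$ one has $a_k = d_k(x) \in [e^{(1-\varepsilon)\phi(k)}, e^{(1+\varepsilon)\phi(k)}]$, so the number of admissible strings of length $n$ with the right growth is controlled by $\prod_k e^{(1+\varepsilon)\phi(k)}$ while each cylinder has diameter roughly $e^{-(1-\varepsilon)\sum_{k\le n}\phi(k)}$ (or, for a more refined count, one tracks the last digit separately). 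Plugging into the definition of $s$-dimensional Hausdorff measure and optimizing the exponent gives, after letting $\varepsilon \to 0$, the bound $\hdim E(\phi) \le (1-\gamma^{-1})/(1+\xi)$ when $\gamma < \infty$ and $\le (1+\xi)^{-1}$ when $\gamma = \infty$; the appearance of $\xi = \limsup_n \phi(n+1)/\sum_{k\le n}\phi(k)$ is exactly what measures how much the final free digit inflates the covering count relative to the diameter scale.

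For the \emph{lower bound} I would construct, inside $E(\phi)$, a Cantor-type subset $F$ by allowing at each level $n$ a range of digits $d_n \in [e^{(1-\varepsilon)\phi(n)}, e^{(1-\varepsilon/2)\phi(n)}]$ (or a comparably large window chosen so the growth condition \eqref{definition of E phi} still holds on $F$), and then estimate $\hdim F$ from below via the standard mass-distribution principle: put the natural measure giving equal mass to each surviving cylinder at each level, and bound the local scaling exponent $\log \mu(I_n)/\log \diam I_n$ from below using the two-sided control on both the number of children of each cylinder (about $e^{(1-\varepsilon)\phi(n)}$ of them) and on $\diam I_n \asymp \prod_{k\le n} d_k^{-1}$. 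Taking $\varepsilon \to 0$ recovers the claimed value. The case $\gamma \in [1,\infty)$ and the case $\gamma = \infty$ differ only in bookkeeping: when $\gamma$ is finite, $\sum_{k\le n}\phi(k) \sim \gamma n\log n$ and $\log\diam I_n^{-1} \sim (1+o(1))\gamma n\log n$ while $\log(\text{number of level-}n\text{ cylinders}) \sim (1-\gamma^{-1})\cdot\gamma n\log n$ — here one must be slightly careful that $\xi = 0$ automatically in this regime (since $\phi(n+1) = O(\log n) = o(n\log n)$), which is why the finite-$\gamma$ formula has no $\xi$; when $\gamma = \infty$ the term $\phi(n+1)$ is no longer negligible against $\sum_{k\le n}\phi(k)$ and the $(1+\xi)^{-1}$ shape emerges.

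The main obstacle I anticipate is the upper-bound covering argument in the $\gamma = \infty$ regime: there the diameters of cylinders of a fixed level $n$ are highly non-uniform (they span a range governed by the last digit $a_n$, which itself ranges over an interval of multiplicative width $e^{\Theta(\phi(n))}$), so a naive single-scale cover is wasteful and one must instead organize the cover by grouping cylinders according to the size of their last digit and sum a geometric-type series in that parameter; getting the exponent to come out as exactly $(1+\xi)^{-1}$ rather than something weaker requires doing this summation carefully and using the precise definition of $\xi$ as a $\limsup$. A secondary technical point is verifying that the Cantor set $F$ in the lower bound genuinely lies in $E(\phi)$, i.e.\ that the chosen digit windows are narrow enough that $\log d_n(x)/\phi(n) \to 1$ rather than merely staying bounded; this is a routine but necessary check.
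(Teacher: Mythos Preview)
Your overall architecture---empty set for $\gamma<1$, cylinder covers for the upper bound, a Cantor subset with a mass-distribution argument for the lower bound---is exactly the paper's, and your treatment of the $\gamma=\infty$ case is essentially correct (in fact the anticipated ``non-uniform diameter'' difficulty does not arise: a single-scale cover by the basic intervals $\widetilde{\mathcal J}(\sigma)$ already gives $(1+\xi)^{-1}$).

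The genuine gap is in the finite-$\gamma$ regime, where your sketch does not produce the factor $1-\gamma^{-1}$. For the upper bound, your count ``number of admissible strings $\lesssim \prod_k e^{(1+\varepsilon)\phi(k)}$'' against ``diameter $\gtrsim e^{-(1-\varepsilon)\sum_k\phi(k)}$'' yields only $(1+\varepsilon)/(1-\varepsilon)\cdot(1+\xi)^{-1}$, which collapses to the trivial bound $1$ since $\xi=0$ here. What is missing is the \emph{second} counting bound the paper uses: because the Pierce digits are strictly increasing and bounded by $e^{(1+\varepsilon)\phi(n)}$, one has $\#\widetilde\Upsilon_n\le \binom{\lfloor e^{(1+\varepsilon)\phi(n)}\rfloor}{n}$, and Stirling then gives $\log\#\widetilde\Upsilon_n\le n(1+\varepsilon)\phi(n)-n\log n+O(n)$. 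Divided by $(1-\varepsilon)\sum_{k\le n}\phi(k)\sim(1-\varepsilon)\gamma\,n\log n$, the $-n\log n$ term is exactly what yields the $-\gamma^{-1}$ correction. For the lower bound, your proposed window $d_n\in[e^{(1-\varepsilon)\phi(n)},e^{(1-\varepsilon/2)\phi(n)}]$ fails on two counts: it does not force $\log d_n/\phi(n)\to 1$, and even if narrowed it gives $\approx e^{\phi(n)}$ children per node, hence lower bound $\to 1$, inconsistent with your own claimed count $(1-\gamma^{-1})\gamma n\log n$. The paper's fix is to take an \emph{additively} narrow window $d_n\in(nu_n,(n+1)u_n]$ with $u_n=e^{(1-\gamma^{-1})\phi(n)+1}$: then $\log d_n=\log n+(1-\gamma^{-1})\phi(n)+O(1)\sim \phi(n)$ (using $\log n\sim\gamma^{-1}\phi(n)$), the windows are automatically separated since $u_n$ is non-decreasing, and the branching number is $u_n\approx e^{(1-\gamma^{-1})\phi(n)}$, which against diameter $\approx e^{-\sum\phi(k)}$ gives exactly $1-\gamma^{-1}$.
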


\begin{remarks}
\begin{enumerate}[label=\upshape(\arabic*), ref=\arabic*, leftmargin=*, widest=2]
\item
It suffices to assume only $\limsup_{n \to \infty} \phi (n)/ \log n < 1$ to have $E(\phi) = \varnothing$; see Lemma \ref{E phi lemma 1}.

\item
When $\gamma = 1$, although $E(\phi)$ is of null Hausdorff dimension, it is not necessarily empty. For instance, let $\phi \colon \mathbb{N} \to (0, \infty)$ be a non-decreasing function defined by $\phi (n) \coloneqq \log n$ for each $n \in \mathbb{N}$. By Proposition \ref{strict increasing condition}, there exists $x \in (0,1]$ such that $d_n(x) = n$ for all $n \in \mathbb{N}$. Then $x \in E(\phi)$. Similarly, when $\gamma = \xi = \infty$, the set $E(\phi)$ has Hausdorff dimension zero but it is not necessarily empty.
\end{enumerate}
\end{remarks}

The LLN states that $(d_n (x))^{1/n} \to e$ as $n \to \infty$ for Lebesgue-almost every $x \in (0,1]$. The following corollary to Theorem \ref{theorem E phi} tells us that for any $\alpha \in [1, \infty]$, the set on which $(d_n (x))^{1/n} \to \alpha$ as $n \to \infty$ is also quite large in the Hausdorff dimension sense. (See \cite[Theorem]{LW01} for an analogous result for the Engel expansion.)

\begin{corollary} \label{corollary A alpha}
For each $\alpha \in [-\infty, \infty]$, let
\begin{align} \label{definition of A alpha}
A(\alpha) \coloneqq \left\{ x \in (0,1]: \lim_{n \to \infty} (d_n (x))^{1/n} = \alpha \right\}.
\end{align}
Then,
\[
\hdim A(\alpha) =
\begin{cases}
1, &\text{if } \alpha \in [1,\infty]; \\
0, &\text{if } \alpha \in [-\infty,1), \text{ because } A(\alpha) = \varnothing.
\end{cases}
\]
\end{corollary}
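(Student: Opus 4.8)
The plan is to reduce Corollary \ref{corollary A alpha} to Theorem \ref{theorem E phi} by passing to logarithms, using in addition only the monotonicity of Hausdorff dimension and the elementary bound $d_n(x) \ge n$, which holds for every irrational $x \in (0,1]$ because $d_1(x) = \lfloor 1/x \rfloor \ge 1$ and the digit sequence is strictly increasing in $\mathbb N$. From $d_n(x) \ge n$ we get $(d_n(x))^{1/n} \ge n^{1/n}$, and since $n^{1/n} \to 1$ this gives $\liminf_{n\to\infty}(d_n(x))^{1/n} \ge 1$ for every irrational $x$; hence if the limit defining $A(\alpha)$ exists it is at least $1$, so $A(\alpha) = \varnothing$ and $\hdim A(\alpha) = 0$ whenever $\alpha \in [-\infty, 1)$. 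This disposes of the second case.

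For $\alpha \in (1,\infty)$ I would use an exact identification. Since $d_n(x) > 0$ and $\alpha > 0$, taking logarithms shows that $(d_n(x))^{1/n} \to \alpha$ is equivalent to $(\log d_n(x))/(n\log\alpha) \to 1$; that is, $A(\alpha) = E(\phi_\alpha)$ where $\phi_\alpha(n) \coloneqq n\log\alpha$ is a positive, non-decreasing function with $\phi_\alpha(n)/\log n \to \infty$, so $\gamma = \infty$ in the notation of Theorem \ref{theorem E phi}. Using $\sum_{k=1}^n k = n(n+1)/2$ one computes
\[
\xi = \limsup_{n\to\infty} \frac{(n+1)\log\alpha}{(\log\alpha)\sum_{k=1}^{n} k} = \limsup_{n\to\infty}\frac{2}{n} = 0,
\]
so Theorem \ref{theorem E phi} gives $\hdim A(\alpha) = (1+\xi)^{-1} = 1$.

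The boundary values $\alpha = 1$ and $\alpha = \infty$ are not of the form $E(\phi)$ for any admissible $\phi$, so here I would pass through an inclusion and invoke monotonicity of $\hdim$. If $\psi \colon \mathbb N \to (0,\infty)$ is non-decreasing with $\psi(n)/n \to 0$, then $x \in E(\psi)$ forces $(\log d_n(x))/n \sim \psi(n)/n \to 0$, whence $(d_n(x))^{1/n} \to 1$; thus $E(\psi) \subseteq A(1)$. Choosing $\psi(n) \coloneqq \sqrt{n}$ gives $\gamma = \infty$, and $\sum_{k=1}^{n}\sqrt{k} \sim \tfrac23 n^{3/2}$ gives $\xi = 0$, so Theorem \ref{theorem E phi} yields $\hdim A(1) \ge \hdim E(\psi) = 1$. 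Symmetrically, if $\psi$ is non-decreasing with $\psi(n)/n \to \infty$, then $E(\psi) \subseteq A(\infty)$; taking $\psi(n) \coloneqq n\log n$ (so again $\gamma = \infty$ and $\xi = 0$) gives $\hdim A(\infty) \ge 1$. Since every subset of $(0,1]$ has Hausdorff dimension at most $1$, all three lower bounds are in fact equalities, and the corollary follows.

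I do not expect a genuine obstacle: once Theorem \ref{theorem E phi} is in hand, the corollary is essentially bookkeeping. The only points needing a little care are the asymptotics of $\sum_{k=1}^{n} k^{s}$ used to check $\xi = 0$ for each test function, and verifying that $\phi_\alpha$, $\sqrt{\cdot}\,$, and $n \log n$ all meet the hypotheses of Theorem \ref{theorem E phi} (non-decreasing, positive-valued, with $\phi(n)/\log n$ convergent in $[0,\infty]$).
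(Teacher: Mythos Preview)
Your approach is essentially identical to the paper's: the same reduction to Theorem \ref{theorem E phi}, the same test functions $\phi(n)=n\log\alpha$ for $\alpha\in(1,\infty)$ and $\phi(n)=\sqrt{n}$ for $\alpha=1$, and the same monotonicity argument. The only discrepancy is at $\alpha=\infty$, where the paper takes $\phi(n)=n^2$ while you take $\psi(n)=n\log n$; note that your choice fails the hypothesis $\phi\colon\mathbb{N}\to(0,\infty)$ at $n=1$ since $\psi(1)=0$, so either replace it by $n^2$ (or $n\log(n+1)$) or argue that modifying $\phi$ at a single index does not affect $E(\phi)$.
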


By Corollary \ref{corollary A alpha}, we can immediately answer \ref{Q1} in the context of Pierce expansions. (See \cite[Corollary 3]{LW01} and \cite[Theorem]{Wu00} for an analogous result for the Engel expansion.)

\begin{corollary} \label{corollary LLN exceptional set}
The Hausdorff dimension of the set where \eqref{law of large numbers} fails is $1$, i.e.,
\[
\hdim \left\{ x \in (0,1] : (d_n (x))^{1/n} \not \to e \text{ as } n \to \infty \right\} = 1.
\]
\end{corollary}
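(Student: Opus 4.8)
The plan is to obtain this as an immediate consequence of Corollary~\ref{corollary A alpha} together with the monotonicity of Hausdorff dimension. Write $N \coloneqq \{x \in (0,1] : (d_n(x))^{1/n} \not\to e \text{ as } n \to \infty\}$ for the exceptional set in the statement. The first (and essentially only) observation is that $N$ contains $A(\alpha)$ for every $\alpha \in [1,\infty]$ with $\alpha \neq e$: if $(d_n(x))^{1/n} \to \alpha \neq e$, then in particular $(d_n(x))^{1/n}$ fails to converge to $e$, so $x \in N$. Choosing a convenient value --- say $\alpha = \infty$, or $\alpha = 3$ since $e < 3$ --- gives the inclusion $A(\infty) \subseteq N$ (equivalently $A(3) \subseteq N$).

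Next I would invoke Corollary~\ref{corollary A alpha} to get $\hdim A(\infty) = 1$ (the hypothesis $\infty \in [1,\infty]$ is satisfied). By monotonicity of Hausdorff dimension under inclusion, $\hdim N \geq \hdim A(\infty) = 1$. Since $N \subseteq (0,1]$ trivially forces $\hdim N \leq 1$, we conclude $\hdim N = 1$. The fact that rational numbers --- whose Pierce digit sequences are finite, so that the limit in \eqref{law of large numbers} is undefined for them --- form a countable set means they cannot alter this dimension, as already noted in the introduction; hence there is no ambiguity in how ``fails'' is interpreted on the rationals.

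I do not expect any genuine obstacle here: all the substantive content has already been absorbed into Theorem~\ref{theorem E phi} and Corollary~\ref{corollary A alpha}, whose proofs supply full-dimensional subsets of $(0,1]$ on which $(d_n(x))^{1/n}$ converges to a prescribed value different from $e$. The corollary is therefore proved in two lines by combining one set inclusion with the monotonicity of $\hdim$.
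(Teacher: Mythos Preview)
Your proof is correct and follows essentially the same approach as the paper: pick some $\alpha \in [1,\infty] \setminus \{e\}$, observe that $A(\alpha)$ is contained in the exceptional set, and apply Corollary~\ref{corollary A alpha} together with monotonicity of Hausdorff dimension. The only cosmetic difference is that the paper chooses $\alpha = 1$ whereas you suggest $\alpha = \infty$ or $\alpha = 3$; any value other than $e$ works equally well.
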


Historically, for the Engel expansion, an analogous version of the following corollary was proved earlier than the analogues of Theorem \ref{theorem E phi}. Notice that Corollary \ref{corollary E phi} has more restrictive conditions on the map $\phi$ than the theorem does. (See \cite[Theorem 1.1]{LL18} for an analogous result for the Engel expansion.)

\begin{corollary} \label{corollary E phi}
Let $\phi \colon \mathbb{N} \to (0, \infty)$ be a non-decreasing function such that $\phi (n+1) - \phi (n) \to \infty$ as $n \to \infty$. Let $E(\phi)$ and $\xi$ be defined as in Theorem \ref{theorem E phi}. Then
\[
\hdim E(\phi) = (1+\xi)^{-1}.
\]
\end{corollary}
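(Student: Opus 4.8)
The plan is to deduce this directly from Theorem \ref{theorem E phi}: I would show that the stronger hypothesis $\phi(n+1)-\phi(n)\to\infty$ forces the quantity $\gamma=\lim_{n\to\infty}\phi(n)/\log n$ to exist and equal $\infty$, so that Corollary \ref{corollary E phi} falls into the first case of the theorem, whose value is precisely $(1+\xi)^{-1}$ with the same $\xi$.

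First I would observe that $\phi(n+1)-\phi(n)\to\infty$ implies $\phi(n)\to\infty$ (eventually every increment exceeds $1$, so $\phi$ is eventually strictly increasing and unbounded) and, more quantitatively, that $\phi(n)/n\to\infty$. The latter is a Stolz--Ces\`aro (Ces\`aro-mean) argument: writing $\phi(n)=\phi(1)+\sum_{k=1}^{n-1}\bigl(\phi(k+1)-\phi(k)\bigr)$, the Ces\`aro averages of a sequence diverging to $\infty$ also diverge to $\infty$, hence $\phi(n)/n\to\infty$. Since $n/\log n\to\infty$ as well, we obtain
\[
\frac{\phi(n)}{\log n}=\frac{\phi(n)}{n}\cdot\frac{n}{\log n}\longrightarrow\infty
\quad\text{as } n\to\infty,
\]
so the limit $\gamma$ appearing in Theorem \ref{theorem E phi} exists and equals $\infty$.

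Finally, since $\phi$ is non-decreasing and satisfies $\phi(n)/\log n\to\gamma=\infty$, Theorem \ref{theorem E phi} applies with exactly the same $\xi=\limsup_{n\to\infty}\phi(n+1)/\sum_{k=1}^n\phi(k)$ and yields $\hdim E(\phi)=(1+\xi)^{-1}$; note that $\xi\in[0,\infty]$ is automatically well defined as a $\limsup$, and the conventions $\infty^{-1}=0$ and $0^{-1}=\infty$ cover the extreme cases. I do not expect any genuine obstacle here: the only point requiring care is the elementary passage from the divergence of the increments of $\phi$ to $\phi(n)/\log n\to\infty$, i.e.\ checking that the more restrictive hypothesis of the corollary indeed implies the hypothesis of the theorem in the regime $\gamma=\infty$.
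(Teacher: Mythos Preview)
Your proposal is correct and follows essentially the same approach as the paper: both reduce the corollary to Theorem \ref{theorem E phi} by showing that the hypothesis $\phi(n+1)-\phi(n)\to\infty$ forces $\gamma=\lim_{n\to\infty}\phi(n)/\log n=\infty$. The only cosmetic difference is in how this elementary step is verified: the paper argues directly that for any $B>0$ one eventually has $\phi(n)\geq \phi(M)+(n-M)B$, whence $\phi(n)/\log n\to\infty$, whereas you pass through the intermediate statement $\phi(n)/n\to\infty$ via a Ces\`aro-mean argument before multiplying by $n/\log n$.
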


The second main result is the answer to \ref{Q2} in a more general setting.

\begin{theorem} \label{theorem A kappa}
For each $\kappa \in (-\infty, \infty)$, let
\begin{align} \label{definition of A kappa}
A_\kappa \coloneqq \{ x \in (0,1] : \log d_n(x) \geq \kappa n \text{ for all } n \in \mathbb{N} \}.
\end{align}
Then, for each $\kappa \in (-\infty, \infty)$,
\[
\hdim A_\kappa = 1.
\]
\end{theorem}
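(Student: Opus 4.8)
I would split the argument by the sign of $\kappa$. When $\kappa \le 0$, every Pierce digit satisfies $d_n(x) \ge 1$, hence $\log d_n(x) \ge 0 \ge \kappa n$ for all $n$, so $A_\kappa$ contains every irrational number in $(0,1]$ and therefore $\hdim A_\kappa = 1$ at once. So assume from now on that $\kappa > 0$. Since $\hdim$ is monotone under inclusion and $A_\kappa \subseteq (0,1]$, it is enough to exhibit, for each fixed $s \in (0,1)$, a Cantor-type subset of $A_\kappa$ of Hausdorff dimension at least $s$.

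The plan is to build such a set inside the Pierce cylinder structure, in the same spirit as the $\gamma = \infty$ lower-bound construction behind Theorem~\ref{theorem E phi}, but with the digit windows chosen so that the defining inequality of $A_\kappa$ holds from the very first level. Recall the fundamental-interval length formula $\lvert I_n(c_1,\dots,c_n)\rvert = \bigl(c_1 \cdots c_n (c_n+1)\bigr)^{-1}$, valid for a strictly increasing word $(c_1,\dots,c_n)$, where $I_n(c_1,\dots,c_n) \coloneqq \{x : d_j(x) = c_j,\ 1 \le j \le n\}$. Put $c_\kappa \coloneqq (e^\kappa-1)/2 > 0$ and define integers $M_n$ and widths $L_n$ recursively by
\[
M_1 \coloneqq \lceil e^\kappa\rceil, \qquad L_n \coloneqq \max\{1, \lfloor c_\kappa M_n\rfloor\}, \qquad M_{n+1} \coloneqq \max\{\lceil e^{\kappa(n+1)}\rceil,\ M_n + L_n + 1\}.
\]
Then $M_n \ge e^{\kappa n}$ for every $n$; the blocks $\{M_n, M_n+1, \dots, M_n+L_n\}$ concatenate into genuine strictly increasing words; and, because $(1+e^\kappa)/2 < e^\kappa$ precisely when $\kappa > 0$, an elementary induction yields $\log M_n = \kappa n + O(1)$, hence $\sum_{k=1}^n \log M_k = \tfrac{\kappa}{2} n^2 + O(n)$. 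Set
\[
C \coloneqq \bigl\{ x \in (0,1] : d_n(x) \in \{M_n, M_n+1, \dots, M_n+L_n\}\ \text{for all}\ n \in \mathbb{N} \bigr\}.
\]
Every $x \in C$ has $d_n(x) \ge M_n \ge e^{\kappa n}$, i.e., $\log d_n(x) \ge \kappa n$, for all $n$, so $C \subseteq A_\kappa$; and $C$ is non-empty (indeed uncountable) by the one-to-one correspondence between irrationals in $(0,1]$ and strictly increasing sequences of positive integers (cf.\ Proposition~\ref{strict increasing condition}).

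For $\hdim C \ge s$ I would invoke the mass distribution principle with the natural probability measure $\mu$ on $C$ obtained by dividing the mass of each retained level-$n$ cylinder equally among its $L_{n+1}+1$ retained children. Along any level-$n$ cylinder of $C$, the length formula gives $-\log\lvert I_n\rvert = \sum_{k=1}^n \log M_k + O(n)$ and $-\log\mu(I_n) = \sum_{k=1}^n \log M_k + O(n)$, so both equal $\sum_{k=1}^n\log M_k\bigl(1+o(1)\bigr)$. Given small $r$, pick the unique $n$ with $\lvert I_{n+1}(x)\rvert \le r < \lvert I_n(x)\rvert$; counting how many retained children of $I_n(x)$ meet $B(x,r)$---which is routine because in the Pierce system the sibling cylinders belonging to consecutive digit values are adjacent, so the retained children of a common parent fill one subinterval---gives $\mu(B(x,r)) \le C' r\,\mu(I_n(x))/\lvert I_n(x)\rvert$, and the last ratio is $\exp(O(n))$. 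Since $\log(1/r) > -\log\lvert I_n(x)\rvert \ge \sum_{k=1}^n \log M_k \gtrsim \tfrac{\kappa}{2}n^2$, we get $n = O\bigl(\sqrt{\log(1/r)}\bigr)$, so the error $O(n)$ is $o\bigl(\log(1/r)\bigr)$ and hence $\mu(B(x,r)) \le r^s$ once $r$ is small enough. The mass distribution principle then yields $\hdim A_\kappa \ge \hdim C \ge s$, and letting $s \uparrow 1$ finishes the proof.

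The step I expect to be the genuine obstacle is exactly the one the window construction is designed to bypass: the condition defining $A_\kappa$ is imposed for \emph{all} $n$, not merely asymptotically, so one cannot start from a known full-dimension set such as an $E(\phi)$ or an $A(\alpha)$ and then repair finitely many leading digits---any set defined by a limiting condition unavoidably contains points with small leading digits. Forcing $M_n \ge e^{\kappa n}$ at every level while still keeping the windows as wide as strict monotonicity permits (width comparable to $M_n$, the comparison constant bounded below by $(e^\kappa-1)/2$, which is positive only because $\kappa > 0$) is what simultaneously places $C$ inside $A_\kappa$ and drives the local exponent of $\mu$ up to $1$. Everything else is the standard Pierce-cylinder bookkeeping.
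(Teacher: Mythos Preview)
Your approach is correct and is, at its core, the same strategy as the paper's: for $\kappa>0$ build a Cantor subset of $A_\kappa$ by forcing $d_n(x)$ into an exponentially growing window whose lower edge already dominates $e^{\kappa n}$ at every level, and then show that subset has Hausdorff dimension $1$. The paper carries this out by taking $u_k=2\alpha^k$ with $\alpha>e^{\kappa}$ and invoking the ready-made Lemma~\ref{dimension of  E star}, which immediately yields $\hdim\mathcal{E}^*=(1+\eta)^{-1}=1$ for $\mathcal{E}^*=\{x:2n\alpha^n<d_n(x)\le 2(n+1)\alpha^n\ \text{for all }n\}$; since $\log d_n(x)>\log(2n)+n\log\alpha>\kappa n$ for all $n\ge1$, one has $\mathcal{E}^*\subseteq A_\kappa$. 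Your windows $[M_n,M_n+L_n]$ with $M_n\asymp e^{\kappa n}$ and $L_n\asymp c_\kappa M_n$ play exactly the same role.

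The only substantive difference is packaging: the paper's Lemma~\ref{dimension of  E star} (built on Falconer's gap estimate, Proposition~\ref{gap dimension}) turns the dimension computation into a one-line $\eta$-calculation, whereas you redo the lower bound by the mass distribution principle. Your version is fine, but note that the step you label ``routine'' does not literally give $\mu(B(x,r))\le C' r\,\mu(I_n)/|I_n|$ with a \emph{fixed} constant $C'$: because your window width is a fixed fraction of $M_n$, the retained level-$n$ cylinders have lengths that vary by a factor $(1+c_\kappa)^{n}=e^{O(n)}$, so both the sibling-counting and the ratio $\mu(I_n)/|I_n|$ carry an $e^{O(n)}$ rather than an $O(1)$ error. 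You already absorb this into the overall $O(n)=o(\sqrt{\log(1/r)}\,)^2=o(\log(1/r))$ slack, so the conclusion stands; just be aware that the ``constant'' $C'$ is really $e^{O(n)}$. The paper's narrower windows $(nu_n,(n+1)u_n]$ keep that variation polynomial in $n$, which is why Lemma~\ref{rough dimension of E lemma} closes cleanly.

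Your closing remark about why one cannot simply quote $E(\phi)$ or $A(\alpha)$ and repair finitely many digits is exactly the point, and both your construction and the paper's $\mathcal{E}^*$ address it the same way: by enforcing the lower bound from $n=1$.
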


\begin{remark} \label{remark A k}
Notice that the Pierce expansion version of \ref{Q2} simply deals with a special case of the preceding theorem, in the sense that $\mathbb{N} \subseteq (-\infty, \infty)$. Hence, in answer to the question, each $A_k$, $k \in \mathbb{N}$, has full Hausdorff dimension. (See \cite[Corollary 1]{LW01} for an analogous result for the Engel expansion.)
\end{remark}

The third main result is concerned with the ratio between two consecutive digits. (See \cite[Corollary 2]{LW01} for an analogous result for the Engel expansion.)

\begin{theorem} \label{theorem B alpha}
For each $\alpha \in [-\infty, \infty]$, let
\begin{align*} 
B(\alpha) \coloneqq \left\{ x \in (0,1]: \lim_{n \to \infty} \frac{d_{n+1}(x)}{d_n(x)} = \alpha \right\}.
\end{align*}
Then
\[
\hdim B(\alpha) =
\begin{cases}
1, &\text{if } \alpha \in [1, \infty]; \\
0, &\text{if } \alpha \in [-\infty, 1), \text{ because } B(\alpha) = \varnothing.
\end{cases}
\]
\end{theorem}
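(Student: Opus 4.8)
The plan is to dispose of the easy parts first and then concentrate all effort on the lower bound. The inclusion $B(\alpha)\subseteq(0,1]$ gives $\hdim B(\alpha)\le 1$ for free, and the emptiness of $B(\alpha)$ for $\alpha\in[-\infty,1)$ is also immediate: for irrational $x$ the Pierce digits satisfy $d_n(x)<d_{n+1}(x)$, so every ratio $d_{n+1}(x)/d_n(x)$ is strictly larger than $1$ and a sequence of such ratios cannot have a limit below $1$; since rationals form a null set for Hausdorff dimension, this settles $\alpha<1$. Thus the whole content is the lower bound $\hdim B(\alpha)\ge 1$ for $\alpha\in[1,\infty]$, and I would obtain it by exhibiting inside $B(\alpha)$ a Cantor-type subset of full Hausdorff dimension, in the spirit of the proof of Theorem \ref{theorem E phi} and of the Engel-expansion arguments of Liu and Jun Wu \cite{LW01}.

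For the construction I would fix a deterministic reference sequence of positive integers $(M_n)_{n\in\mathbb{N}}$ growing quickly but with controlled consecutive ratios --- concretely $M_n\coloneqq\lceil\alpha^n\rceil$ when $\alpha\in(1,\infty)$, $M_n\coloneqq\lceil e^{\sqrt n}\rceil$ when $\alpha=1$, and $M_n\coloneqq\lceil e^{n^2}\rceil$ when $\alpha=\infty$ --- so that $M_{n+1}/M_n\to\alpha$ and $M_n\to\infty$ and, with $L_n\coloneqq\sum_{k=1}^n\log M_k$, both $\log M_n$ and $\sum_{k=1}^n\log k$ are $o(L_n)$. (The super-polynomial growth is essential when $\alpha=1$: a polynomial $M_n$ would force, via the admissibility constraint below, windows so thin that $\sum_{k\le n}\log\#J_k$ is only of order $\tfrac12 L_n$, giving dimension $\tfrac12$.) Setting $\varepsilon_n\coloneqq 1/n$ and $J_n\coloneqq\{d\in\mathbb{N}:M_n\le d\le (1+\varepsilon_n)M_n\}$, so that $\#J_n\asymp\varepsilon_n M_n\to\infty$, I would note that $M_{n+1}/M_n-1>\varepsilon_n$ for all large $n$ in each case, fix $n_0$ with $(1+\varepsilon_n)M_n<M_{n+1}$ (hence $\max J_n<\min J_{n+1}$) for $n\ge n_0$, fix an admissible block $d_1<\dots<d_{n_0}$ with $d_{n_0}<\min J_{n_0+1}$, and invoke Proposition \ref{strict increasing condition} to see that every sequence formed by appending $d_k\in J_k$ for $k>n_0$ is the digit sequence of a unique irrational. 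Let $F$ be the collection of these irrationals. For $x\in F$ and $n\ge n_0$ one has
\[
\frac{d_{n+1}(x)}{d_n(x)}\in\Bigl[\,\frac{M_{n+1}}{(1+\varepsilon_n)M_n},\ \frac{(1+\varepsilon_{n+1})M_{n+1}}{M_n}\,\Bigr],
\]
and both endpoints tend to $\alpha$, so $F\subseteq B(\alpha)$.

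The dimension of $F$ I would estimate by the mass distribution principle. Put on $F$ the measure $\mu$ that splits the mass of each surviving rank-$n$ fundamental interval equally among its $\#J_{n+1}$ surviving rank-$(n+1)$ children. Using the length estimate $\lvert I_n(d_1,\dots,d_n)\rvert\asymp d_n^{-1}\prod_{k=1}^n d_k^{-1}$ for Pierce fundamental intervals and $d_k\asymp M_k$ on $F$, one gets
\[
\frac{\log\mu(I_n(x))}{\log\lvert I_n(x)\rvert}=\frac{\sum_{k=n_0+1}^n\log\#J_k+O(1)}{\sum_{k=1}^n\log d_k+\log d_n+O(1)}=\frac{L_n+o(L_n)}{L_n+o(L_n)}\longrightarrow 1,
\]
since $\log\#J_k=\log M_k+O(\log k)$, $\log d_k=\log M_k+O(1)$, and $\log M_n$, $\sum_{k\le n}\log k$ are $o(L_n)$. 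Within a rank-$n$ fundamental interval the surviving rank-$(n+1)$ children are the cylinders $I_{n+1}(d_1,\dots,d_n,d)$ with $d\in J_{n+1}$, which are mutually adjacent and of comparable lengths; a standard application of the mass distribution principle --- in the form already used in the proof of Theorem \ref{theorem E phi} --- then upgrades the cylinder-scale estimate to a lower bound for the local dimension at all radii, yielding $\hdim F\ge 1$. With the trivial upper bound this gives $\hdim B(\alpha)=1$.

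I expect the main obstacle to be exactly that last step, the passage from the clean cylinder-scale ratio to a genuine bound on $\mu(B(x,r))$ for all small $r$: this hinges on how the rank-$(n+1)$ fundamental intervals are distributed --- in position and in length --- inside a rank-$n$ one, and in particular on checking that the strong ``clustering'' of the surviving children (they occupy only a vanishingly small sub-interval of their parent) does not depress the local dimension. The required Pierce metric facts are the same ones invoked for Theorem \ref{theorem E phi}, so in practice I would reuse that machinery; the only genuinely new ingredient is the elementary observation in the displayed inclusion above, namely that letting each admissible digit $d_n$ range over a multiplicative window of width $1+o(1)$ about $M_n$ pins $d_{n+1}(x)/d_n(x)$ to $\alpha$ while inflating the branching only by a factor whose logarithm is $o(L_n)$ --- negligible against the cylinder lengths.
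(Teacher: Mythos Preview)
Your proposal is correct and essentially matches the paper's proof: the same three reference sequences $e^{\sqrt{n}}$, $\alpha^n$, $e^{n^2}$ are used for the cases $\alpha=1$, $\alpha\in(1,\infty)$, $\alpha=\infty$, and in each case a Cantor-type subset of $B(\alpha)$ is built by confining $d_n$ to a multiplicative window of width $1+O(1/n)$ about the reference value. The only difference is packaging: instead of running the mass distribution principle directly, the paper routes the lower bound through Lemma~\ref{dimension of  E star} (and ultimately Proposition~\ref{gap dimension}, Falconer's Example~4.6(a)), which takes as input only the branching numbers $m_k$ and minimal gap widths $\varepsilon_k$ and returns the dimension bound --- thereby absorbing into a black box precisely the cylinder-to-ball step you flag as the main obstacle, so no separate analysis of the clustering of children within a parent is required.
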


\begin{remarks} \phantomsection \label{remark to theorem B alpha}
\begin{enumerate}[label=\upshape(\arabic*), ref=\arabic*, leftmargin=*, widest=2]
\item \label{remark to theorem B alpha 1}
In fact, Corollary \ref{corollary A alpha} can also be regarded as a corollary to Theorem \ref{theorem B alpha} (and not only to Theorem \ref{theorem E phi}), since $B(\alpha) \subseteq A(\alpha)$ for each $\alpha \in [1, \infty]$. See Remarks \ref{remark to the proof of theorem B alpha}(\ref{remark to the proof of theorem B alpha 1}) for the details.

\item \label{remark to theorem B alpha 2}
In \cite[Corollary 2]{LW01}, the authors considered the set
\[
\left\{ x \in (0,1] : \lim_{n \to \infty} \frac{\delta_{n+1}(x)}{\delta_n(x)-1} = \alpha \right\}
\]
and asserted that it has Hausdorff dimension $1$ if $\alpha \in [1, \infty)$. For each $\alpha \in [-\infty, \infty]$, let
\[
B'(\alpha) \coloneqq \left\{ x \in (0,1] : \lim_{n \to \infty} \frac{d_{n+1}(x)}{d_n(x)-1} = \alpha \right\}.
\]
For any irrational $x \in (0,1]$, we have $d_n(x) \geq n$ for each $n \in \mathbb{N}$ by Proposition \ref{digit condition lemma}(\ref{digit condition lemma 2}), so $d_n(x) \to \infty$ as $n \to \infty$. Thus, it is clear that $B(\alpha) = B'(\alpha)$ for any $\alpha \in [-\infty, \infty]$, and hence $\hdim B(\alpha) = \hdim B'(\alpha)$.
\end{enumerate}
\end{remarks}

The following corollary is a more general statement than the answer required for \ref{Q3}. 

\begin{corollary} \label{corollary B kappa} 
For each $\kappa \in (-\infty, \infty)$, let
\[
B_\kappa \coloneqq \left\{ x \in (0,1] : \frac{d_{n+1}(x)}{d_n(x)} \leq \kappa \text{ for all } n \in \mathbb{N} \right\}.
\]
Then
\[
\hdim B_\kappa = 
\begin{cases}
1, &\text{if } \kappa \in (1, \infty); \\
0, &\text{if } \kappa \in (-\infty, 1], \text{ because } B_\kappa = \varnothing.
\end{cases}
\]
\end{corollary}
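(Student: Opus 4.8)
The plan is to split on the value of $\kappa$, with essentially all the content in the range $\kappa \in (1,\infty)$, which I would deduce directly from Theorem \ref{theorem B alpha} together with the bi-Lipschitz behaviour of the Pierce shift map $T$ on cylinders. For $\kappa \in (-\infty,1]$ I would simply observe that $B_\kappa = \varnothing$: membership in $B_\kappa$ forces $d_n(x)$ to be defined for every $n \in \mathbb{N}$, hence $x$ to be irrational, and then the digit sequence is strictly increasing with $d_n(x) \ge 1$, so $d_{n+1}(x)/d_n(x) > 1 \ge \kappa$, contradicting the defining inequality.

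Now fix $\kappa \in (1,\infty)$; only the lower bound $\hdim B_\kappa \ge 1$ needs proof. I would choose $\alpha \in (1,\kappa)$, so that $\hdim B(\alpha) = 1$ by Theorem \ref{theorem B alpha}, and for $N \in \mathbb{N}$ set
\[
B(\alpha, N) \coloneqq \left\{ x \in B(\alpha) : \frac{d_{n+1}(x)}{d_n(x)} \le \kappa \text{ for all } n \ge N \right\}.
\]
Since $d_{n+1}(x)/d_n(x) \to \alpha < \kappa$ for every $x \in B(\alpha)$, we have $B(\alpha) = \bigcup_{N \in \mathbb{N}} B(\alpha, N)$, and countable stability of Hausdorff dimension then yields, for each $\varepsilon > 0$, an $N$ with $\hdim B(\alpha, N) > 1 - \varepsilon$. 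The key point is that $T^{N-1}(B(\alpha, N)) \subseteq B_\kappa$: if $x \in B(\alpha, N)$ then $z \coloneqq T^{N-1}(x)$ is a well-defined irrational in $(0,1]$ with $d_k(z) = d_{k+N-1}(x)$ for all $k \in \mathbb{N}$, whence $d_{k+1}(z)/d_k(z) = d_{k+N}(x)/d_{k+N-1}(x) \le \kappa$ for every $k \in \mathbb{N}$, because $k+N-1 \ge N$.

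To conclude I would verify that $T^{N-1}$ does not lower the Hausdorff dimension on $B(\alpha, N)$. Writing $B(\alpha, N)$ as the countable union of its intersections with the rank-$(N-1)$ cylinders $I(m_1,\dots,m_{N-1}) = \{x \in (0,1] : d_j(x) = m_j \text{ for } 1 \le j \le N-1\}$, on each of which $T^{N-1}$ is affine with constant nonzero slope $(-1)^{N-1} m_1 \cdots m_{N-1}$ and hence bi-Lipschitz, one obtains $\hdim T^{N-1}(B(\alpha,N)) = \hdim B(\alpha, N) > 1 - \varepsilon$. Combined with $T^{N-1}(B(\alpha,N)) \subseteq B_\kappa$ and the trivial bound $\hdim B_\kappa \le 1$, letting $\varepsilon \to 0$ gives $\hdim B_\kappa = 1$.

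I do not expect a genuine obstacle here once Theorem \ref{theorem B alpha} is in hand: the only delicate issue is reconciling the ``for all $n$'' clause in the definition of $B_\kappa$ with the purely asymptotic condition defining $B(\alpha)$, and this is exactly what the shift $T^{N-1}$, the countable-stability argument, and the cylinder decomposition are there to bridge. The single routine ingredient is the standard fact that $T^{N-1}$ restricted to a rank-$(N-1)$ Pierce cylinder is affine with nonzero slope.
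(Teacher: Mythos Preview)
Your proof is correct and follows essentially the same strategy as the paper: invoke Theorem~\ref{theorem B alpha} to obtain a set $B(\alpha)$ of full dimension, use countable stability to find an almost-full-dimension subset on which the ratio bound holds for all $n \ge N$, and then use a bi-Lipschitz shift to pass from ``for all $n \ge N$'' to ``for all $n \ge 1$''. The only cosmetic difference is that the paper packages the last step as Lemma~\ref{preservation of dimension} (built on Lemma~\ref{shifting of digits}, which adds initial digits via the inverse maps $g_\sigma$), whereas you do it directly with the forward Pierce map $T^{N-1}$ removing initial digits; these are dual formulations of the same affine-on-cylinders observation.
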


\begin{remarks} \label{remark B k}
\begin{enumerate}[label=\upshape(\arabic*), ref=\arabic*, leftmargin=*, widest=2]
\item \label{remark B k 1}
The answer to \ref{Q3} in the context of Pierce expansions is immediate from the preceding corollary: each $B_k$, $k \in \mathbb{N} \setminus \{ 1 \}$, has Hausdorff dimension $1$, while $B_1$, being empty, has Hausdorff dimension $0$. (See \cite[Corollary 2.4]{WW07} for an analogous result for the Engel expansion.)

\item \label{remark B k 2}
The set $B_2$ is an exceptional set to the LLN, and this establishes a connection between the Pierce expansion version of \ref{Q3} and the LLN. Moreover, Corollary \ref{corollary B kappa} implies Corollary \ref{corollary LLN exceptional set} and solves \ref{Q1}. See Remark \ref{remark to proof of corollary B kappa} for the details.
\end{enumerate}
\end{remarks}

The fourth main result is concerned with the ratio between the logarithms of two consecutive digits. Compared to Theorem \ref{theorem B alpha}, the following theorem helps in understanding the numbers whose digits have a lot faster, e.g., doubly exponential growth rates. (See \cite[Theorem 1.1]{She10} and  \cite[Corollary 2.4]{WW06} for analogous results for the Engel expansion.)

\begin{theorem} \label{theorem ratio of logarithms}
For each $\alpha \in [-\infty, \infty]$, let
\begin{align} \label{definition of F alpha}
F(\alpha) \coloneqq \left\{ x \in (0,1] : \lim_{n \to \infty} \frac{\log d_{n+1}(x)}{\log d_n(x)} = \alpha \right\}.
\end{align}
Then
\[
\hdim F(\alpha) =
\begin{cases}
\alpha^{-1}, &\text{if } \alpha \in [1, \infty]; \\
0, &\text{if } \alpha \in [-\infty, 1), \text{ because } F(\alpha) = \varnothing,
\end{cases}
\]
with the convention $\infty^{-1} = 0$.
\end{theorem}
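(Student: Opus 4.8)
\emph{Strategy.} The statement splits into cases according to $\alpha$. Two of them are immediate. For any irrational $x$ the digits strictly increase and $d_n(x)\ge n$ by Proposition~\ref{digit condition lemma}(\ref{digit condition lemma 2}), so $\log d_{n+1}(x)>\log d_n(x)>0$ for $n\ge 2$; hence $\log d_{n+1}(x)/\log d_n(x)>1$ for all $n\ge 2$, any limit is $\ge 1$, and $F(\alpha)=\varnothing$ when $\alpha\in[-\infty,1)$. When $\alpha=1$, the LLN \eqref{law of large numbers} yields $\log d_n(x)=n(1+o(1))$ for Lebesgue-almost every $x$, whence $\log d_{n+1}(x)/\log d_n(x)\to 1$ a.e.; thus $F(1)$ has full Lebesgue measure and $\hdim F(1)=1$. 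For $\alpha=\infty$ the lower bound is vacuous, so it remains to prove $\hdim F(\infty)=0$ and, for $\alpha\in(1,\infty)$, both inequalities $\hdim F(\alpha)\le\alpha^{-1}$ and $\hdim F(\alpha)\ge\alpha^{-1}$.

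\emph{Upper bound, $\alpha\in(1,\infty]$.} Fix $\beta$ with $1<\beta<\alpha$ (any $\beta>1$ if $\alpha=\infty$). If $x\in F(\alpha)$ then $\log d_{n+1}(x)/\log d_n(x)>\beta$ for all large $n$, so $F(\alpha)\subseteq\bigcup_{N\in\mathbb{N}}G_N$, where $G_N\coloneqq\{x\in(0,1]:d_{n+1}(x)>d_n(x)^\beta\text{ for all }n\ge N\}$; by countable stability of Hausdorff dimension it suffices to bound $\hdim G_N$ for each $N$. For $m>N$, cover $G_N$ by the intervals $\widetilde I_m(a_1,\dots,a_m)\coloneqq\{x\in I_m(a_1,\dots,a_m):d_{m+1}(x)>a_m^\beta\}$ over admissible strings with $a_{k+1}>a_k^\beta$ for $N\le k<m$; each has length at most $(a_1\cdots a_{m-1}\,a_m^{1+\beta})^{-1}$, and since $a_m\ge 2^{\beta^{\,m-N-1}}$ on such strings the diameters tend to $0$ uniformly as $m\to\infty$. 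Evaluating $\sum_{\text{strings}}|\widetilde I_m|^s$ by summing over $a_m,a_{m-1},\dots$ in turn, the exponent $e_j$ of $a_j$ obeys the affine recursion $e_{j-1}=\beta e_j+(s-\beta)$ as long as the growth constraint is active, with fixed point $e^*=(\beta-s)/(\beta-1)$, which exceeds $1$ whenever $s<1$; from $e_m=(1+\beta)s$ one checks that $e_m>e^*$ precisely when $s>1/\beta$, and then $e_j\to\infty$ as $j$ decreases, so every partial sum converges and, because the resulting tail-sum constants $e_j/(e_j-1)$ have summable logarithms, $\sum_{\text{strings}}|\widetilde I_m|^s$ stays bounded in $m$. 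Hence $\mathcal{H}^s(G_N)<\infty$ for every $s\in(1/\beta,1)$, so $\hdim G_N\le 1/\beta$; letting $\beta\uparrow\alpha$ gives $\hdim F(\alpha)\le\alpha^{-1}$, which is $0$ when $\alpha=\infty$.

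\emph{Lower bound, $\alpha\in(1,\infty)$.} Let $b_0$ be a large integer (say $b_0>2^{1/(\alpha-1)}$) and set $b_n\coloneqq\lceil b_{n-1}^{\,\alpha}\rceil$, so $b_n\ge 2b_{n-1}$ and $\log b_n/\log b_{n-1}\to\alpha$. Let $K$ consist of the irrationals $x$ with $d_n(x)\in[b_{n-1},2b_{n-1})$ for every $n\in\mathbb{N}$; then $\log d_{n+1}(x)/\log d_n(x)\to\alpha$, so $K\subseteq F(\alpha)$. Put on $K$ the probability measure $\mu$ that distributes mass uniformly among the roughly $b_{n-1}$ surviving level-$n$ subcylinders of each surviving level-$(n-1)$ cylinder, so $\mu(I_n(a_1,\dots,a_n))\asymp(b_0\cdots b_{n-1})^{-1}$ on $K$. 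The heart of the argument is a scale-by-scale bound on $\mu(B(x,r))$, $x\in K$: inside a surviving level-$n$ cylinder $I_n$, of length $\asymp(a_1\cdots a_n\,a_n)^{-1}$, the surviving level-$(n+1)$ subcylinders occupy a single sub-interval of length only $\asymp(a_1\cdots a_n\,b_n)^{-1}$, flanked by much longer gaps, and inside that sub-interval they tile contiguously into $\asymp b_n$ pieces of length $\asymp(a_1\cdots a_n\,b_n^{2})^{-1}$. Comparing $r$ with these three scales gives, for $x\in K$ and small $r$, a bound of the form $\mu(B(x,r))\lesssim \bigl(r/\lambda(I_{n+1})\bigr)\mu(I_{n+1})$ or $\mu(B(x,r))\lesssim\mu(I_n)$ according to the range of $r$; in each case, using $a_k\asymp b_{k-1}$ and the identity $\sum_{k=0}^{n}\log b_k\sim\frac{\alpha}{\alpha-1}\log b_n$, the inequality $\mu(B(x,r))\le Cr^s$ reduces to the single condition $s\alpha\le 1$. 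Thus $\mu(B(x,r))\le Cr^s$ for every $s<\alpha^{-1}$, and the mass distribution principle gives $\hdim F(\alpha)\ge\hdim K\ge\alpha^{-1}$. Together with the upper bound this proves $\hdim F(\alpha)=\alpha^{-1}$.

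\emph{Main obstacle.} The genuine difficulty is the lower-bound ball estimate. Since consecutive Pierce cylinders share an endpoint rather than being separated, the naive heuristic $\hdim K\approx\liminf_n\log\mu(I_n)/\log\diam I_n$ would give $\alpha/(2\alpha-1)$, strictly larger than $\alpha^{-1}$ for $\alpha>1$; the correct value only comes out by testing $\mu(B(x,r))$ at the intermediate scales between successive cylinder generations and using that the surviving subcylinders of a cylinder fill only a small, well-separated portion of it. A subsidiary technical point, in the upper bound, is keeping the exponent recursion above its fixed point so that the covering sums remain bounded in $m$; this is exactly what fixes the threshold at $s=1/\beta$.
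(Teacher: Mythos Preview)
Your proof is correct. The cases $\alpha<1$, $\alpha=1$, and the upper bound for $\alpha\in(1,\infty]$ match the paper closely: the paper too covers $F(\alpha)$ (after an $\varepsilon$-relaxation and an appeal to Lemma~\ref{preservation of dimension}) by the sets you call $G_N$ and bounds the $s$-dimensional Hausdorff sum over admissible strings. Where you run the exponent recursion $e_{j-1}=\beta e_j+(s-\beta)$ and check that it escapes its fixed point precisely when $s>1/\beta$, the paper instead bounds each one-step sum $\sum_{j>\sigma_{n-1}^{\beta}}(\sigma_{n-1}^{\beta}/j^{1+\beta})^s$ by a fixed constant $A$ for small $n$ and by $1$ for large $n$ (inequalities \eqref{less than A}, \eqref{less than one}), which sidesteps tracking the recursion constants. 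One point your sketch elides is the summation over the \emph{free} initial digits $a_1,\dots,a_N$; the paper handles this by fixing the first two digits and using countable stability over $(\tau_1,\tau_2)\in\Sigma_2$, a device equally available to you.

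The lower bound for $\alpha\in(1,\infty)$ is where you genuinely diverge. The paper does not build a Cantor set by hand: it notes that with $\phi(n)\coloneqq\alpha^n$ one has $E(\phi)\subseteq F(\alpha)$, computes $\xi=\alpha-1$, and reads off $\hdim E(\phi)=(1+\xi)^{-1}=\alpha^{-1}$ from Theorem~\ref{theorem E phi} (Lemma~\ref{F alpha lower bound}); the underlying engine is Falconer's gap lemma (Proposition~\ref{gap dimension}) through Lemma~\ref{dimension of E star}. Your mass-distribution route is more self-contained, and you correctly identify the crux: the binding constraint on $s$ comes not from the cylinder scale $r\asymp\operatorname{diam} I_n$ (which would naively give $\alpha/(2\alpha-1)$) but from the intermediate scale $r\asymp(a_1\cdots a_n\,b_n)^{-1}$, at which $B(x,r)$ already swallows the entire surviving level-$(n+1)$ block inside $I_n$ while $r$ is still much smaller than $\operatorname{diam} I_n$; that scale forces $s\le\alpha^{-1}$. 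Both approaches succeed; the paper's is shorter because it recycles Theorem~\ref{theorem E phi}, while yours makes the geometric mechanism behind the exponent $\alpha^{-1}$ explicit.
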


\begin{remarks}
\begin{enumerate}[label=\upshape(\arabic*), ref=\arabic*, leftmargin=*, widest=2]
\item
For any $x \in ( {2}^{-1}, 1 ]$, we have $d_1(x) = 1$, i.e., $\log d_1(x) = 0$, by definition of $d_1(x)$ in \eqref{Pierce algorithm 1}. But, for any irrational $x \in (0,1]$, we have $d_n(x) \geq n$ for all $n \in \mathbb{N}$ by Proposition \ref{digit condition lemma}(\ref{digit condition lemma 2}), so $\log d_n (x) > 0$ for all $n \in \mathbb{N} \setminus \{ 1 \}$. Thus, for any irrational $x \in (0,1]$, the quotient $\log d_{n+1}(x)/\log d_n(x)$ in \eqref{definition of F alpha} is finite for all integers $n \geq 2$.

\item
Although $F(\infty)$ is of null Hausdorff dimension, it is not empty. For instance, consider $x \in (0,1]$ with digits $d_n(x) = 2^{g(n)}$ where $g(n) \coloneqq 2^{2^n}$ for each $n \in \mathbb{N}$, whose existence is guaranteed by Proposition \ref{strict increasing condition}. Then $\log d_{n+1}(x) / \log d_n(x) = g(n) \to \infty$ as $n \to \infty$, and thus $x \in F(\infty)$.
\end{enumerate}
\end{remarks}

The fifth main result is concerned with the sets arising from the CLT and the LIL.

\begin{theorem} \label{theorem C psi beta}
Let $\psi \colon \mathbb{N} \to (0, \infty)$ be a function such that 
\begin{enumerate}[label=\upshape(\roman*), ref=\roman*, leftmargin=*, widest=ii]
\item \label{theorem C psi beta 1}
$\psi (n+1) - \psi (n) \to 0$ as $n \to \infty$,
\item \label{theorem C psi beta 2}
$e^{pn} \psi (n) \to \infty$ as $n \to \infty$ for any positive real number $p$.
\end{enumerate} 
For each $\beta \in [-\infty, \infty]$, let
\begin{align} \label{definition of C psi beta}
C(\psi, \beta) \coloneqq \left\{ x \in (0,1] : \lim_{n \to \infty} \frac{\log d_n(x) - n}{\psi (n)} = \beta \right\}.
\end{align}
Then, for each $\beta \in [-\infty, \infty]$,
\[
\hdim C(\psi, \beta) = 1.
\]
\end{theorem}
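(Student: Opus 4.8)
The plan is to realize $C(\psi,\beta)$ as a superset of an explicitly built Cantor-type set on which the rank-$n$ digit is confined to a prescribed block $D_n$ of consecutive integers, and then to compute the dimension of that set by the Pierce cylinder-length estimates and the mass-distribution lower bound already developed in the proofs of Theorems~\ref{theorem A kappa} and~\ref{theorem B alpha}. If $D_1,D_2,\dots$ are finite blocks of consecutive integers with $\max D_n<\min D_{n+1}$, then by Proposition~\ref{strict increasing condition} every sequence $(c_n)$ with $c_n\in D_n$ is the Pierce digit sequence of some irrational $x$, so $E:=\{x\in(0,1]:d_n(x)\in D_n\ \text{for all }n\}$ is a Cantor set with
\[
\hdim E=\liminf_{n\to\infty}\frac{\sum_{k=1}^{n}\log\#D_k}{\sum_{k=1}^{n}\log\max D_k},
\]
the lower-order terms of size $O(\log\max D_n)$ from the exact cylinder length being absorbed into the denominator (which is of order $\sum_{k\le n}\log\max D_k$).

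For finite $\beta$, put $L_n:=n+\beta\psi(n)$ and let $D_n$ be the block of integers $c$ with $\bigl|(\log c-n)/\psi(n)-\beta\bigr|\le\eta_n$, where $\eta_n:=\bigl(n\max(1,\psi(n))\bigr)^{-1}\to0$. Hypothesis~(i) yields $\psi(n)=o(n)$ (a Ces\`{a}ro averaging argument), so $L_n=n+o(n)$ — in particular $L_n>\log n$ eventually, so each $D_n$ is a nonempty set of legitimate digit values — and $\log(\min D_{n+1}/\max D_n)=1+\beta\bigl(\psi(n+1)-\psi(n)\bigr)-\eta_n\psi(n)-\eta_{n+1}\psi(n+1)\to1$, so the blocks are strictly stacked from some rank on (the finitely many earlier digits are fixed arbitrarily, which does not affect $\hdim$). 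Hypothesis~(ii) yields $\log\psi(n)=o(n)$, whence $\log\#D_n=L_n+\log\bigl(2\eta_n\psi(n)\bigr)+O(1)=n+o(n)$, while $\log\max D_n=n+o(n)$ as well; plugging into the displayed formula gives $\hdim E=1$. Since every $x\in E$ satisfies $\bigl|(\log d_n(x)-n)/\psi(n)-\beta\bigr|\le\eta_n\to0$, we have $E\subseteq C(\psi,\beta)$, hence $\hdim C(\psi,\beta)=1$.

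The cases $\beta=\pm\infty$ I would deduce from the finite case via an auxiliary rate. One can build $\widetilde\psi\colon\mathbb N\to(0,\infty)$ that still satisfies~(i) and~(ii) but with $\widetilde\psi(n)/\psi(n)\to\infty$: the function $\psi_{\max}(n):=\max_{k\le n}\psi(k)$ is non-decreasing, is $o(n)$, and has increments tending to $0$, so a block-wise almost-linear interpolation whose slopes tend to $0$ (using $\psi_{\max}(n)/n\to0$) and which on its $j$-th block stays above $j\,\psi_{\max}$ works, with~(ii) automatic because $\widetilde\psi\ge\psi(1)>0$. Then $\lim_n(\log d_n(x)-n)/\widetilde\psi(n)=\pm1$ forces $\log d_n(x)-n=\pm\bigl(1+o(1)\bigr)\widetilde\psi(n)$, hence $(\log d_n(x)-n)/\psi(n)=\pm\bigl(1+o(1)\bigr)\widetilde\psi(n)/\psi(n)\to\pm\infty$, so $C(\widetilde\psi,1)\subseteq C(\psi,+\infty)$ and $C(\widetilde\psi,-1)\subseteq C(\psi,-\infty)$; the finite case gives $\hdim C(\widetilde\psi,\pm1)=1$, so $\hdim C(\psi,\pm\infty)=1$.

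I expect the main obstacle to be the bookkeeping in these constructions, and it is exactly there that the hypotheses are used. Because Pierce digits are strictly increasing, the blocks $D_n$ must sit essentially one above another, which caps $\log\#D_n$ at roughly $\log\max D_n$; hypothesis~(i) is precisely what keeps consecutive blocks from colliding and keeps $\log\max D_n\approx n$, and hypothesis~(ii) is precisely what keeps each block as fat as $\log\#D_n=(1+o(1))n$, so that the ratio in the dimension formula can rise all the way to $1$. Arranging simultaneously that each $D_n$ is nonempty, that the stacking persists, and that the prescribed limit holds — and, for $\beta=\pm\infty$, producing the faster rate $\widetilde\psi$ without violating~(i) or~(ii) — is the delicate part.
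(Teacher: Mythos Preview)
For finite $\beta$ your argument is essentially the paper's: both build a Cantor set $\{x:d_n(x)\in D_n\ \forall n\}$ with $\log\#D_n\sim n\sim\log\max D_n$ and read off dimension $1$ from the general estimate of Lemma~\ref{rough dimension of E lemma}. The paper's block is $D_n=\bigl(e^{f(n)},(1+\psi(n)/n)e^{f(n)}\bigr]$ with $f(n)=n+\beta\psi(n)$ (packaged as Lemma~\ref{dimension of E check}); your choice $|\log c-f(n)|\le\eta_n\psi(n)$ is a cosmetic variant. One caution: your stated reason for $D_n\ne\varnothing$ (``$L_n>\log n$'') only places the \emph{centre} above $n$; you still need the interval to have length $\ge1$, i.e.\ $e^{L_n}\eta_n\psi(n)\to\infty$, and that is precisely where hypothesis~(ii) is used (compare observation~(\ref{observation 1}) in the proof of Lemma~\ref{dimension of E check}).

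For $\beta=\pm\infty$ the paper takes a much shorter route than your auxiliary-rate construction: since $\psi(n)=o(n)$, any $x$ with $(d_n(x))^{1/n}\to e^{2}$ (resp.\ $e^{1/2}$) satisfies $(\log d_n(x)-n)/\psi(n)\to+\infty$ (resp.\ $-\infty$), so $A(e^{2})\subseteq C(\psi,+\infty)$ and $A(e^{1/2})\subseteq C(\psi,-\infty)$, and both supersets have full dimension by Corollary~\ref{corollary A alpha}. Your approach---manufacturing $\widetilde\psi$ with $\widetilde\psi/\psi\to\infty$ while preserving (i) and (ii), then invoking the finite-$\beta$ case for $\widetilde\psi$---is valid in principle and has the merit of being self-contained, but the block-wise interpolation you sketch needs real care: keeping the slopes $\to0$ caps how fast $\widetilde\psi$ can climb, while the requirement $\widetilde\psi\ge j\,\psi_{\max}$ on block $j$ forces it to climb, and reconciling the two requires an inductive choice of block endpoints exploiting $\psi_{\max}(n)/n\to0$. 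It can be made to work, but the paper's two-line reduction to $A(\alpha)$ is considerably cleaner.
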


The CLT has nothing to do with the pointwise limit of the quotient $(\log d_n(x) - n)/{\sqrt{n}}$, but due to the theorem, a reasonable expected growth rate of $\log d_n(x) - n$ is $n^{1/2}$. The following theorem shows that there exist points with different growth rates. In fact, for any real exponent $\alpha$, including $1/2$, there are plenty of points in the Hausdorff dimension sense sharing a common pointwise limit. (See \cite[Theorems 1.1 and 1.2 and Corollary 2.3]{LW03} for analogous results for the Engel expansion.)

\begin{theorem} \label{theorem E alpha beta}
For each $\alpha \in (-\infty, \infty)$ and $\beta \in [-\infty, \infty]$, let
\begin{align*} 
E(\alpha, \beta) \coloneqq \left\{ x \in (0,1] : \lim_{n \to \infty} \frac{\log d_n(x) - n}{n^\alpha} = \beta \right\}.
\end{align*}
Then
\[
\hdim E(\alpha, \beta) = 
\begin{cases}
1, &\text{if } \begin{cases} \alpha \in (-\infty, 1); \\ \alpha = 1 \text{ and } \beta \in [-1,\infty]; \\ \alpha \in (1, \infty) \text{ and } \beta \in [0, \infty]; \end{cases}  \\
0, &\text{otherwise} , \text{ because } E(\alpha, \beta) = \varnothing.
\end{cases}
\]
\end{theorem}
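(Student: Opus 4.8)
The plan is to derive Theorem~\ref{theorem E alpha beta} from the two already-established results Theorem~\ref{theorem C psi beta} and Theorem~\ref{theorem E phi}, using as the only extra input the elementary bound $d_n(x) \geq n$, valid for every irrational $x \in (0,1]$ and every $n \in \mathbb{N}$ (Proposition~\ref{digit condition lemma}(\ref{digit condition lemma 2})). Since $E(\alpha,\beta) \subseteq (0,1]$ the estimate $\hdim E(\alpha,\beta) \leq 1$ is automatic, so only the matching lower bound and the emptiness assertions require work.

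First I would settle the vanishing cases. If $\alpha = 1$ and $\beta \in [-\infty, -1)$, then for every irrational $x$ one has $(\log d_n(x) - n)/n \geq (\log n - n)/n = (\log n)/n - 1 \to -1$, so $\liminf_n (\log d_n(x) - n)/n \geq -1$ and the limit defining $E(1,\beta)$ cannot equal $\beta$; thus $E(1,\beta)$ contains no irrational and has Hausdorff dimension $0$. If $\alpha > 1$ and $\beta \in [-\infty, 0)$, then a convergence $(\log d_n(x) - n)/n^\alpha \to \beta$ would force $\log d_n(x) < n + \tfrac{1}{2}\beta n^\alpha$ for all large $n$ (or $\log d_n(x) < n - n^\alpha$ when $\beta = -\infty$), and since $\alpha > 1$ the right-hand side tends to $-\infty$, contradicting $\log d_n(x) \geq \log n \geq 0$; again $E(\alpha,\beta)$ is at most countable.

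For the full-dimension cases I would split on $\alpha$. When $\alpha \in (-\infty, 1)$, set $\psi(n) \coloneqq n^\alpha$: this $\psi$ is positive, $\psi(n+1) - \psi(n) \to 0$ (by the mean value theorem if $\alpha \in [0,1)$, trivially if $\alpha \leq 0$), and $e^{pn}\psi(n) \to \infty$ for every $p > 0$, so $\psi$ meets the hypotheses of Theorem~\ref{theorem C psi beta}; as $E(\alpha,\beta) = C(\psi,\beta)$ verbatim, that theorem gives $\hdim E(\alpha,\beta) = 1$ for every $\beta \in [-\infty,\infty]$. When $\alpha \geq 1$ and $\beta$ lies in the non-vanishing range, I would in each case exhibit a non-decreasing $\phi \colon \mathbb{N} \to (0,\infty)$ with $\phi(n)/\log n \to \infty$ and $\xi \coloneqq \limsup_n \phi(n+1)\big/\sum_{k=1}^n \phi(k) = 0$, so that Theorem~\ref{theorem E phi} yields $\hdim E(\phi) = (1+\xi)^{-1} = 1$, together with an inclusion $E(\phi) \subseteq E(\alpha,\beta)$. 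For $\alpha = 1$: take $\phi(n) = (1+\beta)n$ when $\beta \in (-1,\infty)$ (here in fact $E(\phi) = E(1,\beta)$ since $1+\beta > 0$), $\phi(n) = \sqrt{n}$ when $\beta = -1$, and $\phi(n) = n^{3/2}$ when $\beta = \infty$. For $\alpha > 1$: take $\phi(n) = \beta n^\alpha$ when $\beta \in (0,\infty)$ (here $E(\phi) = E(\alpha,\beta)$, after absorbing the lower-order term $n = o(n^\alpha)$), $\phi(n) = n$ when $\beta = 0$, and $\phi(n) = n^{\alpha+1}$ when $\beta = \infty$. In every instance the required inclusion is a one-line asymptotic computation, and $\xi = 0$ follows from $\sum_{k=1}^n k^c \sim n^{c+1}/(c+1)$.

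There is no genuinely hard step here: the work lies entirely in getting the case split right---in particular in tracking exactly which endpoint values of $\beta$ give the empty set and which give full dimension---and in checking that each auxiliary function $\psi$ or $\phi$ satisfies the precise hypotheses of the theorem into which it is fed. The most tedious, though wholly routine, part is the evaluation of $\xi$ for the various choices of $\phi$.
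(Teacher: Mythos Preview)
Your proposal is correct and follows essentially the same route as the paper: the case $\alpha<1$ is handled via Theorem~\ref{theorem C psi beta} with $\psi(n)=n^\alpha$, the remaining full-dimension cases via Theorem~\ref{theorem E phi} with suitable polynomial-type $\phi$, and the emptiness claims via the elementary bound $d_n(x)\geq n$. The only cosmetic differences are that the paper routes the $\alpha=1$ case through Corollary~\ref{corollary A alpha} (itself proved from Theorem~\ref{theorem E phi}) rather than invoking Theorem~\ref{theorem E phi} directly, and for $\alpha>1$ with $\beta\in\{0,\infty\}$ the paper obtains the inclusion from an already-handled $E(\alpha',\beta')$ with nearby $\alpha'$ rather than choosing a fresh $\phi$; your direct choices $\phi(n)=n$ and $\phi(n)=n^{\alpha+1}$ work equally well.
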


The following result concerning the sets originating from the LIL is a direct consequence of Theorem \ref{theorem C psi beta}. (See \cite[Theorem 2.4]{LW03} for an analogous result for the Engel expansion.)

\begin{corollary} \label{corollary L beta}
For each $\beta \in [-\infty, \infty]$, let
\begin{align} \label{definition of L beta}
L(\beta) \coloneqq \left\{ x \in (0,1] : \lim_{n \to \infty} \frac{\log d_n(x) - n}{\sqrt{2n \log \log n}} = \beta \right\}.
\end{align}
Then, for each $\beta \in [-\infty, \infty]$
\[
\hdim L(\beta) = 1.
\]
\end{corollary}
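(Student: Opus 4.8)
The plan is to obtain Corollary \ref{corollary L beta} as an immediate consequence of Theorem \ref{theorem C psi beta}, applied with $\psi (n) \coloneqq \sqrt{2n \log \log n}$. A minor technical point is that $\log \log n$ is positive only for $n \geq 3$, so I would first set $\psi (n) \coloneqq \sqrt{2n \log \log n}$ for every integer $n \geq 3$ and extend $\psi$ to $\{ 1, 2 \}$ by arbitrary positive values, say $\psi (1) \coloneqq \psi (2) \coloneqq \psi (3)$, so that $\psi \colon \mathbb{N} \to (0, \infty)$. Since the limit defining $L(\beta)$ in \eqref{definition of L beta} depends only on the tail of the sequence $(\psi (n))_{n \in \mathbb{N}}$, altering $\psi$ at the two indices $1$ and $2$ changes nothing, and hence $L(\beta) = C(\psi, \beta)$ for every $\beta \in [-\infty, \infty]$.

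It then remains to verify that this $\psi$ satisfies the two hypotheses of Theorem \ref{theorem C psi beta}. Hypothesis \eqref{theorem C psi beta 2}, namely $e^{pn} \psi (n) \to \infty$ for every real $p > 0$, is immediate, since $\psi (n) \to \infty$ and $e^{pn} \geq 1$. For hypothesis \eqref{theorem C psi beta 1}, namely $\psi (n+1) - \psi (n) \to 0$, I would write, using $a - b = (a^2 - b^2)/(a+b)$,
\[
\psi (n+1) - \psi (n) = \frac{\psi (n+1)^2 - \psi (n)^2}{\psi (n+1) + \psi (n)} = \frac{2 \log \log (n+1) + 2n \bigl( \log \log (n+1) - \log \log n \bigr)}{\psi (n+1) + \psi (n)} .
\]
By the mean value theorem applied to $t \mapsto \log \log t$, whose derivative $1/(t \log t)$ is decreasing, one has $0 < \log \log (n+1) - \log \log n \leq 1/(n \log n)$, so the second summand in the numerator is $O(1/\log n)$ and the numerator as a whole is $O(\log \log n)$, while the denominator is of order $\sqrt{n \log \log n}$. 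Hence $\psi (n+1) - \psi (n) = O\bigl( \sqrt{(\log \log n)/n} \bigr) \to 0$, which is \eqref{theorem C psi beta 1}.

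With both hypotheses checked, Theorem \ref{theorem C psi beta} gives $\hdim C(\psi, \beta) = 1$ for every $\beta \in [-\infty, \infty]$, and the identity $L(\beta) = C(\psi, \beta)$ completes the argument; it is worth noting the contrast with the LIL \eqref{law of the iterated logarithm}, which forces the quotient in \eqref{definition of L beta} to oscillate between $1$ and $-1$, so that its limit fails to exist for Lebesgue-almost every $x$, yet every level set $L(\beta)$ nevertheless has full Hausdorff dimension. There is no genuine obstacle here: the substance of the corollary is entirely carried by Theorem \ref{theorem C psi beta}, and the only work is the elementary growth estimate establishing \eqref{theorem C psi beta 1}, which I regard as the (very mild) main step.
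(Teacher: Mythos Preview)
Your proposal is correct and follows essentially the same route as the paper: define $\psi(n)=\sqrt{2n\log\log n}$ for $n\geq 3$ (with arbitrary positive values at $n=1,2$), verify hypotheses (\ref{theorem C psi beta 1}) and (\ref{theorem C psi beta 2}) of Theorem~\ref{theorem C psi beta}, and conclude via $L(\beta)=C(\psi,\beta)$. The only difference is cosmetic: to show $n[\log\log(n+1)-\log\log n]\to 0$ the paper invokes L'H\^opital's rule, while your mean value theorem bound $\log\log(n+1)-\log\log n\leq 1/(n\log n)$ is slightly cleaner.
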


The following corollary shows that the exceptional set on which \eqref{law of the iterated logarithm} fails is of full Hausdorff dimension, while the LIL tells us that the set of exceptions is of null Lebesgue measure. 

\begin{corollary} \label{corollary LIL exceptional set}
The Hausdorff dimension of the set where \eqref{law of the iterated logarithm} fails is $1$, i.e.,
\[
\hdim \left\{ x \in (0,1] : \limsup_{n \to \infty} \frac{\log d_n(x) - n}{\sqrt{2n \log \log n}} \neq 1
\text{ or }
\liminf_{n \to \infty} \frac{\log d_n(x) - n}{\sqrt{2n \log \log n}} \neq -1 \right\} = 1.
\]
\end{corollary}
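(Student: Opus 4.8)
The plan is to realize the exceptional set as a superset of one of the sets $L(\beta)$ appearing in Corollary \ref{corollary L beta}, and then invoke the monotonicity of Hausdorff dimension. Denote by $\mathcal{E}$ the set in the statement, i.e.
\[
\mathcal{E} \coloneqq \left\{ x \in (0,1] : \limsup_{n \to \infty} \frac{\log d_n(x) - n}{\sqrt{2n \log \log n}} \neq 1 \ \text{ or } \ \liminf_{n \to \infty} \frac{\log d_n(x) - n}{\sqrt{2n \log \log n}} \neq -1 \right\}.
\]
Since $\mathcal{E} \subseteq (0,1]$ we have $\hdim \mathcal{E} \leq 1$ for free, so only the lower bound is at issue.

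For the lower bound, fix any real number $\beta \neq 1$, for definiteness $\beta = 0$. If $x \in L(\beta)$, then by the definition \eqref{definition of L beta} the limit of $(\log d_n(x) - n)/\sqrt{2n \log \log n}$ exists and equals $\beta$, so in particular its $\limsup$ equals $\beta \neq 1$, whence $x \in \mathcal{E}$. Therefore $L(0) \subseteq \mathcal{E}$, and Corollary \ref{corollary L beta} together with monotonicity of Hausdorff dimension yields
\[
1 = \hdim L(0) \leq \hdim \mathcal{E} \leq 1 ,
\]
so that $\hdim \mathcal{E} = 1$.

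I do not expect a genuine obstacle: essentially all of the substance has been absorbed into Theorem \ref{theorem C psi beta} and its Corollary \ref{corollary L beta}, and the only point one must still check (and which is implicitly needed already for Corollary \ref{corollary L beta} to be applicable) is that the function $\psi(n) = \sqrt{2n \log \log n}$—with the values at the few small $n$ for which this is undefined or nonpositive redefined to be arbitrary positive numbers, a change that does not affect the limit in \eqref{definition of L beta}—satisfies hypotheses \eqref{theorem C psi beta 1} and \eqref{theorem C psi beta 2} of Theorem \ref{theorem C psi beta}. The former holds because the increment $\psi(n+1) - \psi(n)$ is of order $(\log \log n)/\sqrt{n} \to 0$, and the latter is immediate since $\psi(n) \to \infty$. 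One could equally use any $\beta \in [-\infty, \infty) \setminus \{1\}$ in the argument above, or dually note that $L(\gamma) \subseteq \mathcal{E}$ for every $\gamma \neq -1$ by inspecting the $\liminf$; none of this extra flexibility is needed.
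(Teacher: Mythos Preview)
Your argument is correct and essentially identical to the paper's: the paper also observes that $L(0)$ is contained in the exceptional set (since for $x$ satisfying \eqref{law of the iterated logarithm} the limit does not exist) and invokes $\hdim L(0)=1$ from Corollary~\ref{corollary L beta} together with monotonicity. Your extra paragraph about verifying that $\psi(n)=\sqrt{2n\log\log n}$ satisfies the hypotheses of Theorem~\ref{theorem C psi beta} is handled in the paper within the proof of Corollary~\ref{corollary L beta} itself, so it is not needed here.
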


\begin{remark} \label{remark to corollary LIL exceptional set}
Galambos' question \ref{Q2} in the context of Pierce expansions is related to the LIL as follows. Recall first that the question is concerned with the sets $A_\kappa$, $\kappa \in \mathbb{N}$, defined as in \eqref{definition of A kappa}. For each $\kappa \in (1, \infty)$, the set $A_\kappa$ is an exceptional set to the LIL. Furthermore, Corollary \ref{corollary LIL exceptional set} can be viewed as a consequence of Theorem \ref{theorem A kappa}, which answers the question. See Remark \ref{remark to proof of corollary LIL exceptional set} for the details.
\end{remark}

This paper is organized as follows. In Section \ref{preliminaries}, we introduce some basic notions and properties of Pierce expansions and useful calculation methods for the Hausdorff dimensions. In Section \ref{auxiliary results}, we derive a few auxiliary results on the Hausdorff dimensions of certain subsets of $(0,1]$. In Section \ref{proofs of the results}, we prove all the results mentioned in the introduction.

Throughout the paper, for a subset $F$ of $(0,1]$, we denote by $\diam F$ the diameter, $\hdim F$ the Hausdorff dimension, $\mathcal{H}^s(F)$ the $s$-dimensional Hausdorff measure, and $\overline{F}$ the closure. If $F$ is a subinterval of $(0,1]$, we write $\len F$ for its length. We write $\mathbb{I}$ for the set of irrational numbers in $(0,1]$, i.e., $\mathbb{I} \coloneqq (0,1] \setminus \mathbb{Q}$. We denote $\mathbb{N} = \{ 1, 2, \dotsc \}$, $\mathbb{N}_0 = \{ 0, 1, 2, \dotsc \}$, and $\#$ is the cardinality of a finite set. Following the usual convention, we define $\pm \infty+c \coloneqq \pm \infty$ and ${c}/{\infty} \coloneqq 0$ for any constant $c \in \mathbb{R}$, $c \cdot \infty \coloneqq \infty$, $c^\infty \coloneqq \infty$, and $c^{-\infty} \coloneqq 0$ if $c>0$, and $c \cdot \infty \coloneqq - \infty$ if $c<0$. The logarithm without base, denoted $\log$, will always mean the natural logarithm.

\section{Preliminaries} \label{preliminaries}

In this section, we introduce some elementary facts about Pierce expansions and some useful tools for calculating the Hausdorff dimension, which will be used in proving the main results.

\begin{proposition} [{\cite[Proposition 2.1]{Ahn23}}] \label{digit condition lemma}
Let $x \in \mathbb{I}$. For each $n \in \mathbb{N}$, the following hold:
\begin{enumerate}[label=\upshape(\roman*), ref=\roman*, leftmargin=*, widest=ii]
\item \label{digit condition lemma 1}
$d_{n+1}(x) > d_n(x)$.
\item \label{digit condition lemma 2}
$d_n(x) \geq n$.
\end{enumerate}
\end{proposition}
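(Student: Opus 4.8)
The plan is to prove both statements by establishing a single elementary one-step inequality for the first digit and then iterating it along the orbit of $T$. First I would record the basic bounds that come straight from the definition \eqref{Pierce algorithm 1}: for any $x \in (0,1]$ we have $1/x \geq 1$, so $d_1(x) = \lfloor 1/x \rfloor \geq 1$; moreover the defining inequality $d_1(x) \leq 1/x < d_1(x)+1$ rearranges both to $x > 1/(d_1(x)+1)$ and to $0 \leq 1 - d_1(x)x < x$, so that $T(x) = 1 - d_1(x)x$ satisfies $0 \leq T(x) < x \leq 1$.

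Next I would check that the Pierce algorithm neither terminates on nor leaves $\mathbb{I}$: if $x \in \mathbb{I}$, then $T(x) = 1 - d_1(x)x$ cannot be rational (otherwise $x$ would be), and in particular $T(x) \neq 0$, so $0 < T(x) < x < 1$ and $T(x) \in \mathbb{I}$. Consequently $T^{k}(x) \in \mathbb{I}$ for every $k \in \mathbb{N}$, the sequence $(d_k(x))_{k \in \mathbb{N}}$ is well defined, and it is enough to prove the one-step inequality $d_1(T(y)) > d_1(y)$ for every $y \in \mathbb{I}$: applying it with $y = T^{n-1}(x)$ yields $d_{n+1}(x) = d_1(T^n(x)) > d_1(T^{n-1}(x)) = d_n(x)$, which is part (i).

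For the one-step inequality, set $d \coloneqq d_1(y)$. From $y > 1/(d+1)$ (the rearranged floor bound above) I obtain $dy > d/(d+1)$, hence $T(y) = 1 - dy < 1 - d/(d+1) = 1/(d+1)$, so $1/T(y) > d+1$ and therefore $d_1(T(y)) = \lfloor 1/T(y) \rfloor \geq d+1 > d$. This proves (i). Part (ii) then follows at once: $(d_k(x))_{1 \leq k \leq n}$ is a strictly increasing finite sequence of positive integers with $d_1(x) \geq 1$, so $d_n(x) \geq d_1(x) + (n-1) \geq n$.

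The argument is short, and the only place where any care is needed is the bookkeeping that both irrationality and the strict positivity $T^{k}(x) > 0$ propagate along the orbit, which guarantees that every $d_k(x)$ is genuinely defined; the remaining content is a one-line manipulation of the inequality defining the floor function.
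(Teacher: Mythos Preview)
Your proof is correct and follows essentially the same approach the paper indicates: the paper's own proof simply states that both parts follow from \eqref{Pierce algorithm 1} and \eqref{Pierce algorithm 2} and refers to \cite{Ahn23} for details, and what you have written is precisely a clean, self-contained version of those details---the floor bounds, the propagation of irrationality along the orbit, the one-step inequality $d_1(T(y)) > d_1(y)$, and the immediate deduction of (ii) from (i).
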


\begin{proof}
Both parts follow from \eqref{Pierce algorithm 1} and \eqref{Pierce algorithm 2}. See \cite{Ahn23} for the details.
\end{proof}

We shall introduce a symbolic space which is closely related to the Pierce expansion digit sequences. Let $\Sigma_0 \coloneqq \{ \epsilon \}$, where $\epsilon$ denotes the empty word. Define
\begin{align*}
\Sigma_n &\coloneqq \{ (\sigma_k)_{k=1}^n \in \mathbb{N}^n : \sigma_k < \sigma_{k+1} \text{ for all } 1 \leq k \leq n-1 \} \quad \text{for } n \in \mathbb{N}, \\
\Sigma_\infty &\coloneqq \{ (\sigma_k)_{k \in \mathbb{N}} \in \mathbb{N}^{\mathbb{N}} : \sigma_k < \sigma_{k+1} \text{ for all } k \in \mathbb{N} \}.
\end{align*}
Finally, let
\[
\Sigma \coloneqq \Sigma_0 \cup \bigcup_{n \in \mathbb{N}} \Sigma_n \cup \Sigma_\infty.
\]

We say that a finite sequence $(\sigma_k)_{k=1}^n \in \mathbb{N}^n$ for some $n \in \mathbb{N}$ is {\em Pierce admissible} if there exists $x \in (0,1]$ such that $d_k(x) = \sigma_k$ for all $1 \leq k \leq n$. Here, the digit sequence of $x$ need not be of length $n$. For an infinite sequence $(\sigma_k)_{k \in \mathbb{N}} \in \mathbb{N}^{\mathbb{N}}$, it is said to be {\em Pierce admissible} if $(\sigma_k)_{k=1}^n$ is Pierce admissible for all $n \in \mathbb{N}$. In other words, $(\sigma_k)_{k \in \mathbb{N}} \in \mathbb{N}^{\mathbb{N}}$ is Pierce admissible if and only if there exists $x \in (0,1]$ such that $d_k(x) = \sigma_k$ for all $k \in \mathbb{N}$. We denote by $\Sigma_{\admissible}$ the collection of all Pierce admissible sequences.

\begin{proposition} [{\cite[Proposition 2.2]{Fan15}}] \label{strict increasing condition}
We have $\Sigma_{\admissible} = \Sigma \setminus \Sigma_0$.
\end{proposition}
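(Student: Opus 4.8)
The plan is to prove the two inclusions $\Sigma_{\admissible} \subseteq \Sigma \setminus \Sigma_0$ and $\Sigma \setminus \Sigma_0 \subseteq \Sigma_{\admissible}$ separately; the first is a short monotonicity remark, and the second --- the substantive half --- I would establish by an explicit construction of a preimage point.

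For $\Sigma_{\admissible} \subseteq \Sigma \setminus \Sigma_0$ I would first note that the empty word $\epsilon$ is not Pierce admissible, since admissibility was defined only for words of length at least one, so it remains to check that every Pierce admissible word, finite or infinite, is strictly increasing and hence lies in some $\Sigma_n$ with $n \in \mathbb{N}$ or in $\Sigma_\infty$. The key elementary fact I would record is that for every $y \in (0,1]$ with $T(y) \neq 0$ one has $d_1(T(y)) > d_1(y)$: writing $m \coloneqq d_1(y) = \lfloor 1/y \rfloor$, the inequality $m \leq 1/y < m+1$ gives $1/(m+1) < y \leq 1/m$, whence by \eqref{Pierce algorithm 1} we get $T(y) = 1 - my < 1 - m/(m+1) = 1/(m+1)$, so that $1/T(y) > m+1$ and hence $d_1(T(y)) = \lfloor 1/T(y) \rfloor \geq m+1 > m$. (This is Proposition \ref{digit condition lemma}(\ref{digit condition lemma 1}) with the irrationality hypothesis dropped, which is what is needed since an admissible word may a priori be witnessed only by a rational point.) Applying this with $y = T^{k-1}(x)$, at any stage $k$ where both $d_k(x)$ and $d_{k+1}(x)$ are defined we get $d_{k+1}(x) = d_1(T(T^{k-1}(x))) > d_1(T^{k-1}(x)) = d_k(x)$, which is the required strict monotonicity.

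For the reverse inclusion it suffices to treat $\Sigma_\infty$, since a finite word $(\sigma_1, \dots, \sigma_n) \in \Sigma_n$ extends to $(\sigma_1, \dots, \sigma_n, \sigma_n+1, \sigma_n+2, \dots) \in \Sigma_\infty$, and then any $x \in (0,1]$ whose Pierce digit sequence is this extension satisfies $d_k(x) = \sigma_k$ for $1 \leq k \leq n$. So fix $\sigma = (\sigma_k)_{k \in \mathbb{N}} \in \Sigma_\infty$. Since $\sigma$ is strictly increasing in $\mathbb{N}$ we have $\sigma_k \geq k$, so the partial products $t_j \coloneqq \prod_{k=1}^j \sigma_k^{-1}$ are bounded by $1/j!$ and strictly decrease to $0$; hence
\[
x_\sigma \coloneqq \sum_{j=1}^\infty (-1)^{j+1} \prod_{k=1}^j \frac{1}{\sigma_k}
\]
converges and, by the usual alternating-series estimates, satisfies $t_1 - t_2 < x_\sigma < t_1$. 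Because $t_1 - t_2 = (\sigma_2-1)/(\sigma_1\sigma_2)$ and $\sigma_2 \geq \sigma_1 + 1$ (the digits being integers with $\sigma_2 > \sigma_1$), this yields $t_1 - t_2 \geq 1/(\sigma_1+1)$, so
\[
\frac{1}{\sigma_1+1} < x_\sigma < \frac{1}{\sigma_1} \leq 1 .
\]
Thus $x_\sigma \in (0,1]$ and $d_1(x_\sigma) = \lfloor 1/x_\sigma \rfloor = \sigma_1$. I would then compute, using $\sigma_1 x_\sigma = 1 - x_{S\sigma}$ where $S\sigma \coloneqq (\sigma_{k+1})_{k \in \mathbb{N}} \in \Sigma_\infty$ denotes the shift, that $T(x_\sigma) = 1 - \sigma_1 x_\sigma = x_{S\sigma}$. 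Iterating this (each shift $S^{k-1}\sigma$ lies again in $\Sigma_\infty$, so the same computations apply) gives $T^{k-1}(x_\sigma) = x_{S^{k-1}\sigma} \neq 0$ for all $k$, hence $d_k(x_\sigma)$ is defined for every $k \in \mathbb{N}$ and equals $d_1(x_{S^{k-1}\sigma}) = \sigma_k$. Therefore $\sigma \in \Sigma_{\admissible}$, and combined with the first inclusion this gives $\Sigma_{\admissible} = \Sigma \setminus \Sigma_0$.

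The step I expect to require the most care is the pair of alternating-series bounds that pin down $d_1(x_\sigma) = \sigma_1$: the lower bound $x_\sigma > 1/(\sigma_1+1)$ depends precisely on the integrality gap $\sigma_2 \geq \sigma_1 + 1$, and I want this same estimate available at every stage of the induction, which is why it is convenient to phrase the construction so that each tail $S^{k-1}\sigma$ is itself a strictly increasing sequence of positive integers to which the base computation applies verbatim. The reindexing underlying the identity $\sigma_1 x_\sigma = 1 - x_{S\sigma}$ is the only other place needing a little bookkeeping, and it is routine.
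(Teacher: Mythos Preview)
Your proof is correct and self-contained. The paper does not actually prove this proposition: it is stated as a citation to \cite[Proposition~2.2]{Fan15} with no accompanying proof, so there is nothing in the paper to compare against beyond the bare statement. Your argument --- the monotonicity computation $d_1(T(y)) > d_1(y)$ for the forward inclusion, and the explicit alternating-series construction $x_\sigma$ together with the shift identity $T(x_\sigma) = x_{S\sigma}$ for the reverse --- is the standard direct route and is carried out cleanly; in particular the delicate step you flag, namely using the integrality gap $\sigma_2 \geq \sigma_1 + 1$ to get the strict lower bound $x_\sigma > 1/(\sigma_1+1)$ and hence $d_1(x_\sigma) = \sigma_1$, is handled correctly and propagates through the induction as you describe.
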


\begin{proposition} [{See \cite[p.~392]{Ahn23}}] \label{strict increasing lemma}
For each $\sigma \in \Sigma_{\admissible}$, one of the following holds:
\begin{enumerate}[label=\upshape(\roman*), ref=\roman*, leftmargin=*, widest=ii]
\item \label{strict increasing lemma 1}
$\sigma \coloneqq (\sigma_k)_{k=1}^n \in \Sigma_n$ for some $n \in \mathbb{N}$, in which case $\sigma_k \geq k$ for all $1 \leq k \leq n$.
\item \label{strict increasing lemma 2}
$\sigma \coloneqq (\sigma_k)_{k \in \mathbb{N}} \in \Sigma_\infty$, in which case $\sigma_k \geq k$ for all $k \in \mathbb{N}$.
\end{enumerate}
\end{proposition}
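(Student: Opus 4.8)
The plan is to reduce the statement to a one-line induction by first applying the structural description of $\Sigma_{\admissible}$ furnished by Proposition \ref{strict increasing condition}. That proposition gives $\Sigma_{\admissible} = \Sigma \setminus \Sigma_0$, so any $\sigma \in \Sigma_{\admissible}$ falls into exactly one of the two cases listed: either $\sigma = (\sigma_k)_{k=1}^n \in \Sigma_n$ for some $n \in \mathbb{N}$, or $\sigma = (\sigma_k)_{k \in \mathbb{N}} \in \Sigma_\infty$. In both cases, directly from the definitions of $\Sigma_n$ and $\Sigma_\infty$, the entries $\sigma_k$ are positive integers satisfying $\sigma_k < \sigma_{k+1}$ on the relevant range of indices.

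The remaining work is the induction. The base case $\sigma_1 \geq 1$ holds simply because $\sigma_1 \in \mathbb{N}$. For the inductive step, assuming $\sigma_k \geq k$, the fact that $\sigma_k$ and $\sigma_{k+1}$ are integers with $\sigma_{k+1} > \sigma_k$ upgrades the strict inequality to $\sigma_{k+1} \geq \sigma_k + 1 \geq k+1$. This establishes $\sigma_k \geq k$ for all $k$ with $1 \leq k \leq n$ in case \eqref{strict increasing lemma 1} and for all $k \in \mathbb{N}$ in case \eqref{strict increasing lemma 2}, which is precisely the claim.

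I do not anticipate any genuine obstacle: the only non-bookkeeping ingredient is the elementary observation that a strict inequality between two integers forces a gap of at least $1$, which is exactly what converts "strictly increasing" into the linear lower bound $\sigma_k \geq k$. One point worth stating explicitly in the write-up is that the dichotomy "one of the following holds" requires no separate argument about Pierce admissibility, since it is simply the partition of $\Sigma \setminus \Sigma_0$ into the finite-length pieces $\Sigma_n$ and the infinite piece $\Sigma_\infty$, already packaged by Proposition \ref{strict increasing condition}.
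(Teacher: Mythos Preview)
Your proposal is correct and follows essentially the same approach as the paper: invoke Proposition~\ref{strict increasing condition} for the dichotomy, then derive $\sigma_k \geq k$ from the strictly increasing condition built into the definitions of $\Sigma_n$ and $\Sigma_\infty$. The only difference is that you spell out the one-line induction explicitly, whereas the paper simply says the bound ``follows from the definition.''
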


\begin{proof}
By Proposition \ref{strict increasing condition}, for any $\sigma \in \Sigma_{\admissible}$, either $\sigma \in \Sigma_n$ for some $n \in \mathbb{N}$ or $\sigma \in \Sigma_\infty$. Part (\ref{strict increasing lemma 1}) follows from the definition of $\Sigma_n$, and (\ref{strict increasing lemma 2}) from the definition of $\Sigma_\infty$. 
\end{proof}

For the empty word $\epsilon \in \Sigma_0$, we define $\epsilon^{(j)} \coloneqq (j) \in \Sigma_1$ for each $j \in \mathbb{N}$. For each $n \in \mathbb{N}$ and $\sigma \coloneqq (\sigma_k)_{k=1}^n \in \Sigma_n$, we define some finite sequences associated with $\sigma$. Let $\underline{\sigma} \coloneqq (\tau_k)_{k=1}^n \in \Sigma_n$ and $\sigma^{(j)} \coloneqq (\upsilon_k)_{k=1}^{n+1} \in \Sigma_{n+1}$ for each positive integer $j >\sigma_n$ be given by
\[
\tau_k \coloneqq \begin{cases} \sigma_k, &\text{if } 1 \leq k \leq n-1; \\ \sigma_n+1, &\text{if } k = n, \end{cases}
\quad \text{and} \quad
\upsilon_k \coloneqq \begin{cases} \sigma_k, &\text{if } 1 \leq k \leq n; \\ j, &\text{if } k = n+1, \end{cases}
\]
respectively, i.e.,
\begin{align} \label{definition of underline sigma}
\underline{\sigma} = (\sigma_1, \dotsc, \sigma_{n-1}, \sigma_n + 1),
\quad \text{and} \quad
\sigma^{(j)} = (\sigma_1, \dotsc, \sigma_{n-1}, \sigma_n, j).
\end{align}

Let $\varphi_0 (\epsilon) \coloneqq 0$. For each $n \in \mathbb{N}$ and $\sigma \coloneqq (\sigma_k)_{k=1}^n \in \mathbb{N}^n$, we define
\begin{align} \label{definition of varphi n}
\varphi_n (\sigma) 
\coloneqq \sum_{j=1}^n \left( (-1)^{j+1} \prod_{k=1}^j \frac{1}{\sigma_k} \right).
\end{align}
It is clear that the map $\varphi_n \colon \mathbb{N}^n \to \mathbb{R}$ is well-defined as a sum of fintely many terms. In fact, $\varphi_n$ is modelled on the $n$th partial sum of the Pierce expansion \eqref{Pierce expansion}.

\begin{proposition}[See {\cite[Proposition 2.3]{Ahn23}} and {\cite[Proposition 2.1]{Fan15}}]
Let $n \in \mathbb{N}$. For each $\sigma \coloneqq (\sigma_k)_{k=1}^n \in \Sigma_n$, 
\begin{align} \label{two sequences for rational number}
\varphi_n (\underline{\sigma}) = \varphi_{n+1} (\sigma^{(\sigma_n+1)}).
\end{align}
\end{proposition}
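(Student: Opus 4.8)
The plan is to prove \eqref{two sequences for rational number} by a direct computation, expanding both sides according to the definition \eqref{definition of varphi n} of $\varphi_n$ and $\varphi_{n+1}$ and then comparing them term by term. Write $P_j \coloneqq \prod_{k=1}^{j} (1/\sigma_k)$ for $0 \le j \le n$, with the convention $P_0 = 1$; since $\sigma \in \Sigma_n \subseteq \mathbb{N}^n$, each $P_j$ is a well-defined nonzero real number, so we are free to divide by it later.

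First I would observe, using \eqref{definition of underline sigma}, that $\underline{\sigma}$ and $\sigma^{(\sigma_n+1)}$ both agree with $\sigma$ in their first $n-1$ coordinates. Consequently, in the sums defining $\varphi_n(\underline{\sigma})$ and $\varphi_{n+1}(\sigma^{(\sigma_n+1)})$ the terms indexed by $1 \le j \le n-1$ are literally identical and cancel, and the claimed identity reduces to
\[
(-1)^{n+1} P_{n-1}\,\frac{1}{\sigma_n+1} \;=\; (-1)^{n+1} P_n \;+\; (-1)^{n+2} P_n\,\frac{1}{\sigma_n+1}.
\]
Here the left-hand side is the only surviving term of $\varphi_n(\underline{\sigma})$, namely the $j=n$ term, whose last factor is $1/(\sigma_n+1)$ because $\underline{\sigma}$ has $n$th coordinate $\sigma_n+1$; the right-hand side is the sum of the $j=n$ and $j=n+1$ terms of $\varphi_{n+1}(\sigma^{(\sigma_n+1)})$, whose last coordinates are $\sigma_n$ and $\sigma_n+1$ respectively.

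Then I would divide this reduced equation by $(-1)^{n+1} P_{n-1} \neq 0$ and use $P_n = P_{n-1}/\sigma_n$ together with $(-1)^{n+2} = -(-1)^{n+1}$, so that it becomes the elementary identity
\[
\frac{1}{\sigma_n+1} \;=\; \frac{1}{\sigma_n} - \frac{1}{\sigma_n(\sigma_n+1)} \;=\; \frac{\sigma_n + 1 - 1}{\sigma_n(\sigma_n+1)},
\]
which is clearly true. This finishes the proof. I do not expect any real obstacle: the content is a one-line partial-fraction computation, and the only points needing a little care are the sign bookkeeping and the verification that the first $n-1$ terms genuinely coincide (so the cancellation is legitimate), both of which are routine. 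If one prefers a more conceptual phrasing, the same computation says exactly that appending the digit $\sigma_n+1$ after a trailing digit $\sigma_n$, and simultaneously replacing that trailing digit by $\sigma_n+1$ in the shorter word, leaves the partial sum unchanged because $P_{n-1}\bigl(1/(\sigma_n+1) - 1/\sigma_n + 1/(\sigma_n(\sigma_n+1))\bigr) = 0$; but the term-by-term comparison above is the cleanest way to write it.
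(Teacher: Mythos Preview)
Your proof is correct and is essentially the same as the paper's: both cancel the common first $n-1$ terms (the paper writes this as $\varphi_{n-1}(\tau)$) and then reduce the remaining equality to the partial-fraction identity $1/(\sigma_n+1) = 1/\sigma_n - 1/(\sigma_n(\sigma_n+1))$. The only cosmetic difference is that the paper works forward from $\varphi_n(\underline{\sigma})$ to $\varphi_{n+1}(\sigma^{(\sigma_n+1)})$, whereas you set up the difference and simplify, but the underlying computation is identical.
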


\begin{proof}
Let $\sigma \coloneqq (\sigma_k)_{k=1}^n \in \Sigma_n$. Put $\tau \coloneqq (\sigma_k)_{k=1}^{n-1} \in \Sigma_{n-1}$. Then, by \eqref{definition of underline sigma} and \eqref{definition of varphi n}, we find that 
\begin{align*}
\varphi_n (\underline{\sigma})
&= \varphi_{n-1} (\tau) + (-1)^{n+1} \left( \prod_{k=1}^{n-1} \frac{1}{\sigma_k} \right) \frac{1}{\sigma_n+1} \\
&= \varphi_{n-1} (\tau) + (-1)^{n+1} \left( \prod_{k=1}^{n-1} \frac{1}{\sigma_k} \right) \left( \frac{1}{\sigma_n} - \frac{1}{\sigma_n (\sigma_n+1)} \right) \\
&= \varphi_{n-1} (\tau) + (-1)^{n+1} \left( \prod_{k=1}^{n} \frac{1}{\sigma_k} \right) + (-1)^{n+2} \left( \prod_{k=1}^{n} \frac{1}{\sigma_k} \right) \frac{1}{\sigma_n+1} 
= \varphi_{n+1} (\sigma^{(\sigma_n+1)}),
\end{align*}
as desired.
\end{proof}

For each $n \in \mathbb{N}$ and $\sigma \coloneqq (\sigma_k)_{k=1}^n \in \Sigma_n$, we define the {\em $n$th level fundamental interval} by
\[
\mathcal{I}(\sigma) \coloneqq \{ x \in (0,1] : d_k(x) = \sigma_k \text{ for all } 1 \leq k \leq n \}.
\]
The following proposition justifies the naming of $\mathcal{I}(\sigma)$ as an interval.

\begin{proposition}[See {\cite[Proposition 3.5]{Ahn23}} and {\cite[Theorem 1]{Sha86}}] \label{I sigma}
Let $n \in \mathbb{N}$ and $\sigma \coloneqq (\sigma_k)_{k=1}^n \in \Sigma_n$. Then $\mathcal{I} (\sigma)$ is an interval with endpoints $\varphi_n (\sigma)$ and $\varphi_n (\underline{\sigma})$. More precisely,
\begin{align} \label{I sigma endpoints}
\mathcal{I}(\sigma)  = \begin{cases}
(\varphi_n ({\underline{\sigma}}), \varphi_n (\sigma)], &\text{if $n$ is odd;} \\
[\varphi_n (\sigma), \varphi_n ({\underline{\sigma}})), &\text{if $n$ is even,}
\end{cases}
\quad \text{or} \quad
\mathcal{I}(\sigma)  = \begin{cases}
(\varphi_n ({\underline{\sigma}}), \varphi_n (\sigma)), &\text{if $n > 1$ is odd;} \\
(\varphi_n (\sigma), \varphi_n ({\underline{\sigma}})), &\text{if $n$ is even,}
\end{cases}
\end{align}
according as $\sigma_n > \sigma_{n-1} + 1$ or $\sigma_n = \sigma_{n-1}+1$ if $n>1$. Consequently,
\begin{align} \label{diam I sigma}
\len \mathcal{I} (\sigma) = | \varphi_n (\sigma) - \varphi_n (\underline{\sigma})| = \left( \prod_{k=1}^{n} \frac{1}{\sigma_k} \right) \frac{1}{\sigma_n+1} .
\end{align}
\end{proposition}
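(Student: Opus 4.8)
The plan is to argue by induction on $n$, carrying along an auxiliary statement about the image $T^n(\mathcal{I}(\sigma))$. The one preliminary fact I need is a telescoping identity. From \eqref{Pierce algorithm 1} and \eqref{Pierce algorithm 2} we have $T^k(x) = 1 - d_k(x)\,T^{k-1}(x)$, that is, $T^{k-1}(x) = (1 - T^k(x))/d_k(x)$ whenever these quantities are defined; substituting repeatedly gives, for every $x \in (0,1]$ whose digit string has length at least $n$,
\[
x = \varphi_n\bigl( (d_k(x))_{k=1}^n \bigr) + (-1)^n \biggl( \prod_{k=1}^n \frac{1}{d_k(x)} \biggr) T^n(x).
\]
Hence on a fixed fundamental interval $\mathcal{I}(\sigma)$ with $\sigma = (\sigma_k)_{k=1}^n \in \Sigma_n$, the map $x \mapsto T^n(x)$ coincides with the restriction of the affine bijection $g_\sigma \colon x \mapsto (-1)^n \bigl( \prod_{k=1}^n \sigma_k \bigr)\bigl(x - \varphi_n(\sigma)\bigr)$ of $\mathbb{R}$. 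A short computation with \eqref{definition of underline sigma} and \eqref{definition of varphi n} records the data I will use: $g_\sigma(\varphi_n(\sigma)) = 0$, $g_\sigma(\varphi_n(\underline{\sigma})) = 1/(\sigma_n + 1)$, and for each integer $j > \sigma_n$, $g_\sigma^{-1}(1/j) = \varphi_{n+1}(\sigma^{(j)})$ and $g_\sigma^{-1}(1/(j+1)) = \varphi_{n+1}(\underline{\sigma^{(j)}})$.

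I would then prove, by induction on $n$, the conjunction of \textup{(a)} formula \eqref{I sigma endpoints} for every $\sigma \in \Sigma_n$, and \textup{(b)} $T^n(\mathcal{I}(\sigma))$ is an interval with endpoints $0$ and $1/(\sigma_n + 1)$ that does not contain $1/(\sigma_n + 1)$. The base case $n = 1$ is direct: $\mathcal{I}((\sigma_1)) = \{ x \in (0,1] : \lfloor 1/x \rfloor = \sigma_1 \} = (1/(\sigma_1 + 1), 1/\sigma_1]$, the first alternative of \eqref{I sigma endpoints}, and $T(x) = 1 - \sigma_1 x$ maps it onto $[0, 1/(\sigma_1 + 1))$. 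For the inductive step fix $\sigma = (\sigma_k)_{k=1}^{n+1} \in \Sigma_{n+1}$, put $\tau \coloneqq (\sigma_k)_{k=1}^n \in \Sigma_n$ and $j \coloneqq \sigma_{n+1} > \sigma_n$, and unwind the definitions to get $\mathcal{I}(\sigma) = \{ x \in \mathcal{I}(\tau) : T^n(x) \in \mathcal{I}((j)) \} = g_\tau^{-1}\bigl( \mathcal{I}((j)) \cap T^n(\mathcal{I}(\tau)) \bigr)$, using that $T^n$ restricted to $\mathcal{I}(\tau)$ equals $g_\tau$.

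The crux is to pin down $\mathcal{I}((j)) \cap T^n(\mathcal{I}(\tau))$. Using part \textup{(b)} of the hypothesis for $\tau$ together with $\mathcal{I}((j)) = (1/(j+1), 1/j]$ and the inequality $1/j \leq 1/(\sigma_n + 1) = \sup T^n(\mathcal{I}(\tau))$ — where the supremum is never attained and equality holds precisely when $j = \sigma_n + 1$ — this intersection is the interval with endpoints $1/(j+1)$ and $1/j$ that always omits $1/(j+1)$ and contains $1/j$ if and only if $j > \sigma_n + 1$. Pulling it back through $g_\tau$ (which sends $1/j \mapsto \varphi_{n+1}(\sigma)$ and $1/(j+1) \mapsto \varphi_{n+1}(\underline{\sigma})$) and recording that $\varphi_{n+1}(\sigma) - \varphi_{n+1}(\underline{\sigma})$ has the sign of $(-1)^n$, i.e.\ is positive exactly when $n+1$ is odd, reproduces the four alternatives of \eqref{I sigma endpoints} with $n$ replaced by $n+1$, which is \textup{(a)}. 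Applying $y \mapsto 1 - j y$ (the action of $T$ on $\mathcal{I}((j))$) to $T^n(\mathcal{I}(\sigma)) = \mathcal{I}((j)) \cap T^n(\mathcal{I}(\tau))$ and using $1 - j/j = 0$, $1 - j/(j+1) = 1/(j+1)$ then gives \textup{(b)} for $\sigma$, closing the induction.

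Finally, the length formula \eqref{diam I sigma} is immediate from \textup{(a)}: an interval with endpoints $\varphi_n(\sigma)$, $\varphi_n(\underline{\sigma})$ has length $|\varphi_n(\sigma) - \varphi_n(\underline{\sigma})|$, and since $g_\sigma$ has slope $(-1)^n \prod_{k=1}^n \sigma_k$ and carries these two points to $0$ and $1/(\sigma_n + 1)$, this length equals $\bigl( \prod_{k=1}^n 1/\sigma_k \bigr) / (\sigma_n + 1)$. I expect the genuine difficulty to be the boundary bookkeeping — which endpoints belong to $\mathcal{I}(\sigma)$ — and this is exactly why the auxiliary claim \textup{(b)} must be threaded through the induction: everything hinges on the fact that the upper endpoint $1/(\sigma_n + 1)$ of $T^n(\mathcal{I}(\tau))$ is never attained, which is precisely what forces $\mathcal{I}(\sigma^{(j)})$ to be open at $\varphi_{n+1}(\sigma^{(j)})$ exactly in the borderline case $j = \sigma_n + 1$ (in agreement with the remark in the introduction that $\varphi_n(\underline{\sigma})$ is a rational whose terminating Pierce expansion has last digit $\sigma_n + 1$ and so never lies in $\mathcal{I}(\sigma)$). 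The rest is routine manipulation of the partial sums $\varphi_n$.
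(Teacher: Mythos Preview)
Your proof is correct. The paper does not supply its own argument for this proposition: it simply cites \cite[Proposition~3.5]{Ahn23} and \cite[Theorem~1]{Sha86} and moves on. You have written a clean, self-contained inductive proof in its place.

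A brief remark on your approach versus what the cited references do. The telescoping identity you open with is exactly Shallit's starting point in \cite{Sha86}, and the reduction $\mathcal{I}(\sigma) = g_\tau^{-1}\bigl(\mathcal{I}((j)) \cap T^n(\mathcal{I}(\tau))\bigr)$ is the natural dynamical way to run the induction. Your idea of carrying the auxiliary statement \textup{(b)} about $T^n(\mathcal{I}(\tau))$ through the induction is precisely what is needed to get the endpoint bookkeeping right: the key observation that $1/(\sigma_n+1)$ is never attained in $T^n(\mathcal{I}(\tau))$ is what distinguishes the two alternatives in \eqref{I sigma endpoints}, and you handle it cleanly. One cosmetic point: you could strengthen \textup{(b)} to say that $0 \in T^n(\mathcal{I}(\sigma))$ if and only if $\varphi_n(\sigma) \in \mathcal{I}(\sigma)$ (equivalently, $n=1$ or $\sigma_n > \sigma_{n-1}+1$), which would make the two cases completely explicit at every stage; but as you correctly note, the status of $0$ is irrelevant for the intersection with $\mathcal{I}((j)) \subseteq (0,1]$, so your weaker form of \textup{(b)} suffices.
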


We will often use the following proposition to obtain an upper bound of the Hausdorff dimension.

\begin{proposition}[{\cite[Proposition 4.1]{Fal14}}] \label{box dimension}
Let $\mathcal{E}$ be a non-empty subset of $\mathbb{R}$. Let $(\mathcal{F}_n)_{n \in \mathbb{N}}$ be a sequence of finite coverings of $\mathcal{E}$. If for each $n \in \mathbb{N}$, we have $\diam F \leq \delta_n$ for every $F \in \mathcal{F}_n$, with $\delta_n \to 0$ as $n \to \infty$, then
\[
\hdim \mathcal{E} \leq \liminf_{n \to \infty} \frac{\log (\# \mathcal{F}_n)}{\log (1/\delta_n)}.
\]
\end{proposition}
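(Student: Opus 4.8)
The plan is to read the bound off directly from the definition of Hausdorff measure: I would prove that $\mathcal{H}^s(\mathcal{E}) \le 1$ (in particular $\mathcal{H}^s(\mathcal{E}) < \infty$) for every real $s$ strictly greater than $d \coloneqq \liminf_{n \to \infty} \log(\#\mathcal{F}_n)/\log(1/\delta_n)$, whence $\hdim \mathcal{E} \le s$, and then let $s \downarrow d$. If $d = \infty$ there is nothing to prove, so I would assume $d < \infty$ and fix such an $s$.

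First I would extract a suitable subsequence. Since $\delta_n \to 0$, we have $\delta_n < 1$ for all large $n$, so the quotient $\log(\#\mathcal{F}_n)/\log(1/\delta_n)$ is a well-defined nonnegative real for all large $n$; and since its $\liminf$ equals $d < s$, the inequality $\log(\#\mathcal{F}_n)/\log(1/\delta_n) < s$ holds for infinitely many $n$. I would choose indices $n_1 < n_2 < \cdots$ among these with $\delta_{n_j} < 1$ and $\delta_{n_j} \to 0$ as $j \to \infty$; clearing denominators then gives $\#\mathcal{F}_{n_j} < \delta_{n_j}^{-s}$ for every $j$.

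Next comes the covering estimate. For each $j$, the family $\mathcal{F}_{n_j}$ is a finite cover of $\mathcal{E}$ by sets of diameter at most $\delta_{n_j}$, hence an admissible $\delta_{n_j}$-cover, so
\[
\mathcal{H}^s_{\delta_{n_j}}(\mathcal{E}) \le \sum_{F \in \mathcal{F}_{n_j}} (\diam F)^s \le (\#\mathcal{F}_{n_j})\, \delta_{n_j}^{\,s} < \delta_{n_j}^{-s}\, \delta_{n_j}^{\,s} = 1 .
\]
To conclude, I would pass from these special scales to the Hausdorff measure itself: since $\delta \mapsto \mathcal{H}^s_\delta(\mathcal{E})$ is non-increasing and $\mathcal{H}^s(\mathcal{E}) = \sup_{\delta > 0} \mathcal{H}^s_\delta(\mathcal{E})$, for an arbitrary $\delta > 0$ I would pick $j$ with $\delta_{n_j} \le \delta$ and obtain $\mathcal{H}^s_\delta(\mathcal{E}) \le \mathcal{H}^s_{\delta_{n_j}}(\mathcal{E}) < 1$; taking the supremum over $\delta > 0$ yields $\mathcal{H}^s(\mathcal{E}) \le 1$, hence $\hdim \mathcal{E} \le s$, and letting $s \downarrow d$ finishes the argument.

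I do not expect a genuine obstacle here. The two points deserving a little care are the extraction of the subsequence along which the $\liminf$ is essentially realized — after discarding the finitely many $n$ with $\delta_n \ge 1$, so that $\log(1/\delta_n)$ carries the correct sign — and the monotonicity of the set functions $\mathcal{H}^s_\delta$, which is precisely what transfers the estimate at the scales $\delta_{n_j}$ to every scale and therefore to $\mathcal{H}^s$ itself.
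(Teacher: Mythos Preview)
Your argument is correct. Note, however, that the paper does not supply its own proof of this proposition: it is quoted directly from Falconer's book \cite[Proposition~4.1]{Fal14}, so there is no in-paper proof to compare against. What you have written is essentially the standard argument behind that reference---bounding $\mathcal{H}^s_{\delta_{n_j}}(\mathcal{E})$ by $(\#\mathcal{F}_{n_j})\,\delta_{n_j}^{\,s}$ along a subsequence realizing the $\liminf$, and then passing to the limit in $\delta$---and it is entirely sound.
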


The following proposition will play a key role in finding a lower bound of the Hausdorff dimension.

\begin{proposition}[{\cite[Example 4.6(a)]{Fal14}}] \label{gap dimension}
Let $\mathcal{E}_0$ be a closed subinterval of $[0,1]$, $( \mathcal{E}_k )_{k \in \mathbb{N}_0}$ a decreasing sequence of sets, and $\mathcal{E} \coloneqq \bigcap_{k \in \mathbb{N}_0} \mathcal{E}_k$. Suppose that each $\mathcal{E}_k$, $k \in \mathbb{N}$, is a union of a finite number of disjoint closed intervals, called $k$th level basic intervals, and each $(k-1)$th level basic interval in $\mathcal{E}_{k-1}$ contains $m_k$ intervals of $\mathcal{E}_{k}$ which are separated by gaps of lengths at least $\varepsilon_k$. If $m_k \geq 2$ and $\varepsilon_{k} > \varepsilon_{k+1} > 0$ for each $k \in \mathbb{N}$, then
\[
\hdim \mathcal{E} \geq \liminf_{n \to \infty} \frac{\log \left( \prod \limits_{k=1}^n m_k \right)}{\log (1/(m_{n+1} \varepsilon_{n+1}))}.
\]
\end{proposition}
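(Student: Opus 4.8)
The natural route is the classical \emph{mass distribution principle} (see \cite{Fal14}): to prove $\hdim \mathcal{E} \ge s$ it suffices to exhibit a Borel probability measure $\mu$ with $\operatorname{supp}\mu \subseteq \mathcal{E}$ and constants $c,\delta>0$ such that $\mu(U) \le c\,(\diam U)^s$ for every set $U$ with $\diam U \le \delta$; one then applies this with $s$ ranging over all values below the liminf in the statement and lets $s$ increase to that liminf.

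I would begin with two harmless reductions. First, by deleting surplus children one may assume that each $(k-1)$th level basic interval contains \emph{exactly} $m_k$ basic intervals of the next level; this only shrinks $\mathcal{E}$, so a lower bound for the smaller set implies the claim, and now $\mathcal{E}_n$ is a disjoint union of exactly $\prod_{k=1}^n m_k$ basic intervals. Second, since these $\prod_{k=1}^n m_k$ intervals lie in $\mathcal{E}_0 \subseteq [0,1]$ and are separated by $\prod_{k=1}^n m_k - 1$ gaps of length at least $\varepsilon_n$, one gets $\varepsilon_n \le (\prod_{k=1}^n m_k - 1)^{-1}$, hence $\varepsilon_n \to 0$ (because $m_k \ge 2$) and moreover $m_{n+1}\varepsilon_{n+1} \le 1$ for all large $n$; in particular the liminf in the statement is a well-defined number in $[0,1]$, and if it equals $0$ there is nothing to prove. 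I would then define $\mu$ by the standard mass-distribution construction on a nested family of intervals, assigning to each $n$th level basic interval the mass $(\prod_{k=1}^n m_k)^{-1}$ and splitting it equally among its $m_k$ children; this yields a Borel probability measure carried by $\mathcal{E}$.

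The heart of the argument is the estimate of $\mu(B(x,r))$ for $x \in \mathcal{E}$ and small $r$. Given $r$, let $n=n(r)$ be the unique index with $\varepsilon_{n+1} \le 2r < \varepsilon_n$ (available since $(\varepsilon_n)$ strictly decreases to $0$). Any two distinct basic intervals of $\mathcal{E}_n$ are separated by a gap of length at least $\varepsilon_n$ — siblings by hypothesis, and non-siblings because the gap between them contains a gap of some earlier level, which is even larger — and $\varepsilon_n > 2r = \diam B(x,r)$, so $B(x,r)$ meets at most one $n$th level basic interval $I$. Within $I$ the $m_{n+1}$ basic intervals of $\mathcal{E}_{n+1}$ are separated by gaps $\ge \varepsilon_{n+1}$, so a subinterval of length $2r$ meets at most $\min\{m_{n+1},\,1+2r/\varepsilon_{n+1}\}$ of them; multiplying by their common mass and using $2r \ge \varepsilon_{n+1}$ and $m_{n+1}\ge 2$ gives
\[
\mu\bigl(B(x,r)\bigr) \;\le\; \max\left\{\ \left(\prod_{k=1}^{n} m_k\right)^{-1},\ \ \frac{4r}{m_{n+1}\varepsilon_{n+1}}\left(\prod_{k=1}^{n} m_k\right)^{-1}\ \right\}.
\]
Now fix $s$ strictly below the liminf; for all large $n$ this says precisely $(\prod_{k=1}^n m_k)^{-1} \le (m_{n+1}\varepsilon_{n+1})^s$. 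Inserting this into the two branches, and noting that the first branch occurs only when $r \ge \tfrac14 m_{n+1}\varepsilon_{n+1}$ whereas the second occurs only when $m_{n+1}\varepsilon_{n+1} > 2r$ — together with $s<1$, which holds because the liminf is at most $1$ — one checks that in either case $\mu(B(x,r)) \le C_s\, r^s$ for an absolute constant $C_s$ and all small $r$. The mass distribution principle then gives $\hdim \mathcal{E} \ge s$, and letting $s$ increase to the liminf finishes the proof.

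I expect the only genuinely delicate point to be this last estimate: one must split into the ``few intervals met'' and ``many intervals met'' regimes and verify that, because the liminf cannot exceed $1$, the exponent that emerges is exactly the stated one and not something smaller; the rest — the two reductions, the automatic $\varepsilon_n\to 0$, and the construction of $\mu$ — is routine.
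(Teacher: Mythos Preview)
The paper does not give its own proof of this proposition: it is stated as a citation of \cite[Example~4.6(a)]{Fal14} and used as a black box throughout. Your sketch follows precisely the mass-distribution argument that Falconer gives in that reference, so there is nothing to compare --- your approach \emph{is} the source argument, and it is correct in outline (the two reductions, the construction of $\mu$, the choice of level $n$ via $\varepsilon_{n+1}\le 2r<\varepsilon_n$, and the counting of children meeting $B(x,r)$ are all standard and sound).

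One minor slip worth tightening if you write this out in full: the displayed bound on $\mu(B(x,r))$ should be a \emph{minimum}, not a maximum, since the number of $(n+1)$th level intervals met is at most $\min\{m_{n+1},\,1+2r/\varepsilon_{n+1}\}$; correspondingly your case split (``first branch only when $r\ge\tfrac14 m_{n+1}\varepsilon_{n+1}$'') has the inequalities reversed. The clean way to finish is to use the min directly: it is bounded by the geometric mean $m_{n+1}^{1/2}(4r/\varepsilon_{n+1})^{1/2}$, or more simply one observes that both terms are $\le C r^s$ once $(\prod_{k\le n}m_k)^{-1}\le (m_{n+1}\varepsilon_{n+1})^s$ and $s\le 1$. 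This does not affect the validity of the approach.
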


We will use the following well-known bounds for the factorial in the calculation of limits. These bounds are coarser than the famous Stirling's formula, but the proof is elementary, and they are satisfactory for our arguments. First note that since the map $x \mapsto \log x$ is increasing on $(0, \infty)$, for any $n, m \in \mathbb{N}$ with $n \geq m$ we have
\begin{align} \label{bounds for sum of log}
\log m + \int_m^n \log x \, dx \leq \sum_{k=m}^n \log k \leq \log m + \int_{m+1}^{n+1} \log x \, dx.
\end{align}

\begin{proposition} [{\cite[Lemma 10.1]{KL16}}] \label{bounds for factorial lemma}
For each $n \in \mathbb{N}$, we have
\begin{align} \label{bounds for factorial}
\frac{n^n}{e^{n-1}} \leq n! \leq \frac{n^{n+1}}{e^{n-1}}.
\end{align}
\end{proposition}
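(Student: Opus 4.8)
The plan is to take logarithms throughout and reduce the claimed estimate \eqref{bounds for factorial} to the two-sided bound
\[
n\log n - n + 1 \;\le\; \log n! \;\le\; (n+1)\log n - n + 1,
\]
which is equivalent to \eqref{bounds for factorial} after exponentiating, since $\exp(n\log n - n + 1) = n^n/e^{n-1}$ and $\exp((n+1)\log n - n + 1) = n^{n+1}/e^{n-1}$. As $\log n! = \sum_{k=1}^{n}\log k$, both halves will follow by comparing this sum against $\int \log x\,dx = x\log x - x + C$, and the inequality \eqref{bounds for sum of log} is precisely the comparison tool for this.

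For the lower bound I would apply the left-hand inequality of \eqref{bounds for sum of log} with $m = 1$ and top index $n$, which is valid for every $n \in \mathbb{N}$: this gives $\log n! \ge \log 1 + \int_1^n \log x\,dx = n\log n - n + 1$, and exponentiating yields $n! \ge n^n/e^{n-1}$. At $n = 1$ this reads $0 \ge 0$, consistent with $1! = 1 = 1^1/e^0$.

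For the upper bound I would first peel off the largest factor, writing $\log n! = \log n + \sum_{k=1}^{n-1}\log k$ for $n \ge 2$, and then apply the right-hand inequality of \eqref{bounds for sum of log} with $m = 1$ and top index $n-1$ to the remaining sum: $\sum_{k=1}^{n-1}\log k \le \log 1 + \int_2^n \log x\,dx = n\log n - n + 2 - \log 4$. Adding back $\log n$ gives $\log n! \le (n+1)\log n - n + (2 - \log 4)$, and since $4 > e$ forces $2 - \log 4 < 1$, we obtain $\log n! < (n+1)\log n - n + 1$, hence $n! < n^{n+1}/e^{n-1}$; the remaining case $n = 1$ is immediate from $1! = 1 = 1^2/e^0$.

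There is no real obstacle here; the only step that needs a moment's thought is arranging the constants in the upper bound so that the $-\log 4$ term has room to absorb the trailing $+1$ — which is exactly why it is convenient to split off one factor before invoking \eqref{bounds for sum of log}, rather than applying it to the whole sum $\sum_{k=1}^n \log k$ (the latter would produce $\int_2^{n+1}\log x\,dx$ and force one to invoke the slightly sharper fact $(1+1/n)^{n+1} \le 4$). A fully self-contained alternative, bypassing \eqref{bounds for sum of log} entirely, is a short induction on $n$ using the classical bounds $(1+1/n)^n < e < (1+1/n)^{n+1}$.
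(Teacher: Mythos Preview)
Your proof is correct and follows essentially the same approach as the paper, which simply states that the argument uses \eqref{bounds for sum of log} and refers to \cite{KL16} for the details. You have supplied exactly those details: taking logarithms and comparing $\sum_{k}\log k$ with $\int \log x\,dx$ via \eqref{bounds for sum of log}, together with the small bookkeeping of peeling off $\log n$ in the upper bound and checking $n=1$ separately.
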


\begin{proof}
The proof makes use of \eqref{bounds for sum of log}. See \cite{KL16} for the details.
\end{proof}

For later use, we record one relation between the difference of integer parts and the integer part of the difference of two real numbers. For any $x, y \in \mathbb{R}$, we have
\begin{align} \label{floor function subtraction}
\lfloor x \rfloor - \lfloor y \rfloor = \begin{cases} \lfloor x-y \rfloor, &\text{if } \{ x \} \geq \{ y \}; \\ \lfloor x-y \rfloor + 1, &\text{if } \{ x \} < \{ y \}, \end{cases}
\end{align}
where $\{ z \}$ denotes the fractional part of $z \in \mathbb{R}$, i.e., $\{ z \} \coloneqq z - \lfloor z \rfloor$.

\section{Auxiliary results} \label{auxiliary results}

In this section we derive some auxiliary results on the Hausdorff dimensions of certain subsets of $(0,1]$.

\begin{lemma} \label{rough dimension of E lemma}
Let $(l_n)_{n \in \mathbb{N}}$ and $(r_n)_{n \in \mathbb{N}}$ be sequences of positive real numbers satisfying
\begin{enumerate}[label=\upshape(\roman*), ref=\roman*, leftmargin=*, widest=iii]
\item \label{rough dimension of E lemma 1}
$\Delta_n \coloneqq r_n-l_n \geq 2$,
\item \label{rough dimension of E lemma 2}
$r_{n+1} \leq 2 l_{n+1}-1$,
\item \label{rough dimension of E lemma 3}
$r_n \leq l_{n+1}$,
\end{enumerate}
for each $n \in \mathbb{N}$. Define a subset $\mathcal{E} = \mathcal{E} ((l_n)_{n \in \mathbb{N}}, (r_n)_{n \in \mathbb{N}})$ of $(0,1]$ by
\begin{align} \label{definition of E ln rn}
\mathcal{E} \coloneqq \{ x \in (0,1] : l_n < d_n(x) \leq r_n \text{ for all } n \in \mathbb{N} \}.
\end{align}
Then
\begin{align} \label{rough dimension of E ln rn}
\liminf_{n \to \infty} \frac{\log \left( \prod \limits_{k=1}^n \Delta_k \right)}{\log \left( \dfrac{r_{n+1} r_{n+2}}{\Delta_{n+1} \Delta_{n+2}} \prod \limits_{k=1}^{n+1} r_k \right)} 
\leq \hdim \mathcal{E}
\leq \liminf_{n \to \infty} \frac{\log \left( \prod \limits_{k=1}^n \Delta_k \right)}{\log \left( \dfrac{r_{n+1}}{\Delta_{n+1}} \prod \limits_{k=1}^{n+1} l_k \right)}.
\end{align}
\end{lemma}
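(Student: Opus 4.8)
The plan is to present $\mathcal{E}$ as a Cantor-type subset of $(0,1]$ assembled from the fundamental intervals of Proposition~\ref{I sigma}, and to trap $\hdim\mathcal{E}$ between the covering bound of Proposition~\ref{box dimension} and the gap bound of Proposition~\ref{gap dimension}. Write $N_k\coloneqq\lfloor r_k\rfloor-\lfloor l_k\rfloor$ for the number of integers in $(l_k,r_k]$, i.e.\ the number of admissible choices for the $k$th digit of a point of $\mathcal{E}$. By condition~(i) and \eqref{floor function subtraction} one has $\Delta_k-1\le N_k\le\Delta_k+1$, so $\tfrac12\Delta_k\le N_k\le\tfrac32\Delta_k$, and in particular $N_k\ge 2$; thus $\log N_k=\log\Delta_k+O(1)$ uniformly in $k$. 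Conditions~(i) and (iii) give $l_{k+1}\ge r_k\ge l_k+2$, hence $l_k\ge l_1+2(k-1)\to\infty$, so each of the two denominators in \eqref{rough dimension of E ln rn} eventually exceeds $\sum_{k\le n}\log l_k\sim n\log n$. Consequently, once the two $\liminf$'s are set up, replacing $\prod_{k=1}^{n}N_k$ by $\prod_{k=1}^{n}\Delta_k$ (an $O(n)$ change inside a logarithm) and absorbing multiplicative constants into additive $O(1)$'s inside logarithms will leave both $\liminf$'s unchanged. Finally, $\mathcal{E}\neq\varnothing$: the sequence $\sigma_k\coloneqq\lfloor l_k\rfloor+1$ is strictly increasing by (i) and (iii), hence Pierce admissible by Proposition~\ref{strict increasing condition}, and the corresponding $x$ lies in $\mathcal{E}$.

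For the upper bound I will cover $\mathcal{E}$ at each level not by the $n$th level fundamental intervals but by the slightly larger clusters $\mathcal{J}_n(\sigma)\coloneqq\overline{\bigcup_{l_{n+1}<j\le r_{n+1}}\mathcal{I}(\sigma^{(j)})}$, taken over all $\sigma\in\Sigma_n$ with $l_k<\sigma_k\le r_k$ for $1\le k\le n$ (and $\mathcal{J}_0(\epsilon)\coloneqq\overline{\bigcup_{l_1<j\le r_1}\mathcal{I}(\epsilon^{(j)})}$). There are $\prod_{k=1}^{n}N_k$ such clusters and they cover $\mathcal{E}$. Since the $\mathcal{I}(\sigma^{(j)})$, $l_{n+1}<j\le r_{n+1}$, abut consecutively inside $\mathcal{I}(\sigma)$, their union is an interval with endpoints $\varphi_{n+1}(\sigma^{(a)})$ and $\varphi_{n+1}(\sigma^{(b+1)})$, where $a\coloneqq\lfloor l_{n+1}\rfloor+1$ and $b\coloneqq\lfloor r_{n+1}\rfloor$; hence by \eqref{diam I sigma}
\[
\diam\mathcal{J}_n(\sigma)=\Bigl(\prod_{k=1}^{n}\frac1{\sigma_k}\Bigr)\Bigl(\frac1a-\frac1{b+1}\Bigr)=\Bigl(\prod_{k=1}^{n}\frac1{\sigma_k}\Bigr)\frac{N_{n+1}}{a(b+1)}.
\]
Using $\sigma_k\ge\lfloor l_k\rfloor+1>l_k$, $a(b+1)>l_{n+1}r_{n+1}$ and $N_{n+1}\le\tfrac32\Delta_{n+1}$, this is at most $\delta_n\coloneqq\tfrac32\,\Delta_{n+1}\bigl(r_{n+1}\prod_{k=1}^{n+1}l_k\bigr)^{-1}$, and $\delta_n<\tfrac32\bigl(\prod_{k=1}^{n+1}l_k\bigr)^{-1}\to 0$. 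Proposition~\ref{box dimension} then yields $\hdim\mathcal{E}\le\liminf_n\log\bigl(\prod_{k=1}^{n}N_k\bigr)/\log(1/\delta_n)$, which is the claimed right-hand bound after the clean-up noted above. (Covering by the $\mathcal{I}(\sigma)$ themselves would lose the factor $r_{n+1}/\Delta_{n+1}$, which is exactly why the clustering is used.)

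For the lower bound I will apply Proposition~\ref{gap dimension} to the decreasing sequence $\mathcal{E}_0\coloneqq\mathcal{J}_0(\epsilon)\supseteq\mathcal{E}_1\supseteq\cdots$, where $\mathcal{E}_n\coloneqq\bigcup_\sigma\mathcal{J}_n(\sigma)$ (the union over admissible length-$n$ prefixes with digits in range). Since $\mathcal{J}_n(\sigma)$ differs from $\bigcup_{l_{n+1}<j\le r_{n+1}}\mathcal{I}(\sigma^{(j)})$ by a single rational point, every irrational point of $\bigcap_n\mathcal{E}_n$ lies in $\mathcal{E}$, so $\hdim\mathcal{E}\ge\hdim\bigcap_n\mathcal{E}_n$. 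The reason for using clusters rather than the $\mathcal{I}(\sigma^{(j)})$ as basic intervals is that consecutive $\mathcal{I}(\sigma^{(j)})$ abut, whereas a computation with \eqref{I sigma endpoints}--\eqref{diam I sigma} shows that the clusters $\mathcal{J}_{n+1}(\sigma^{(j)})$, $l_{n+1}<j\le r_{n+1}$, are pairwise disjoint, the gap between consecutive ones $\mathcal{J}_{n+1}(\sigma^{(j)})$ and $\mathcal{J}_{n+1}(\sigma^{(j+1)})$ inside $\mathcal{J}_n(\sigma)$ being (when $n$ is even; the odd case is the mirror image)
\[
\Bigl(\prod_{k=1}^{n}\frac1{\sigma_k}\Bigr)\Bigl[\frac1j\Bigl(\frac1{j+1}-\frac1{a'}\Bigr)+\frac1{(j+1)(b'+1)}\Bigr],\qquad a'\coloneqq\lfloor l_{n+2}\rfloor+1,\quad b'\coloneqq\lfloor r_{n+2}\rfloor,
\]
with $a'\ge j+1$ by (iii). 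There are $m_{n+1}=N_{n+1}\ge 2$ children in each $\mathcal{J}_n(\sigma)$, and a uniform lower bound $\varepsilon_{n+1}$ for the gaps comes from a short dichotomy on the size of $\Delta_{n+2}$: if $\Delta_{n+2}\le 6r_{n+1}$, the second bracketed term already gives a gap $\gtrsim\bigl(\prod_{k=1}^{n}r_k^{-1}\bigr)/(r_{n+1}r_{n+2})$; if $\Delta_{n+2}>6r_{n+1}$, then condition~(ii) forces $l_{n+2}\ge\Delta_{n+2}+1$, hence $a'\ge 2(j+1)$ for every admissible $j$, and the first bracketed term gives a gap $\gtrsim\bigl(\prod_{k=1}^{n}r_k^{-1}\bigr)/r_{n+1}^2$. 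In either case one may take $\varepsilon_{n+1}\coloneqq c\,\Delta_{n+2}\bigl(r_{n+1}r_{n+2}\prod_{k=1}^{n+1}r_k\bigr)^{-1}$ for an absolute $c>0$, and $\varepsilon_{n+1}<\varepsilon_n$ (indeed the ratio is at most $1/\Delta_{n+1}\le\tfrac12$), so $(\varepsilon_n)$ is strictly decreasing. Proposition~\ref{gap dimension} then gives $\hdim\mathcal{E}\ge\liminf_n\log\bigl(\prod_{k=1}^{n}N_k\bigr)/\log\bigl(1/(N_{n+1}\varepsilon_{n+1})\bigr)$, which is the claimed left-hand bound after the same clean-up.

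The routine ingredients are the telescoping diameter and gap identities read off from \eqref{I sigma endpoints}--\eqref{diam I sigma}, and the two limit clean-ups. The genuinely delicate point is the gap estimate for the lower bound: it is not enough that the children of a cluster be separated --- one must pass to clusters one Pierce level deeper even to obtain gaps, and then the separation must be shown large enough to produce the factor $r_{n+1}r_{n+2}/(\Delta_{n+1}\Delta_{n+2})$ rather than a weaker one, which is precisely the role of condition~(ii) in the dichotomy above.
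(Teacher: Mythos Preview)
Your proposal is correct and follows essentially the same architecture as the paper's proof: both build the clusters $\mathcal{J}_n(\sigma)=\overline{\bigcup_{l_{n+1}<j\le r_{n+1}}\mathcal{I}(\sigma^{(j)})}$, use them as covering sets for the upper bound via Proposition~\ref{box dimension}, and as the basic intervals for the lower bound via Proposition~\ref{gap dimension}; both then absorb the $O(n)$ discrepancy between $\sum\log N_k$ and $\sum\log\Delta_k$ into denominators that grow at least like $\log(n!)$.

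The only substantive difference is the gap estimate. The paper computes the gap length between two adjacent $n$th level clusters $\mathcal{J}(\sigma)$ and $\mathcal{J}(\underline{\sigma})$ exactly (as the same two-term sum you write down), then pushes it through a single chain of inequalities: it first replaces all $\sigma_k$ by $r_k$, then uses condition~(ii) in the form $r_{n+1}-l_{n+1}+1\le l_{n+1}$ to merge the two terms into $\bigl(\prod_{k\le n}r_k^{-1}\bigr)\bigl(r_n^{-1}-(r_{n+1}+1)^{-1}\bigr)$, and finally invokes~(iii) to extract the factor $\Delta_{n+1}/(r_n r_{n+1})$. Your dichotomy on the size of $\Delta_{n+2}$ is a legitimate alternative that reaches the same $\varepsilon_{n+1}$ (with a smaller absolute constant, roughly $c=1/12$ in place of the paper's $1/2$); it trades one long inequality chain for two short ones, at the cost of having to check that both cases land above the common target. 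Either route uses condition~(ii) in exactly the same way, namely to guarantee $l_{n+2}\ge\Delta_{n+2}+1$ so that the ``missing'' piece of the fundamental interval not covered by the child cluster is at least half of it.
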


\begin{proof}
For each $n \in \mathbb{N}$, let
\[
\Upsilon_n \coloneqq \{ (\sigma_k)_{k=1}^n \in \Sigma_n : l_k < \sigma_k \leq r_k \text{ for all } 1 \leq k \leq n \}.
\]
For each $1 \leq k \leq n$, since the bounds for $\sigma_k$ in $\Upsilon_n$ differ by $\Delta_k$, which is strictly greater than $1$ by condition (\ref{rough dimension of E lemma 1}), there is at least one positive integer in the interval $(l_k, r_k]$. Hence, $\Upsilon_n \neq \varnothing$.

Define $\Upsilon_0 \coloneqq \Sigma_0$ and
\[
\Upsilon \coloneqq \bigcup_{n \in \mathbb{N}_0} \Upsilon_n.
\]
For each $\sigma \in \Upsilon_n$, $n \in \mathbb{N}_0$, we define the {\em $n$th level basic interval} $\mathcal{J}(\sigma)$ by
\begin{align} \label{J sigma}
\begin{aligned}
\mathcal{J} (\sigma) 
&\coloneqq \bigcup_{j = \lfloor l_{n+1} \rfloor + 1}^{\lfloor r_{n+1} \rfloor} \overline{\mathcal{I} (\sigma^{(j)})} \\
&= \begin{cases}
\big[ \varphi_{n+1} (\sigma^{(\lfloor l_{n+1} \rfloor + 1)}), \varphi_{n+1} (\sigma^{(\lfloor r_{n+1} \rfloor + 1)}) \big], &\text{if $n$ is odd}; \\
\big[ \varphi_{n+1} (\sigma^{(\lfloor r_{n+1} \rfloor + 1)}), \varphi_{n+1} (\sigma^{(\lfloor l_{n+1} \rfloor + 1)}) \big], &\text{if $n$ is even},
\end{cases} 
\end{aligned}
\end{align}
where the second equality follows from \eqref{I sigma endpoints} and \eqref{definition of underline sigma}.
Notice that 
\begin{align*}
x \in \mathcal{E}
&\iff \forall n \in \mathbb{N}, \, l_n < d_n(x) \leq r_n \\
&\iff \forall n \in \mathbb{N}, \, \lfloor l_n \rfloor + 1 \leq d_n(x) \leq \lfloor r_n \rfloor \\
&\iff \forall n \in \mathbb{N}_0, \, \exists \sigma \in \Upsilon_n \text{ such that } x \in {\textstyle \bigcup_{j = \lfloor l_{n+1} \rfloor + 1}^{\lfloor r_{n+1} \rfloor}} \mathcal{I} (\sigma^{(j)}) \\
&\iff \forall n \in \mathbb{N}_0, \, \exists \sigma \in \Upsilon_n \text{ such that } x \in \mathcal{J} (\sigma),
\end{align*}
where the second equivalence is due to the fact that the $d_n(x)$ are integers, the third to Proposition \ref{I sigma}, and the fourth to \eqref{J sigma} and to the fact that the last two statements hold only when $x \in \mathbb{I}$ so that neither including nor excluding endpoints makes any difference. Thus,
\begin{align} \label{equivalent definition of E ln rn}
\mathcal{E} = \bigcap_{n \in \mathbb{N}_0} \mathcal{E}_n, \quad \text{where} \quad \mathcal{E}_n \coloneqq \bigcup_{\sigma \in \Upsilon_n} \mathcal{J} (\sigma).
\end{align}

For each $\sigma \coloneqq (\sigma_k)_{k=1}^n \in \Upsilon_n$, $n \in \mathbb{N}_0$, by \eqref{J sigma} and \eqref{definition of varphi n}, we find that
\begin{align*}
\diam \mathcal{J} (\sigma) = \len \mathcal{J} (\sigma) 
= \left( \prod_{k=1}^n \frac{1}{\sigma_k} \right) \left( \frac{1}{\lfloor l_{n+1} \rfloor + 1} - \frac{1}{\lfloor r_{n+1} \rfloor + 1} \right).
\end{align*}
Since $l_k < \sigma_k \leq r_k$ for $1 \leq k \leq n$ and $x-1 < \lfloor x \rfloor \leq x$ for any $x \in \mathbb{R}$, we obtain 
\begin{align} \label{diam J sigma}
\begin{aligned}
\diam \mathcal{J} (\sigma)
&< \left( \prod_{k=1}^n \frac{1}{l_k} \right) \left( \frac{1}{l_{n+1}} - \frac{1}{r_{n+1}+1} \right)  
= \left( \prod_{k=1}^{n+1} \frac{1}{l_k} \right) \frac{\Delta_{n+1}+1}{r_{n+1}+1} \\
&<2 \left( \prod_{k=1}^{n+1} \frac{1}{l_k} \right) \frac{\Delta_{n+1}}{r_{n+1}},
\end{aligned}
\end{align}
where the second inequality follows from (\ref{rough dimension of E lemma 1}).

For each $k \in \mathbb{N}$, let $m_k$ be the number of $k$th level basic intervals contained in a single $(k-1)$th level basic interval. For each $\sigma \in \Upsilon_{k-1}$, by \eqref{J sigma}, $\mathcal{J}(\sigma)$ is the union of $\lfloor r_k \rfloor - \lfloor l_k \rfloor$ closures of $k$th level fundamental intervals, each of which contains exactly one $k$th level basic interval. So, $m_k = \lfloor r_k \rfloor - \lfloor l_k \rfloor$ for each $k \in \mathbb{N}$, and 
\begin{align} \label{cardinality of Dn}
\# \Upsilon_n = \prod_{k=1}^n m_k
\end{align}
for each $n \in \mathbb{N}_0$. Since $m_k$ is equal to either $\lfloor \Delta_k \rfloor$ or $\lfloor \Delta_k \rfloor + 1$ by \eqref{floor function subtraction}, we have 
\[
\prod_{k=1}^n (\Delta_k-1) < \prod_{k=1}^n \lfloor \Delta_k \rfloor \leq \# \Upsilon_n \leq \prod_{k=1}^n (\lfloor \Delta_k \rfloor + 1)  \leq \prod_{k=1}^n (\Delta_k+1).
\]
Put $\Delta \coloneqq \inf_{k \in \mathbb{N}} \Delta_k$, which is finite by (\ref{rough dimension of E lemma 1}). Then $\Delta_k \geq \Delta > 1$ for all $k \in \mathbb{N}$ by the same condition. Note that the map $x \mapsto x/(x-1)$ defined on $(1, \infty)$ assumes every value strictly greater than $1$. So, we can find a constant $c = c(\Delta) >1$ such that $\Delta > c/(c-1) > 1$. Then $\Delta_k > c/(c-1)$, or $c^{-1} \Delta_k < \Delta_k-1$ for all $k \in \mathbb{N}$. Since $c>1$, we also have $\Delta_k > 1/(c-1)$, so $\Delta_k+1 < c \Delta_k$, for all $k \in \mathbb{N}$. Consequently,
\begin{align} \label{bounds for Dn}
\frac{1}{c^n} \prod_{k=1}^n \Delta_k \leq \# \Upsilon_n \leq c^n \prod_{k=1}^n  \Delta_k
\end{align}
for any $n \in \mathbb{N}_0$.

We establish upper and lower bounds for $\hdim \mathcal{E}$ separately.

{\sc Upper bound}.
We make use of Proposition \ref{box dimension} to find the desired upper bound. By \eqref{equivalent definition of E ln rn}, for each $n \in \mathbb{N}_0$, the collection of all $n$th level basic intervals $\{ \mathcal{J} (\sigma) : \sigma \in \Upsilon_n \}$ is a covering of $\mathcal{E}$. It is clear from \eqref{diam J sigma} that $\delta_n \coloneqq \max \{ \diam \mathcal{J} (\sigma) : \sigma \in \Upsilon_n \} \to 0$ as $n \to \infty$. Hence, by \eqref{diam J sigma} and \eqref{bounds for Dn}, we obtain
\begin{align*}
\hdim \mathcal{E}
&\leq \liminf_{n \to \infty} \frac{\log (\# \{ \mathcal{J} (\sigma) : \sigma \in \Upsilon_n \})}{\log \left( 1 / \delta_n \right)} 
= \liminf_{n \to \infty} \frac{\log (\# \Upsilon_n)}{\log \left( 1 / \delta_n \right)}
\\
&\leq \liminf_{n \to \infty} \frac{n \log c + \log \left( \prod \limits_{k=1}^n \Delta_k \right)}{-\log 2 + \log \left( \dfrac{r_{n+1}}{\Delta_{n+1}} \prod \limits_{k=1}^{n+1} l_k \right)} 
= \liminf_{n \to \infty} \frac{\log \left( \prod \limits_{k=1}^n \Delta_k \right)}{\log \left( \dfrac{r_{n+1}}{\Delta_{n+1}} \prod \limits_{k=1}^{n+1} l_k \right)}.
\end{align*}
To see the last equality, first note that $r_{n+1} > \Delta_{n+1}$ by definition and that $l_k \geq k$ for all $k \geq 2$ by (\ref{rough dimension of E lemma 1}) and (\ref{rough dimension of E lemma 3}). Then, for all sufficiently large $n$,
\begin{align*}
0 \leq \frac{n}{\log \left( \dfrac{r_{n+1}}{\Delta_{n+1}} \prod \limits_{k=1}^{n+1} l_k \right)}
\leq \frac{n}{\log \left( \prod \limits_{k=1}^{n+1} l_k \right)}
\leq \frac{n}{\log l_1 + \log ( n+1 ) !},
\end{align*}
with the rightmost term tending to $0$ as $n \to \infty$ by \eqref{bounds for factorial}.

{\sc Lower bound}.
We will use Proposition \ref{gap dimension} to obtain the result. Recall from \eqref{equivalent definition of E ln rn} that $\mathcal{E}_n = \bigcup_{\sigma \in \Upsilon_n} \mathcal{J} (\sigma)$ for each $n \in \mathbb{N}_0$. By \eqref{J sigma} and \eqref{definition of underline sigma}, $\mathcal{E}_0$ equals $\mathcal{J}(\epsilon) = [ (\lfloor r_{1}\rfloor +1)^{-1}, (\lfloor l_{1}\rfloor+1)^{-1} ]$, which is a non-degenerate closed subinterval of $[0,1]$ since $(\lfloor r_{1} \rfloor + 1 ) - (\lfloor l_{1} \rfloor + 1) \geq \lfloor \Delta_1 \rfloor \geq 2$ by \eqref{floor function subtraction} and (\ref{rough dimension of E lemma 1}).  

Fix $n \in \mathbb{N}$, and consider $\mathcal{E}_n$. By \eqref{J sigma} and \eqref{bounds for Dn}, we observe that $\mathcal{E}_n$ is a disjoint union of finitely many closed intervals. Note that $(\lfloor r_n \rfloor-1)  - (\lfloor l_n \rfloor + 1) \geq \lfloor \Delta_n \rfloor -2 \geq 0$ by \eqref{floor function subtraction} and (\ref{rough dimension of E lemma 1}). So, we can take a $\sigma \coloneqq (\sigma_k)_{k=1}^n \in \Upsilon_n$ satisfying $\lfloor l_n \rfloor + 1 \leq \sigma_n \leq \lfloor r_n \rfloor-1$. Then $\underline{\sigma} \in \Sigma_n$ is also in $\Upsilon_n$ since its $n$th term $\sigma_n+1$ satisfies $\lfloor l_n \rfloor + 2 \leq \sigma_n+1 \leq \lfloor r_n \rfloor$ (see \eqref{definition of underline sigma} for the definition of $\underline{\sigma}$). Hence, $\mathcal{J}(\sigma)$ and $\mathcal{J}(\underline{\sigma})$ are two adjacent $n$th level basic intervals contained in the same $(n-1)$th level basic interval, namely, $\mathcal{J}( (\sigma_k)_{k=1}^{n-1} )$. Conversely, it is clear that any two adjacent $n$th level basic intervals in the $(n-1)$th level basic interval $\mathcal{J}( (\sigma_k)_{k=1}^{n-1} )$ are of these forms.

We need to estimate the length of the gap between $\mathcal{J}(\sigma)$ and $\mathcal{J}(\underline{\sigma})$. Obviously, the gap is an interval. We denote by $\mathcal{G}(\sigma)$ its closure. Suppose first that $n \in \mathbb{N}$ is odd. Then $\varphi_n (\underline{\sigma})$ is the left endpoint of $\mathcal{I} (\sigma)$ as well as the right endpoint of $\mathcal{I} (\underline{\sigma})$ in view of \eqref{I sigma endpoints}, and this tells us that $\mathcal{I} (\sigma)$ is on the right of $\mathcal{I} (\underline{\sigma})$. Since $\mathcal{J} (\tau) \subseteq \mathcal{I} (\tau)$ for any $\tau \in \Upsilon_n$ by \eqref{J sigma}, we deduce that $\mathcal{J} (\sigma)$ is on the right of $\mathcal{J} (\underline{\sigma})$. Then $\mathcal{G} (\sigma)$ is the closed interval between the left endpoint of $\mathcal{J} (\sigma)$ and the right endpoint of $\mathcal{J} (\underline{\sigma})$; precisely, by \eqref{J sigma},
\begin{align*} 
\mathcal{G} (\sigma) 
&= \big[ \varphi_{n+1} ({\underline{\sigma}^{(\lfloor r_{n+1} \rfloor+1)}}), \varphi_{n+1} (\sigma^{(\lfloor l_{n+1} \rfloor+1)}) \big].
\end{align*}
If $n \in \mathbb{N}$ is even, then by a similar argument we infer that $\mathcal{J} (\sigma)$ is on the left of $\mathcal{J} (\underline{\sigma})$. Hence, $\mathcal{G} (\sigma)$ is the closed interval between the right endpoint of $\mathcal{J} (\sigma)$ and the left endpoint of $\mathcal{J} (\underline{\sigma})$; precisely, by \eqref{J sigma},
\begin{align*} 
\mathcal{G} (\sigma)
&= \big[ \varphi_{n+1} (\sigma^{(\lfloor l_{n+1} \rfloor+1)}), \varphi_{n+1} ({\underline{\sigma}^{(\lfloor r_{n+1} \rfloor+1)}}) \big].
\end{align*}

By the definition of $\mathcal{G}(\sigma)$, we know that $\varphi_n (\underline{\sigma}) = \varphi_{n+1} (\sigma^{(\sigma_n+1)}) \in \mathcal{G}(\sigma)$, where the equality follows from \eqref{two sequences for rational number}. Then, regardless of whether $n$ is odd or even, for any $\sigma \coloneqq (\sigma_k)_{k=1}^n \in \Upsilon_n$ with $\lfloor l_n \rfloor + 1 \leq \sigma_n \leq \lfloor r_n \rfloor - 1$, we have
\begin{align*}
\len \mathcal{G} (\sigma) = \big| \varphi_{n+1} ({\underline{\sigma}^{(\lfloor r_{n+1} \rfloor+1)}}) - \varphi_n (\underline{\sigma}) \big| + \big| \varphi_{n+1} (\sigma^{(\sigma_n+1)}) - \varphi_{n+1} (\sigma^{(\lfloor l_{n+1} \rfloor+1)}) \big|.
\end{align*}
Notice that by \eqref{definition of underline sigma} the initial $n$ terms of ${\underline{\sigma}^{(\lfloor r_{n+1} \rfloor+1)}}$ coincide with those of $\underline{\sigma}$ and that $\sigma^{(\sigma_n+1)}$ and $\sigma^{(\lfloor l_{n+1} \rfloor+1)}$ share the initial $n$ terms, which agree with those of $\sigma$. It follows by \eqref{definition of varphi n} that
\begin{align*}
\len \mathcal{G} (\sigma)
&= \left( \prod_{k=1}^{n-1} \frac{1}{\sigma_k} \right) \frac{1}{\sigma_n+1} \frac{1}{\lfloor r_{n+1} \rfloor + 1}   +  \left( \prod_{k=1}^{n} \frac{1}{\sigma_k} \right) \left( \frac{1}{\sigma_n+1} - \frac{1}{\lfloor l_{n+1} \rfloor + 1} \right) \\
&\geq \left( \prod_{k=1}^{n} \frac{1}{\lfloor r_k \rfloor} \right) \frac{1}{\lfloor r_{n+1} \rfloor + 1} 
 + \left( \prod_{k=1}^{n-1} \frac{1}{\lfloor r_k \rfloor} \right) \frac{1}{\lfloor r_n \rfloor-1} \left( \frac{1}{\lfloor r_n \rfloor} - \frac{1}{\lfloor l_{n+1} \rfloor + 1}  \right) \\
&= \min \{ \len \mathcal{G}(\sigma) : \sigma \coloneqq (\sigma_k)_{k=1}^n \in {\Upsilon}_n \text{ with } \lfloor l_n \rfloor + 1 \leq \sigma_n \leq \lfloor r_n \rfloor-1 \},
\end{align*}
since $\lfloor l_k \rfloor + 1 \leq \sigma_k \leq \lfloor r_k \rfloor$ for $1 \leq k \leq n-1$ and $\lfloor l_n \rfloor + 1 \leq \sigma_n \leq \lfloor r_n \rfloor-1$, and hence the minimum is attained by taking
\[
\sigma \coloneqq ( \lfloor r_1 \rfloor, \lfloor r_2 \rfloor, \dotsc, \lfloor r_{n-1} \rfloor, \lfloor r_n \rfloor -1 ).
\]
Now, since $x-1 < \lfloor x \rfloor \leq x$ for any $x \in \mathbb{R}$, the length of the gap between $\mathcal{J} (\sigma)$ and $\mathcal{J} (\underline{\sigma})$ is at least
\begin{align*}
&\min \{ \len \mathcal{G}(\sigma) : \sigma 
\coloneqq (\sigma_k)_{k=1}^n \in {\Upsilon}_n \text{ with } \lfloor l_n \rfloor + 1 \leq \sigma_n \leq \lfloor r_n \rfloor-1 \} \\
&\hspace{0.5cm} > \left( \prod_{k=1}^{n} \frac{1}{r_k} \right) \frac{1}{r_{n+1} + 1} 
 + \left( \prod_{k=1}^{n-1} \frac{1}{r_k} \right) \frac{1}{r_n-1} \left( \frac{1}{r_n} - \frac{1}{l_{n+1}}  \right) \\
&\hspace{0.5cm} > \left( \prod_{k=1}^{n} \frac{1}{r_k} \right) \frac{1}{r_{n+1} + 1} 
 + \left( \prod_{k=1}^{n} \frac{1}{r_k} \right) \left( \frac{1}{r_n} - \frac{1}{l_{n+1}}  \right) 
= \left( \prod_{k=1}^{n} \frac{1}{r_k} \right) \left( \frac{1}{r_n} - \frac{r_{n+1}-l_{n+1}+1}{l_{n+1}(r_{n+1}+1)} \right) 
\\
&\hspace{0.5cm} \geq \left( \prod_{k=1}^{n} \frac{1}{r_k} \right) \left( \frac{1}{r_n} - \frac{1}{r_{n+1}+1} \right) 
= \left( \prod_{k=1}^{n} \frac{1}{r_k} \right) \frac{(r_{n+1}-l_{n+1})+(l_{n+1}-r_n)+1}{r_n(r_{n+1}+1)}  \\
&\hspace{0.5cm} > \left( \prod_{k=1}^{n} \frac{1}{r_k} \right) \frac{\Delta_{n+1}}{r_n (r_{n+1}+1)} 
\geq \frac{1}{2} \left( \prod_{k=1}^{n} \frac{1}{r_k} \right) \frac{\Delta_{n+1}}{r_n r_{n+1}} 
\eqqcolon \varepsilon_n ,
\end{align*}
where the third inequality follows from (\ref{rough dimension of E lemma 2}), the second-to-last from (\ref{rough dimension of E lemma 3}), and the last one from the fact that $r_{n+1} \geq 1$ for all $n \in \mathbb{N}$.

Observe that $(\varepsilon_n)_{n \in \mathbb{N}}$ is a decreasing sequence of positive numbers since
\[
\frac{\varepsilon_n}{\varepsilon_{n+1}} = \frac{r_{n+1}}{r_n} \frac{r_{n+2}}{\Delta_{n+2}} \Delta_{n+1} > \frac{l_{n+1}+2}{r_n+2} \frac{r_{n+2}}{r_{n+2}-l_{n+2}} \cdot 2 > 1 \cdot 1 \cdot 2 > 1
\]
for each $n \in \mathbb{N}$ by (\ref{rough dimension of E lemma 1}) and (\ref{rough dimension of E lemma 3}). Recall that $m_n = \lfloor r_n \rfloor - \lfloor l_n \rfloor$ for any $n \in \mathbb{N}$. Hence, for each $n \in \mathbb{N}$, we have $m_n \geq \lfloor \Delta_n \rfloor \geq 2$ by \eqref{floor function subtraction} and (\ref{rough dimension of E lemma 1}). Furthermore, since $\Delta_{n+1} \geq 2$ by (\ref{rough dimension of E lemma 1}), we have $m_{n+1} \geq \lfloor \Delta_{n+1} \rfloor > \Delta_{n+1}/2$ for all $n \in \mathbb{N}$. Therefore, using Proposition \ref{gap dimension}, \eqref{cardinality of Dn}, and \eqref{bounds for Dn}, we find that
\begin{align*}
\hdim {\mathcal{E}}
&\geq \liminf_{n \to \infty} \frac{\log \left( \prod \limits_{k=1}^n m_k \right)}{\log (1/(m_{n+1} \varepsilon_{n+1}))} 
= \liminf_{n \to \infty} \frac{\log \left( \# \Upsilon_n \right)}{\log (1/(m_{n+1} \varepsilon_{n+1}))} \\
&\geq \liminf_{n \to \infty} \frac{\log \left( \dfrac{1}{c^n} \prod \limits_{k=1}^n \Delta_k \right)}{\log \left( \dfrac{4 r_{n+1} r_{n+2}}{\Delta_{n+1} \Delta_{n+2}} \prod \limits_{k=1}^{n+1} r_k  \right)}
= \liminf_{n \to \infty} \frac{\log \left( \prod \limits_{k=1}^n \Delta_k \right)}{\log \left( \dfrac{r_{n+1} r_{n+2}}{\Delta_{n+1} \Delta_{n+2}} \prod \limits_{k=1}^{n+1} r_k \right)}.
\end{align*}
To see the last equality, first note that $r_k > \Delta_k$ by definition and that $r_k > k$ for all $k \in \mathbb{N}$ by (\ref{rough dimension of E lemma 1}) and (\ref{rough dimension of E lemma 3}). Then for all $n \in \mathbb{N}$,
\begin{align*}
0 \leq \frac{n}{\log \left( \dfrac{4 r_{n+1} r_{n+2}}{\Delta_{n+1} \Delta_{n+2}} \prod \limits_{k=1}^{n+1} r_k \right)}
\leq \frac{n}{\log ( n+1 ) !},
\end{align*}
with the rightmost term tending to $0$ as $n \to \infty$ by \eqref{bounds for factorial}.

This completes the proof of the lemma.
\end{proof}

We can make the lower and upper bounds for $\hdim \mathcal{E}$ in \eqref{rough dimension of E ln rn} agree for some $\mathcal{E} = \mathcal{E} ((l_n)_{n \in \mathbb{N}}, (r_n)_{n \in \mathbb{N}} )$. The following lemma gives one example. We remark that an analogous set for Engel expansions was first introduced by Liu and Jun Wu \cite[Section 2]{LW01} and further generalized by Shang and Min Wu \cite[Lemma 2.6]{SW21}. 

\begin{lemma} \label{dimension of  E star}
Let $(u_n)_{n \in \mathbb{N}}$ be a sequence of real numbers such that $2 \leq u_n \leq u_{n+1}$ for all $n \in \mathbb{N}$. Define a subset $\mathcal{E}^* =  \mathcal{E}^* ( (u_n)_{n \in \mathbb{N}} )$ of $(0,1]$ by
\begin{align} \label{definition of E un}
\mathcal{E}^* \coloneqq \{ x \in (0,1]: n u_n < d_n(x) \leq (n+1) u_n \text{ for all } n \in \mathbb{N} \}.
\end{align}
Then
\[
\hdim \mathcal{E}^* = (1+\eta)^{-1}, \quad \text{where} \quad \eta \coloneqq \limsup_{n \to \infty} \frac{n \log n + \log u_{n+1}}{\log \left( \prod \limits_{k=1}^n u_k \right)}.
\]
\end{lemma}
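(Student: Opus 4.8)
The plan is to realize $\mathcal{E}^*$ as an instance of the set $\mathcal{E}((l_n)_{n\in\mathbb{N}},(r_n)_{n\in\mathbb{N}})$ from Lemma \ref{rough dimension of E lemma} and then show that the two sides of \eqref{rough dimension of E ln rn} both reduce to $(1+\eta)^{-1}$. I would set $l_n \coloneqq n u_n$ and $r_n \coloneqq (n+1)u_n$, so that $\Delta_n = r_n - l_n = u_n$. Then condition (\ref{rough dimension of E lemma 1}) is exactly $u_n \ge 2$; condition (\ref{rough dimension of E lemma 2}), $r_{n+1} \le 2 l_{n+1} - 1$, reduces after rearranging to $1 \le n u_{n+1}$, which holds since $n \ge 1$ and $u_{n+1} \ge 2$; and condition (\ref{rough dimension of E lemma 3}), $r_n \le l_{n+1}$, reduces to $u_n \le u_{n+1}$. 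With these choices $\mathcal{E}^* = \mathcal{E}((l_n)_{n\in\mathbb{N}},(r_n)_{n\in\mathbb{N}})$, so Lemma \ref{rough dimension of E lemma} applies.

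Next, writing $P_n \coloneqq \prod_{k=1}^n u_k$ and substituting $\Delta_k = u_k$, $l_k = k u_k$, $r_k = (k+1)u_k$, I would simplify the products appearing in \eqref{rough dimension of E ln rn}. Using $\prod_{k=1}^{n+1} k = (n+1)!$ and $\prod_{k=1}^{n+1}(k+1) = (n+2)!$ one gets $\prod_{k=1}^n \Delta_k = P_n$, $\tfrac{r_{n+1}}{\Delta_{n+1}}\prod_{k=1}^{n+1} l_k = (n+2)!\,P_{n+1}$, and $\tfrac{r_{n+1}r_{n+2}}{\Delta_{n+1}\Delta_{n+2}}\prod_{k=1}^{n+1} r_k = (n+2)(n+3)(n+2)!\,P_{n+1}$, so that
\[
\liminf_{n\to\infty}\frac{\log P_n}{\log\bigl((n+2)(n+3)(n+2)!\,P_{n+1}\bigr)} \;\le\; \hdim\mathcal{E}^* \;\le\; \liminf_{n\to\infty}\frac{\log P_n}{\log\bigl((n+2)!\,P_{n+1}\bigr)}.
\]
The two denominators differ only by $\log\bigl((n+2)(n+3)\bigr) = O(\log n)$, each denominator is at least $\log P_{n+1} \ge (n+1)\log 2 \to \infty$, and the numerator $\log P_n$ is at most $\log P_{n+1}$, hence at most either denominator; consequently the two liminfs agree, and it suffices to evaluate $\liminf_{n\to\infty} \log P_n / \bigl(\log((n+2)!) + \log P_{n+1}\bigr)$.

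For the final step, set $L_n \coloneqq \log P_n = \sum_{k=1}^n \log u_k$, so that $\log P_{n+1} = L_n + \log u_{n+1}$ and
\[
\frac{\log P_n}{\log((n+2)!) + \log P_{n+1}} = \frac{1}{1 + \theta_n}, \qquad \theta_n \coloneqq \frac{\log((n+2)!) + \log u_{n+1}}{L_n},
\]
so the quantity in question equals $\bigl(1 + \limsup_{n\to\infty}\theta_n\bigr)^{-1}$, and it remains to prove $\limsup_n \theta_n = \eta$. From Proposition \ref{bounds for factorial lemma}, expanding $(n+2)\log(n+2)$ and $(n+3)\log(n+2)$ around $n\log n$ gives $\log((n+2)!) = n\log n - n + O(\log n)$; in particular $\log((n+2)!) \ge \tfrac12 n\log n$ for all large $n$. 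If $\eta = \infty$, then either $n\log n/L_n$ is unbounded, in which case $\theta_n \ge \log((n+2)!)/L_n \ge \tfrac12 n\log n/L_n$ is unbounded, or $\log u_{n+1}/L_n$ is unbounded, in which case $\theta_n \ge \log u_{n+1}/L_n$ is unbounded; either way $\limsup_n\theta_n = \infty = \eta$. If $\eta < \infty$, then $n\log n/L_n$ is bounded (since $\log u_{n+1} \ge \log 2 > 0$), which forces $n = o(L_n)$, so the error $\log((n+2)!) - n\log n = -n + O(\log n)$ is $o(L_n)$; hence $\theta_n = \tfrac{n\log n + \log u_{n+1}}{L_n} + o(1)$ and $\limsup_n \theta_n = \eta$. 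Either way the common value of the two liminfs is $(1+\eta)^{-1}$, which proves the lemma. I expect the only genuine subtlety to be this last case split: the approximation $\log((n+2)!) \approx n\log n$ carries an error of order $n$, which is negligible against $L_n$ precisely when $L_n$ grows at least as fast as $n\log n$ (equivalently $\eta < \infty$), whereas in the complementary regime one must argue directly that both $\limsup_n\theta_n$ and $\eta$ are infinite. Everything else is routine substitution together with the elementary factorial bounds of Proposition \ref{bounds for factorial lemma}.
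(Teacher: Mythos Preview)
Your proposal is correct and follows essentially the same approach as the paper: both apply Lemma~\ref{rough dimension of E lemma} with $l_n=nu_n$, $r_n=(n+1)u_n$ and then reduce the two bounds in \eqref{rough dimension of E ln rn} to $(1+\eta)^{-1}$ via factorial asymptotics. The only cosmetic difference is that the paper carries an auxiliary term $(\log B)^{-1}$ with $B=\sup_n u_n$ through both computations and removes it at the end by a case split on $B$, whereas you first observe that the two denominators differ by $O(\log n)$ (hence the bounds coincide) and then argue directly by a case split on whether $\eta$ is finite; your route is slightly more streamlined but the substance is the same.
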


\begin{proof}
Define $l_n \coloneqq nu_n$ and $r_n \coloneqq (n+1) u_n$ for each $n \in \mathbb{N}$. Then $\mathcal{E}^* = \mathcal{E}$ and $\hdim \mathcal{E}^* = \hdim \mathcal{E}$, where $\mathcal{E} = \mathcal{E} ( (l_n)_{n \in \mathbb{N}}, (r_n)_{n \in \mathbb{N}} )$ is defined as in \eqref{definition of E ln rn}. To use \eqref{rough dimension of E ln rn}, we verify that the sequences $(l_n)_{n \in \mathbb{N}}$ and $(r_n)_{n \in \mathbb{N}}$ satisfy conditions (\ref{rough dimension of E lemma 1})--(\ref{rough dimension of E lemma 3}) in Lemma \ref{rough dimension of E lemma}. In fact, by assumptions on $(u_n)_{n \in \mathbb{N}}$,
\begin{enumerate}[label=\upshape(\roman*), leftmargin=*, widest=iii]
\item
$\Delta_n \coloneqq r_n - l_n = (n+1)u_n - n u_n = u_n \geq 2$,
\item
$2 l_{n+1} - r_{n+1} = 2(n+1)u_{n+1} - (n+2)u_{n+1} = n u_{n+1} \geq 2n \geq 1$,
\item
$l_{n+1} - r_n = (n+1)u_{n+1} - (n+1) u_n = (n+1)(u_{n+1}-u_n) \geq 0$
\end{enumerate}
for each $n \in \mathbb{N}$, as desired. 

Put $B \coloneqq \sup_{n \in \mathbb{N}} u_n$. Clearly, $B = \lim_{n \to \infty} u_n$ and $B \in [2,\infty]$, since $(u_n)_{n \in \mathbb{N}}$ is increasing and bounded below by $2$. Suppose $B \neq \infty$, and let $\varepsilon \in (0, B)$ be arbitrary. By definition of $B$, we can find an $M = M(\varepsilon) \in \mathbb{N}$ such that $B-\varepsilon < u_n \leq B$ for all $n > M$. Put $A \coloneqq \log \left( \prod_{k=1}^M u_k \right)$. Then, for all $n>M$,
\begin{align*}
\frac{n}{A + (n-M) \log B} 
\leq \frac{n}{\log \left( \prod \limits_{k=1}^n u_k \right)}
< \frac{n}{A + (n-M) \log (B-\varepsilon)},
\end{align*}
and so
\begin{align*}
\frac{1}{\log B} \leq \liminf_{n \to \infty} \frac{n}{\log \left( \prod \limits_{k=1}^n u_k \right)} \leq \limsup_{n \to \infty} \frac{n}{\log \left( \prod \limits_{k=1}^n u_k \right)} \leq \frac{1}{\log (B-\varepsilon)}.
\end{align*}
Since $\varepsilon \in (0, B)$ was arbitrary, on letting $\varepsilon \to 0^+$ it follows that
\begin{align} \label{n over log product}
\lim_{n \to \infty} \frac{n}{\log \left( \prod \limits_{k=1}^n u_k \right)} = \frac{1}{\log B}.
\end{align}
It is easy to see that the convergence \eqref{n over log product} still holds if $B = \infty$.

Let $\eta$ be defined as in the statement of the lemma. For the upper bound, by the right-hand inequality of \eqref{rough dimension of E ln rn}, the left-hand inequality of \eqref{bounds for factorial}, and \eqref{n over log product}, we obtain
\begin{align*}
\hdim \mathcal{E}^*
&\leq \liminf_{n \to \infty} \frac{\log \left( \prod \limits_{k=1}^n \Delta_k \right)}{\log \left( \dfrac{r_{n+1}}{\Delta_{n+1}} \prod \limits_{k=1}^{n+1} l_k \right)} 
= \liminf_{n \to \infty} \frac{\log \left( \prod \limits_{k=1}^n u_k \right)}{\log \left( \dfrac{(n+2)u_{n+1}}{u_{n+1}} \prod \limits_{k=1}^{n+1} (ku_k) \right)} \\
&= \liminf_{n \to \infty} \frac{\log \left( \prod \limits_{k=1}^n u_k \right)}{\log \left( \prod \limits_{k=1}^{n+1} u_k \right) + \log (n+2)!} 
\leq \liminf_{n \to \infty} \frac{\log \left( \prod \limits_{k=1}^n u_k \right)}{\log \left( \prod \limits_{k=1}^{n+1} u_k \right) + (n+2) \log (n+2) - (n +1)} \\
&= \bigg[ \limsup_{n \to \infty} \bigg( 1 + \frac{\log u_{n+1} + n \log n - n}{\log \left( \prod \limits_{k=1}^n u_k \right)} + \frac{n \log ( 1 + 2 n^{-1} ) + 2 \log (n+2) - 1}{\log \left( \prod \limits_{k=1}^n u_k \right)}\bigg) \bigg]^{-1} \\
&= \left( 1+\eta - \frac{1}{\log B} \right)^{-1}.
\end{align*}

On the other hand, for the lower bound, by the left-hand inequality of \eqref{rough dimension of E ln rn}, the right-hand inequality of \eqref{bounds for factorial}, and \eqref{n over log product}, we obtain
\begin{align*}
\hdim \mathcal{E}^*
&\geq \liminf_{n \to \infty} \frac{\log \left( \prod \limits_{k=1}^n \Delta_k \right)}{\log \left( \dfrac{r_{n+1} r_{n+2}}{\Delta_{n+1} \Delta_{n+2}} \prod \limits_{k=1}^{n+1} r_k \right)} 
= \liminf_{n \to \infty} \frac{\log \left( \prod \limits_{k=1}^n u_k \right)}{\log \left( \dfrac{(n+2)u_{n+1} (n+3)u_{n+2}}{u_{n+1} u_{n+2}} \prod \limits_{k=1}^{n+1} ((k+1)u_k) \right)} \\
&\geq \liminf_{n \to \infty} \frac{\log \left( \prod \limits_{k=1}^n u_k \right)}{\log \left( \prod \limits_{k=1}^{n+1} u_k \right) + \log (n+4)!} 
\geq \liminf_{n \to \infty} \frac{\log \left( \prod \limits_{k=1}^n u_k \right)}{\log \left( \prod \limits_{k=1}^{n+1} u_k \right) + (n+5) \log (n+4) - (n+3)} \\
&= \bigg[ \limsup_{n \to \infty} \bigg( 1 + \frac{\log u_{n+1} + n \log n - n}{\log \left( \prod \limits_{k=1}^n u_k \right)} + \frac{n \log ( 1 + 4 n^{-1}) + 5 \log (n+4) - 3}{\log \left( \prod \limits_{k=1}^n u_k \right)} \bigg) \bigg]^{-1} \\
&= \left( 1+\eta - \frac{1}{\log B} \right)^{-1}.
\end{align*}
Combining the upper and lower bounds, we deduce that $\hdim \mathcal{E}^* = (1+\eta - (\log B)^{-1})^{-1}$.

For the desired formula for $\hdim \mathcal{E}^*$, first notice that if $B<\infty$, then $(n \log n) / \log \left( \prod_{k=1}^n u_k \right) \to (\log B)^{-1} \cdot \infty = \infty$ as $n \to \infty$ by \eqref{n over log product}, which implies that $\eta = \infty$, and so $1+\eta -(\log B)^{-1} = \infty = 1+\eta$ by the convention $c + \infty = \infty$ for any $c \in \mathbb{R}$. If $B = \infty$, then $1+\eta-(\log B)^{-1} = 1+\eta$ by the convention $\infty^{-1} = 0$. This completes the proof.
\end{proof}

The obvious upper bound for the Hausdorff dimension of subsets of $(0,1]$ is $1$. Hence, to show that a subset of $(0,1]$ is of Hausdorff dimension $1$ by using \eqref{rough dimension of E ln rn}, it is sufficient to consider the lower bound. The lemma below provides an instance. It will be used in the proofs of the results related to the CLT and the LIL.

\begin{lemma} \label{dimension of E check}
Let $f \colon \mathbb{N} \to \mathbb{R}$ be a function defined by $f(n) \coloneqq n + \beta \psi (n)$, where $\beta \in \mathbb{R}$ is an arbitrary constant and $\psi \colon \mathbb{N} \to (0, \infty)$ is a function satisfying conditions (\ref{theorem C psi beta 1}) and (\ref{theorem C psi beta 2}) in Theorem \ref{theorem C psi beta}. Define $L_n \coloneqq e^{f(n)}$ and $R_n \coloneqq ( 1 + \psi(n) n^{-1} ) L_n$ for each $n \in \mathbb{N}$. Then there exists $K \in \mathbb{N}$ such that the sequences $(l_n)_{n \in \mathbb{N}}$ and $(r_n)_{n \in \mathbb{N}}$ defined by
\begin{align} \label{ln rn definition}
l_n \coloneqq
\begin{cases}
2n, &\text{if } 1 \leq n \leq K; \\
L_n, &\text{if } n > K,
\end{cases}
\quad \text{and} \quad
r_n \coloneqq
\begin{cases}
2(n+1), &\text{if } 1 \leq n \leq K; \\
R_n, &\text{if } n > K,
\end{cases}
\end{align}
respectively, satisfy conditions (\ref{rough dimension of E lemma 1})--(\ref{rough dimension of E lemma 3}) in Lemma \ref{rough dimension of E lemma}. Furthermore, $\hdim {\mathcal{E}} = 1$, where the set ${\mathcal{E}} = \mathcal{E}((l_n)_{n \in \mathbb{N}}, (r_n)_{n \in \mathbb{N}})$ is defined as in \eqref{definition of E ln rn}.
\end{lemma}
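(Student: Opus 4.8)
The plan is to first check that $(l_n)_{n\in\mathbb{N}}$ and $(r_n)_{n\in\mathbb{N}}$ satisfy hypotheses (\ref{rough dimension of E lemma 1})--(\ref{rough dimension of E lemma 3}) of Lemma~\ref{rough dimension of E lemma}, and then to read off $\hdim\mathcal{E}=1$ from the left-hand inequality of \eqref{rough dimension of E ln rn}. Two consequences of the hypotheses on $\psi$ will be used throughout. Condition~(\ref{theorem C psi beta 1}) forces $\psi(n)/n\to0$ (it is the running average of a null sequence), hence $f(n)=n+\beta\psi(n)=n(1+o(1))\to\infty$ and $L_n=e^{f(n)}\to\infty$; in particular $\psi(n)=o(n)$, so $\log\psi(n)\le\log n$ for all large $n$. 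Condition~(\ref{theorem C psi beta 2}), after taking logarithms, reads $pn+\log\psi(n)\to\infty$ for every $p>0$, so $\liminf_{n\to\infty}\log\psi(n)/n\ge0$; combined with $\log\psi(n)\le\log n$ this gives $\log\psi(n)/n\to0$. Together these show $\Delta_n=r_n-l_n=(\psi(n)/n)L_n=e^{\,f(n)+\log\psi(n)-\log n}\to\infty$.

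For the verification: when $n+1\le K$ the conditions reduce to $\Delta_n=2$, $2l_{n+1}-r_{n+1}=2n\ge1$, and $r_n=l_{n+1}$, all clear. For $n>K$, condition~(\ref{rough dimension of E lemma 1}) is $\Delta_n\ge2$, true for large $n$ since $\Delta_n\to\infty$; condition~(\ref{rough dimension of E lemma 2}) amounts to $(\psi(n+1)/(n+1))L_{n+1}\le L_{n+1}-1$, true once $\psi(n+1)/(n+1)\le\frac12$ and $L_{n+1}\ge2$; condition~(\ref{rough dimension of E lemma 3}) amounts to $1+\psi(n)/n\le e^{\,1+\beta(\psi(n+1)-\psi(n))}$, and the left side tends to $1$ while the right side tends to $e$. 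The junction $n=K$ (requiring $r_K\le l_{K+1}$ and $r_{K+1}\le2l_{K+1}-1$) is handled identically from $L_{K+1}\to\infty$ and $\psi(K+1)/(K+1)\to0$. Choosing $K$ large enough, Lemma~\ref{rough dimension of E lemma} applies.

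Since $\hdim\mathcal{E}\le1$ trivially, it remains to show the left-hand side of \eqref{rough dimension of E ln rn} is $\ge1$, and I will do this by showing both the numerator $\log(\prod_{k=1}^n\Delta_k)$ and the denominator $\log\big(\tfrac{r_{n+1}r_{n+2}}{\Delta_{n+1}\Delta_{n+2}}\prod_{k=1}^{n+1}r_k\big)$ equal $\tfrac12n^2+o(n^2)$. Finitely many factors with index $\le K$ change each logarithm by a bounded amount, so I may restrict to indices $k>K$, where $\Delta_k=e^{\,f(k)+\log\psi(k)-\log k}$ and $r_k=(1+\psi(k)/k)e^{f(k)}$. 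For the numerator,
\[
\sum_{k=K+1}^{n}\log\Delta_k=\sum_{k=K+1}^{n}f(k)+\sum_{k=K+1}^{n}\log\psi(k)-\sum_{k=K+1}^{n}\log k,
\]
where $\sum_{k\le n}f(k)=\sum_{k\le n}k+\beta\sum_{k\le n}\psi(k)=\tfrac12n^2+o(n^2)$ because $\sum_{k\le n}\psi(k)=\sum_{k\le n}k\cdot(\psi(k)/k)=o(n^2)$, while $\sum_{k\le n}\log k=O(n\log n)$ by \eqref{bounds for factorial} and $\sum_{k\le n}\log\psi(k)=\sum_{k\le n}k\cdot(\log\psi(k)/k)=o(n^2)$ by the same weighted-sum argument (a sum $\sum_{k\le n}k\,c_k$ with $c_k\to0$ is $o(n^2)$). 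For the denominator, each term $\log(r_{n+j}/\Delta_{n+j})=\log((n+j)+\psi(n+j))-\log\psi(n+j)$, $j\in\{1,2\}$, is nonnegative since $r_{n+j}/\Delta_{n+j}>1$ and is $O(n)$ since $-\log\psi(n+j)\le n+j$ eventually by condition~(\ref{theorem C psi beta 2}) with $p=1$; and $\sum_{k=K+1}^{n+1}\log r_k=\sum_{k\le n+1}f(k)+\sum_{k=K+1}^{n+1}\log(1+\psi(k)/k)=\tfrac12n^2+o(n^2)$, the error term being $o(n)$ because $0\le\log(1+\psi(k)/k)\le\psi(k)/k\to0$. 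Hence numerator and denominator are both $\tfrac12n^2+o(n^2)$, the quotient tends to $1$, and $\hdim\mathcal{E}=1$.

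The step I expect to be the main obstacle is the control of $\sum_{k\le n}\log\psi(k)$: under (\ref{theorem C psi beta 1}) and (\ref{theorem C psi beta 2}) alone, $\psi$ may decay to $0$ (for example $\psi(n)=e^{-\sqrt n}$), so $\log\psi(k)$ can be quite negative, and one must see that it is not so negative as to dominate. Condition~(\ref{theorem C psi beta 2}) is precisely what prevents exponential decay and delivers $\log\psi(n)\ge-\varepsilon n$ eventually for every $\varepsilon>0$, which together with $\log\psi(n)\le\log n$ makes every $\psi$-dependent term negligible beside the $\tfrac12n^2$ coming from $\sum_{k\le n}k$. The remaining work is routine bookkeeping around the truncation at $k=K$.
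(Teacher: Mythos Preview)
Your proposal is correct and follows essentially the same route as the paper: verify (\ref{rough dimension of E lemma 1})--(\ref{rough dimension of E lemma 3}) from $\psi(n)/n\to0$ and $L_n\to\infty$, then use the lower bound in \eqref{rough dimension of E ln rn} together with the key observation (from condition~(\ref{theorem C psi beta 2})) that $\log\psi(n)\ge -\varepsilon n$ eventually for every $\varepsilon>0$. The only cosmetic difference is that the paper carries an auxiliary parameter $p>0$ through the estimate and obtains $\liminf g(n)\ge 1-p$ before letting $p\to0^+$, whereas you first deduce $\log\psi(n)/n\to0$ and then argue directly that numerator and denominator are both $\tfrac12 n^2+o(n^2)$; the content is the same.
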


\begin{proof}
Since $\psi (n+1) - \psi (n) \to 0$ as $n \to \infty$ by condition (\ref{theorem C psi beta 1}) imposed on $\psi$, for any $\varepsilon > 0$ there exists an $M = M(\varepsilon) \in \mathbb{N}$ such that $- \varepsilon < \psi (n+1) - \psi (n) < \varepsilon$ for all $n \geq M$. Then
\[
\psi (M) - (n-M) \varepsilon \leq \psi (n) \leq \psi (M) + (n-M) \varepsilon
\]
for all $n \geq M$, and further, by the definition of $f \colon \mathbb{N} \to \mathbb{R}$,
\[
f(M) + (n-M) (1 - |\beta| \varepsilon) \leq f(n) \leq f(M) + (n-M) (1 + |\beta| \varepsilon)
\]
for all $n \geq M$. Hence
\[
0 \leq \liminf_{n \to \infty} \frac{\psi (n)}{n} \leq \limsup_{n \to \infty} \frac{\psi (n)}{n} \leq \varepsilon
\]
and
\[
\frac{1}{2} (1 - |\beta| \varepsilon) \leq \liminf_{n \to \infty} \sum_{k=1}^n \frac{f(k)}{n^2} \leq \limsup_{n \to \infty} \sum_{k=1}^n \frac{f(k)}{n^2} \leq \frac{1}{2} (1 + |\beta| \varepsilon).
\]
Since $\varepsilon>0$ is arbitrary, it follows that 
\begin{align} \label{psi n over n and sum of f over n squared}
\lim_{n \to \infty} \frac{\psi (n)}{n} = 0 \quad \text{and} \quad \lim_{n \to \infty} \sum_{k=1}^n \frac{f(k)}{n^2} = \frac{1}{2}.
\end{align}

If $\beta \geq 0$, then $L_n \geq e^n$ for each $n \in \mathbb{N}$. If $\beta < 0$, by the first convergence in \eqref{psi n over n and sum of f over n squared} we can find an $M' \in \mathbb{N}$ such that $0 \leq \psi (n) n^{-1} < - (2 \beta)^{-1}$, and so $L_n \geq e^{n/2}$ for all $n \geq M'$. Hence, in either case, we can find $K_1 \in \mathbb{N}$ such that $L_{n+1} \geq 2(n+1)$ for all $n \geq K_1$. Clearly, $L_n \to \infty$ as $n \to \infty$. We further observe the following:
\begin{enumerate}[label=\upshape(\arabic*), ref=\arabic*, leftmargin=*, widest=3]
\item \label{observation 1}
By the preceding discussion, for all $n \geq M'$, we have $R_n-L_n = \psi (n) n^{-1} L_n \geq \psi (n) n^{-1} e^{n/2}$. We know that $e^{n/4} \psi (n) > 1$ for all sufficiently large $n$ by (\ref{theorem C psi beta 2}). Then $R_n-L_n > e^{n/4} n^{-1}$ for all sufficiently large $n$, so that $R_n-L_n \to \infty$ as $n \to \infty$. Now, we can find a $K_2 \in \mathbb{N}$ such that $R_n-L_n \geq 2$ for all $n \geq K_2$.
\item \label{observation 2}
Since $2 L_{n+1} - R_{n+1} = ( 1 - \psi (n+1) (n+1)^{-1} ) L_{n+1} \to (1-0) \cdot \infty = \infty$ as $n \to \infty$ by \eqref{psi n over n and sum of f over n squared}, there exists $K_3 \in \mathbb{N}$ such that $2 L_{n+1} - R_{n+1} \geq 1$ for all $n \geq K_3$.
\item \label{observation 3}
Since
\[
L_{n+1} - R_n = ( e^{1 + \beta [\psi (n+1) - \psi (n)]} - 1 - \psi (n) n^{-1} ) L_n \to (e^{1+\beta \cdot 0}-1-0) \cdot \infty = \infty
\]
as $n \to \infty$ by (\ref{theorem C psi beta 1}) and \eqref{psi n over n and sum of f over n squared}, there exists $K_4 \in \mathbb{N}$ such that $R_n \leq L_{n+1}$ for all $n \geq K_4$.
\end{enumerate}
Put $K \coloneqq \max \{ K_1, K_2, K_3, K_4 \}$. Define sequences $(l_n)_{n \in \mathbb{N}}$ and $(r_n)_{n \in \mathbb{N}}$ as in \eqref{ln rn definition}. For $1 \leq n \leq K$, we have
\begin{enumerate}[label=\upshape(\arabic*{$'$}), ref=\arabic*{$'$}, leftmargin=*, widest=3]
\item \label{observation 1 prime}
$\Delta_n \equiv 2$,
\item \label{observation 2 prime}
$2l_{n+1} - r_{n+1} = 2 \cdot 2(n+1) - 2(n+2) = 2n \geq 1$ if $n \neq K$,
\item \label{observation 3 prime}
$r_n = l_{n+1} = 2(n+1)$ if $n \neq K$, and $r_K = 2(K+1) \leq L_{K+1} = l_{K+1}$.
\end{enumerate}
Combining (\ref{observation 1})--(\ref{observation 3}) and (\ref{observation 1 prime})--(\ref{observation 3 prime}), we conclude that the sequences $(l_n)_{n \in \mathbb{N}}$ and $(r_n)_{n \in \mathbb{N}}$ satisfy conditions (\ref{rough dimension of E lemma 1})--(\ref{rough dimension of E lemma 3}) in Lemma \ref{rough dimension of E lemma}. 

Consequently, we may use \eqref{rough dimension of E ln rn} in Lemma \ref{rough dimension of E lemma} to prove the last assertion. Let $p \in (0,\infty)$ be arbitrary. To calculate the lower bound in \eqref{rough dimension of E ln rn}, we write for each $n>K$,
\[
g(n) \coloneqq \frac{\log \left( \prod \limits_{k=1}^n \Delta_k \right)}{\log \left( \dfrac{r_{n+1} r_{n+2}}{\Delta_{n+1} \Delta_{n+2}} \prod \limits_{k=1}^{n+1} r_k \right)}
= \frac{\log \left( \prod \limits_{k=1}^K \Delta_k \cdot \prod \limits_{k=K+1}^n \Delta_k \right)}{\log \left( \prod \limits_{k=1}^{K} r_k \cdot \dfrac{r_{n+1} r_{n+2}}{\Delta_{n+1} \Delta_{n+2}} \prod \limits_{k=K+1}^{n+1} r_k \right)}.
\]
Let $C$ and $D$ be constants given by
\begin{align*}
C &\coloneqq - \log \left[ \prod_{k=1}^K \left( \frac{\psi(k)}{k} e^{f(k)} \right) \right] + K \log 2, \\
D &\coloneqq - \log \left[ \prod_{k=1}^K \left( \left( 1 + \frac{\psi(k)}{k} \right) e^{f(k)} \right) \right] + K \log 2 + \log (K+1)! ,
\end{align*}
respectively. Then for each $n>K$, we have
\begin{align*}
g(n) 
&= \frac{C+\log \left( \prod \limits_{k=1}^n \left( \dfrac{\psi (k)}{k} e^{f(k)} \right) \right)}{D+\log \left( \dfrac{\left( 1 + \dfrac{\psi (n+1)}{n+1} \right) L_{n+1} \left( 1 + \dfrac{\psi (n+2)}{n+2} \right) L_{n+2}}{\dfrac{\psi(n+1)}{n+1} L_{n+1} \dfrac{\psi(n+2)}{n+2} L_{n+2}}  \prod \limits_{k=1}^{n+1} \left( \left( 1 + \dfrac{\psi (k)}{k} \right) e^{f(k)} \right) \right)} \\
&= \frac{C+\sum \limits_{k=1}^n f(k) + \sum \limits_{k=1}^n \log \psi (k) - \log n!}{D+\log \left[ \left( \dfrac{n+1}{\psi(n+1)} +1 \right) \left( \dfrac{n+2}{\psi(n+2)} +1 \right) \right] + \sum \limits_{k=1}^{n+1} f(k) + \sum \limits_{k=1}^{n+1} \log \left( 1 + \dfrac{\psi (k)}{k} \right)}.
\end{align*}
Now, by condition (\ref{theorem C psi beta 2}) on $\psi$ and the first convergence in \eqref{psi n over n and sum of f over n squared}, we can find an integer $M'' \geq K$ such that $e^{pn} \psi (n) > 1$ and $n > \psi (n)$ for all $n > M''$. Hence, for all $n > M''$, 
\[
\log \psi (n) > - p n, \quad \frac{n}{\psi (n)} + 1 < \frac{2n}{\psi (n)} < 2 n e^{pn}, \quad \log \left( 1 + \frac{\psi (n)}{n} \right) < \log 2,
\]
and it follows by \eqref{bounds for factorial} that
\begin{align*}
g(n)
&\geq \frac{C+\sum \limits_{k=1}^n f(k) + \sum \limits_{k=1}^{M''} \log \psi (k) - \sum \limits_{k=M''+1}^n pk - [(n+1) \log n - (n-1)]}{D+\log [2(n+1)e^{p(n+1)} \cdot 2(n+2)e^{p(n+2)}] + \sum \limits_{k=1}^{n+1} f(k) + \sum \limits_{k=1}^{M''} \log \left( 1 + \dfrac{\psi(k)}{k} \right) + \sum \limits_{k=M''+1}^{n+1} \log 2} \\
&= \Bigg[\frac{C}{n^2} + \sum \limits_{k=1}^n \dfrac{f(k)}{n^2} + \sum \limits_{k=1}^{M''} \dfrac{\log \psi (k)}{n^2} - \dfrac{p(n+M''+1)(n-M'')}{2n^2} - \dfrac{(n+1) \log n - (n-1)}{n^2} \Bigg] \\
&\hspace{1cm} \Bigg/
\Bigg[ \frac{D}{n^2} + \dfrac{\log [4(n+1)(n+2)e^{p(2n+3)}]}{n^2} + \dfrac{(n+1)^2}{n^2} \sum \limits_{k=1}^{n+1} \dfrac{f(k)}{(n+1)^2}  \\
&\hspace{5cm} + \dfrac{1}{n^2} \sum \limits_{k=1}^{M''} \log \left( 1 + \dfrac{\psi(k)}{k} \right) + \dfrac{(n-M''+1) \log 2}{n^2} \Bigg]
\end{align*}
for all $n > M''$. Notice that the whole expression in the three lines above converges to
\begin{align*}
\frac{0 + (1/2) + 0 - (p/2) - 0}{0 + 0 + (1)(1/2) + 0 + 0} = 1-p
\end{align*}
as $n \to \infty$ by the second convergence in \eqref{psi n over n and sum of f over n squared}. Therefore, by the left-hand inequality of \eqref{rough dimension of E ln rn}, we conclude that
\begin{align*}
\hdim \mathcal{E} \geq \liminf_{n \to \infty} g(n) \geq 1-p.
\end{align*}
Since $p \in (0, \infty)$ was arbitrary, letting $p \to 0^+$ we conclude that $\hdim \mathcal{E} \geq 1$. It is obvious that $\hdim \mathcal{E} \leq 1$, and this completes the proof.
\end{proof}

The following lemma is an analogue of \cite[Lemma 2.1]{She10}, where the Engel expansion is discussed. This lemma tells us in particular that neither adding (or shifting to the right) nor removing (or shifting to the left) finitely many initial Pierce expansion digits alters the Hausdorff dimension.

\begin{lemma} \label{shifting of digits}
Let $n \in \mathbb{N}$ and $F \subseteq (0,1]$. For each $\sigma \coloneqq (\sigma_k)_{k=1}^n \in \mathbb{N}^n$, define a map $g_\sigma \colon F \to \mathbb{R}$ by
\[
g_\sigma (x) \coloneqq \varphi_n (\sigma) + (-1)^n \left( \prod_{k=1}^n \frac{1}{\sigma_k} \right)  x
\]
for each $x \in F$. Put
\[
\mathcal{F} \coloneqq \bigcup_{\sigma \in \mathbb{N}^n} g_\sigma (F).
\]
Then $\hdim \mathcal{F} = \hdim F$.
\end{lemma}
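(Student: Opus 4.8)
The plan is to note that each $g_\sigma$ is a non-degenerate affine map, hence bi-Lipschitz onto its image, so $\hdim g_\sigma(F) = \hdim F$ for every $\sigma$; since $\mathcal{F}$ is then a \emph{countable} union of such images, countable stability of Hausdorff dimension yields the conclusion.

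First I would fix $n \in \mathbb{N}$ and $\sigma \coloneqq (\sigma_k)_{k=1}^n \in \mathbb{N}^n$, and set $b_\sigma \coloneqq (-1)^n \prod_{k=1}^n \sigma_k^{-1}$, so that $g_\sigma(x) = \varphi_n(\sigma) + b_\sigma x$ is affine in $x$. Since each $\sigma_k$ is a positive integer, $b_\sigma \neq 0$, and for all $x, y \in F$ we have $|g_\sigma(x) - g_\sigma(y)| = |b_\sigma|\,|x - y|$. Hence $g_\sigma \colon F \to g_\sigma(F)$ is a bijection which is $|b_\sigma|$-Lipschitz with $|b_\sigma|^{-1}$-Lipschitz inverse; as Lipschitz maps with Lipschitz inverse preserve Hausdorff dimension, $\hdim g_\sigma(F) = \hdim F$.

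Next, since $\mathbb{N}^n$ is countable, $\mathcal{F} = \bigcup_{\sigma \in \mathbb{N}^n} g_\sigma(F)$ is a countable union of sets each of Hausdorff dimension $\hdim F$. By the countable stability of Hausdorff dimension, $\hdim \mathcal{F} = \sup_{\sigma \in \mathbb{N}^n} \hdim g_\sigma(F) = \hdim F$ (the case $F = \varnothing$ being trivial), which is the asserted equality. There is no real obstacle in the argument; the only two points meriting a word of care are that the slope $b_\sigma$ is genuinely nonzero (guaranteed because the $\sigma_k$ are positive integers) and that one is passing to a countable rather than finite union, which is precisely what countable stability of $\hdim$ permits.
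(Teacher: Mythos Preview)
Your proposal is correct and follows essentially the same approach as the paper: observe that each $g_\sigma$ is an affine map with nonzero slope $\prod_{k=1}^n \sigma_k^{-1}$, hence bi-Lipschitz onto its image so that $\hdim g_\sigma(F)=\hdim F$, and then invoke countable stability of Hausdorff dimension over the countable index set $\mathbb{N}^n$.
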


\begin{proof}
It is clear that $g_\sigma$ is well-defined for each $\sigma \in \mathbb{N}^n$. It is also clear that $g_\sigma \colon F \to g_\sigma (F)$ is a bi-Lipschitz mapping since $g_\sigma$ is linear on $F \subseteq (0,1]$. Indeed, for any $x,y \in F$,
\begin{align*}
| g_\sigma (x) - g_\sigma (y) | = \left( \prod_{k=1}^n \frac{1}{\sigma_k} \right) |x-y|.
\end{align*}
It follows that $\hdim F = \hdim g_\sigma (F)$ for any $\sigma \in \mathbb{N}^n$. Note that $\mathbb{N}^n$ is countable. Therefore, by countable stability of the Hausdorff dimension, we deduce that
\[
\hdim \mathcal{F} = \hdim \bigcup_{\sigma \in \mathbb{N}^n} g_\sigma (F) = \sup_{\sigma \in \mathbb{N}^n} \hdim g_\sigma (F) = \hdim F,
\]
as desired.
\end{proof}

\begin{remark} \label{remark to shifting of digits lemma}
If $x = \langle d_1(x), d_2(x), \dotsc \rangle_P \in F$, then the Pierce expansion of $g_\sigma (x)$, where $\sigma \coloneqq (\sigma_k)_{k=1}^n \in \mathbb{N}^n$, is not necessarily given by
\[
\langle \sigma_1, \dotsc, \sigma_n, d_1(x), d_2(x), \dotsc \rangle_P
\]
since the inequalities $\sigma_1 < \dotsb < \sigma_n < d_1(x)$ might not be true (see Propositions \ref{digit condition lemma} and \ref{strict increasing condition}). It might even be possible that $g_\sigma (x)$ does not lie in $(0,1]$. However, for $y = \langle d_1(y), d_2(y), \dotsc \rangle_P \in \mathbb{I}$, the Pierce expansion of $x \coloneqq g_{\tau}^{-1}(y)$, where $\tau \coloneqq (d_k(y))_{k=1}^n \in \Sigma_n \subseteq \mathbb{N}^n$, is equal to
\[
\langle d_{n+1}(y), d_{n+2}(y), \dotsc \rangle_P,
\]
in which case $y = g_\tau (x) \in \bigcup_{\sigma \in \mathbb{N}^n} \{ g_\sigma (x) \}$.
\end{remark}

The following lemma is useful in investigating the Hausdorff dimensions of sets defined in terms of two consecutive digits.

\begin{lemma} \label{preservation of dimension}
Let $h_i \colon \mathbb{N} \to \mathbb{R}$ be any functions for $i \in \{ 1, 2, 3 \}$. Let $K \in \mathbb{N}$. Then, the following hold:
\begin{enumerate}[label=\upshape(\roman*), ref=\roman*, leftmargin=*, widest=ii]
\item \label{preservation of dimension 1}
For each integer $m \geq K$, define 
\[
\mathcal{S} (m, h_1, h_2) \coloneqq \{ x \in (0,1] : h_1 (d_n(x)) < h_2 (d_{n+1}(x)) \text{ for all } n \geq m \}.
\]
Then, for each $m \geq K$,
\[
\hdim \mathcal{S} (m, h_1, h_2) = \hdim \mathcal{S} (K, h_1, h_2).
\]
\item \label{preservation of dimension 2}
For each integer $m \geq K$, define 
\[
\mathcal{S} (m, h_1, h_2, h_3) \coloneqq \{ x \in (0,1] : h_1 (d_n(x)) < h_2 (d_{n+1}(x)) \leq h_3 (d_n(x)) \text{ for all } n \geq m \}.
\]
Then, for each $m \geq K$,
\[
\hdim \mathcal{S} (m, h_1, h_2, h_3) = \hdim \mathcal{S} (K, h_1, h_2, h_3).
\]
\end{enumerate}
\end{lemma}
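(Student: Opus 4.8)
The plan is to treat both parts at once, since the argument for (\ref{preservation of dimension 2}) is word-for-word that for (\ref{preservation of dimension 1}) with the extra inequality $h_2(d_{n+1}(x)) \le h_3(d_n(x))$ carried along passively. Fix $m \ge K$. One inclusion is free: a point meeting the defining condition for all $n \ge K$ a fortiori meets it for all $n \ge m$, so $\mathcal{S}(K,h_1,h_2) \subseteq \mathcal{S}(m,h_1,h_2)$ (resp.\ $\mathcal{S}(K,h_1,h_2,h_3) \subseteq \mathcal{S}(m,h_1,h_2,h_3)$), and monotonicity of Hausdorff dimension gives $\hdim \mathcal{S}(K,\dotsc) \le \hdim \mathcal{S}(m,\dotsc)$. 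The content of the lemma is the reverse inequality.

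For that, I would apply Lemma \ref{shifting of digits} with $F \coloneqq \mathcal{S}(K,h_1,h_2) \cap \mathbb{I}$ and with the integer $n$ there taken to be $m-K$ (the case $m=K$ is trivial: $g_\epsilon$ is the identity). Writing $\mathcal{F} \coloneqq \bigcup_{\sigma \in \mathbb{N}^{m-K}} g_\sigma(F)$, Lemma \ref{shifting of digits} yields $\hdim \mathcal{F} = \hdim F$ directly, via bi-Lipschitz invariance and countable stability, with no compatibility hypotheses on the words $\sigma$. The crux is then the set inclusion $\mathcal{S}(m,h_1,h_2) \cap \mathbb{I} \subseteq \mathcal{F}$. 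To verify it, let $x \in \mathcal{S}(m,h_1,h_2) \cap \mathbb{I}$ and put $\sigma \coloneqq (d_1(x),\dotsc,d_{m-K}(x)) \in \Sigma_{m-K}$. By Remark \ref{remark to shifting of digits lemma}, the point $x' \coloneqq g_\sigma^{-1}(x)$ lies in $\mathbb{I}$ and has Pierce expansion $\langle d_{m-K+1}(x), d_{m-K+2}(x), \dotsc \rangle_P$, so that $d_k(x') = d_{m-K+k}(x)$ for every $k \in \mathbb{N}$. Consequently, for each $n \ge K$ we have $h_1(d_n(x')) = h_1(d_{m-K+n}(x))$ and $h_2(d_{n+1}(x')) = h_2(d_{m-K+n+1}(x))$; since $m-K+n \ge m$, membership $x \in \mathcal{S}(m,h_1,h_2)$ forces $h_1(d_n(x')) < h_2(d_{n+1}(x'))$. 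Hence $x' \in \mathcal{S}(K,h_1,h_2) \cap \mathbb{I} = F$ and $x = g_\sigma(x') \in \mathcal{F}$. For part (\ref{preservation of dimension 2}) the very same reindexing also transports the bound $h_2(d_{n+1}(x')) \le h_3(d_n(x'))$ for all $n \ge K$, so $x' \in \mathcal{S}(K,h_1,h_2,h_3) \cap \mathbb{I}$.

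With the inclusion established, the conclusion is formal: $(0,1] \setminus \mathbb{I}$ is countable, hence of Hausdorff dimension $0$, so by countable stability $\hdim \mathcal{S}(m,h_1,h_2) = \hdim(\mathcal{S}(m,h_1,h_2) \cap \mathbb{I}) \le \hdim \mathcal{F} = \hdim F = \hdim(\mathcal{S}(K,h_1,h_2)\cap\mathbb{I}) = \hdim \mathcal{S}(K,h_1,h_2)$; combined with the first paragraph this proves (\ref{preservation of dimension 1}), and (\ref{preservation of dimension 2}) follows identically.

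I expect the only delicate point to be the inclusion $\mathcal{S}(m,\dotsc) \cap \mathbb{I} \subseteq \mathcal{F}$: it is essential to restrict to irrationals, so that digit sequences are genuinely infinite and ``deleting the first $m-K$ digits'' is exactly the map $g_\sigma^{-1}$ described in Remark \ref{remark to shifting of digits lemma}. Note that the opposite ``prepend $m-K$ digits'' direction is \emph{not} needed and would in fact fail for a generic word $\sigma \in \mathbb{N}^{m-K}$ (the inequalities $\sigma_1 < \dotsb < \sigma_{m-K} < d_1(x')$ may be violated); this is harmless because Lemma \ref{shifting of digits} already delivers $\hdim \mathcal{F} = \hdim F$ regardless of such incompatibilities.
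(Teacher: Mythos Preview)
Your proof is correct and follows essentially the same route as the paper's: one inclusion by monotonicity, the reverse via Lemma \ref{shifting of digits} applied with $F$ the $K$-level set and words of length $m-K$, using Remark \ref{remark to shifting of digits lemma} to identify $g_\sigma^{-1}$ with deletion of the first $m-K$ digits. The only cosmetic difference is that you intersect explicitly with $\mathbb{I}$ and invoke countable stability at the end, whereas the paper simply observes that any $y \in \mathcal{S}(m)$ is automatically irrational (since the defining condition requires $d_n(y)$ to exist for all $n \ge m$) and works with $F = \mathcal{S}(K)$ directly.
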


\begin{proof} 
Since the proofs of two parts are virtually identical, we prove part (\ref{preservation of dimension 2}) only. For each integer $m \geq K$, we write $\mathcal{S}(m) \coloneqq \mathcal{S} (m, h_1, h_2, h_3)$ for brevity. For any $m \geq K$, since $\mathcal{S} (K) \subseteq \mathcal{S} (m)$ by definition, we have $\hdim \mathcal{S} (K) \leq \hdim \mathcal{S} (m)$ by monotonicity of the Hausdorff dimension. To prove the reverse inequality, take $F \coloneqq \mathcal{S} (K) \subseteq (0,1]$ in Lemma \ref{shifting of digits}. Then, for any $m > K$,
\[
\mathcal{F} \coloneqq \bigcup_{\sigma \in \mathbb{N}^{m-K}} g_\sigma (F) \supseteq \mathcal{S} (m).
\]
To see this, let $m > K$ and $y \in \mathcal{S} (m)$. By definition of $\mathcal{S}(m)$, it is clear that $y \in \mathbb{I}$. As discussed in Remark \ref{remark to shifting of digits lemma}, we may write $y = g_\tau (x)$, where $\tau \coloneqq (d_j(y))_{j=1}^{m-K} \in \Sigma_{m-K} \subseteq \mathbb{N}^{m-K}$ and
\[
x \coloneqq \langle d_{m-K+1}(y), d_{m-K+2}(y), \dotsc \rangle_P.
\]
Notice that $d_j (x) = d_{m-K+j} (y)$ for all $j \in \mathbb{N}$ by the uniqueness of the Pierce expansion. Since $y \in \mathcal{S} (m)$, we have
\[
h_1 (d_n(y)) < h_2 (d_{n+1}(y)) \leq h_3 (d_n(y)) \quad \text{for all } n \geq m
\]
by definition. Hence, for all $n \geq K$,
\[
h_1 (d_n(x)) < h_2 (d_{n+1}(x)) \leq h_3 (d_n(x)).
\]
This shows that $x \in \mathcal{S} (K) = F$ and $y = g_\tau (x) \in g_\tau (F)$. Hence, $y \in \mathcal{F}$, and we conclude that $\mathcal{S} (m) \subseteq \mathcal{F}$. But then monotonicity of the Hausdorff dimension and Lemma \ref{shifting of digits} tell us that
\[
\hdim \mathcal{S} (m) \leq \hdim \mathcal{F} = \hdim F = \hdim \mathcal{S} (K),
\]
and the statement is proved.
\end{proof}

\section{Proofs of the results} \label{proofs of the results}

\subsection{The growth speed of the digits and the law of large numbers}

This subsection is devoted to proving the first two main results of this paper and their corollaries. 

\subsubsection{Proof of Theorem \ref{theorem E phi}} \label{subsubsection theorem E phi}

Throughout Section \ref{subsubsection theorem E phi}, let $\phi \colon\mathbb{N} \to (0, \infty)$, $\gamma$, $\xi$, and $E(\phi)$ be as described in Theorem \ref{theorem E phi}.

\begin{lemma} \label{E phi lemma 1} 
If $\limsup_{n \to \infty} \phi (n)/\log n < 1$, then $E(\phi) = \varnothing$ and thus $\hdim E(\phi) = 0$.
\end{lemma}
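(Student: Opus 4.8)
The plan is a direct contradiction argument resting on the universal lower bound $d_n(x)\geq n$ for irrational $x$. First I would observe that $E(\phi)\subseteq\mathbb{I}$: every rational number in $(0,1]$ has a digit sequence of finite length, so $d_n(x)$ is undefined for all large $n$ and the limit in \eqref{definition of E phi} cannot even be formed, let alone equal $1$. Hence it suffices to rule out irrational points.

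Suppose toward a contradiction that $x\in E(\phi)$. Then $x\in\mathbb{I}$, so by Proposition \ref{digit condition lemma}(\ref{digit condition lemma 2}) we have $d_n(x)\geq n$, and therefore $\log d_n(x)\geq\log n$ for every $n\in\mathbb{N}$. Since $\limsup_{n\to\infty}\phi(n)/\log n<1$, I would fix a constant $c$ with $\limsup_{n\to\infty}\phi(n)/\log n<c<1$ together with an integer $N\geq 2$ such that $\phi(n)\leq c\log n$ for all $n\geq N$. Then, for every $n\geq N$,
\[
\frac{\log d_n(x)}{\phi(n)}\geq\frac{\log n}{c\log n}=\frac{1}{c}>1,
\]
so $\liminf_{n\to\infty}\bigl(\log d_n(x)/\phi(n)\bigr)\geq 1/c>1$, which contradicts the defining condition $\lim_{n\to\infty}\bigl(\log d_n(x)/\phi(n)\bigr)=1$. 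Consequently $E(\phi)=\varnothing$, and $\hdim E(\phi)=\hdim\varnothing=0$.

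There is essentially no obstacle here; the one point that needs a little care is to work with a constant $c$ strictly between $\limsup_{n\to\infty}\phi(n)/\log n$ and $1$ (rather than merely noting $\phi(n)<\log n$ eventually), so that the resulting lower bound $1/c$ is strictly larger than $1$ and genuinely contradicts convergence to $1$. I would also remark that the hypothesis actually used is precisely $\limsup_{n\to\infty}\phi(n)/\log n<1$, which is weaker than the assumption $\phi(n)/\log n\to\gamma\in[0,1)$ appearing in the case $\gamma\in[0,1)$ of Theorem \ref{theorem E phi}; this is exactly the strengthening recorded in the first of the Remarks following that theorem.
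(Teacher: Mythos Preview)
Your proof is correct and follows essentially the same approach as the paper: both fix a constant strictly between $\limsup_{n\to\infty}\phi(n)/\log n$ and $1$ (the paper writes it as $\gamma'+\varepsilon$, you call it $c$), invoke $d_n(x)\geq n$ from Proposition~\ref{digit condition lemma}(\ref{digit condition lemma 2}), and conclude that $\liminf_{n\to\infty}\log d_n(x)/\phi(n)\geq 1/c>1$. The only cosmetic difference is that you phrase it as a contradiction while the paper argues directly that no irrational $x$ can lie in $E(\phi)$.
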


\begin{proof}
Put $\gamma' \coloneqq \limsup_{n \to \infty} \phi (n)/\log n$. By the hypothesis, $\gamma' \in [0,1)$ since $\phi$ is positive real-valued. Take $\varepsilon > 0$ so small that $\gamma' + \varepsilon < 1$. By definition of $\limsup$, we can find an $M = M(\varepsilon) \in \mathbb{N}$ such that $\phi (n) < (\gamma' + \varepsilon) \log n$ for all $n > M$. Now, for any $x \in \mathbb{I}$, if $n > M$, then
\[
\frac{\log d_n(x)}{\phi (n)} > \frac{\log d_n(x)}{(\gamma' + \varepsilon) \log n} \geq \frac{\log n}{(\gamma' + \varepsilon) \log n} = \frac{1}{\gamma' + \varepsilon},
\]
where the second inequality follows from Proposition \ref{digit condition lemma}(\ref{digit condition lemma 2}). Then $\liminf_{n \to \infty} \log d_n(x) / \phi (n) \geq (\gamma' + \varepsilon)^{-1} > 1$, and this shows that $E(\phi) = \varnothing$. Thus, $\hdim E(\phi) = 0$.
\end{proof}

\begin{lemma} \label{n log n over sum lemma}
If $\gamma \in [1,\infty]$, then
\begin{align*} 
\lim_{n \to \infty} \frac{n \log n}{\sum \limits_{k=1}^n \phi (k)} = \gamma^{-1}.
\end{align*}
\end{lemma}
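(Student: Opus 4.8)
The plan is to compare the partial sums $\sum_{k=1}^{n}\phi(k)$ with $\gamma\sum_{k=1}^{n}\log k=\gamma\log n!$ and to feed in the factorial bounds of Proposition~\ref{bounds for factorial lemma}. I would first record the easy preliminary observation that the hypothesis $\phi(n)/\log n\to\gamma\in[1,\infty]$ forces $\phi(n)\to\infty$, so that $\sum_{k=1}^{n}\phi(k)\to\infty$ and the quotient $n\log n/\sum_{k=1}^{n}\phi(k)$ is eventually positive and finite; this also makes the statement well posed.

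\emph{Case $\gamma\in[1,\infty)$.} Fix $\varepsilon\in(0,1)$; here the assumption $\gamma\ge 1$ is used so that $\gamma-\varepsilon>0$. Choose $M=M(\varepsilon)\in\mathbb{N}$ with $(\gamma-\varepsilon)\log k<\phi(k)<(\gamma+\varepsilon)\log k$ for all $k>M$, put $S_M\coloneqq\sum_{k=1}^{M}\phi(k)$, and sum to obtain, for $n>M$,
\[
S_M+(\gamma-\varepsilon)\sum_{k=M+1}^{n}\log k<\sum_{k=1}^{n}\phi(k)<S_M+(\gamma+\varepsilon)\sum_{k=M+1}^{n}\log k.
\]
Since $\sum_{k=M+1}^{n}\log k=\log n!-\log M!$ and Proposition~\ref{bounds for factorial lemma} gives $n\log n-(n-1)\le\log n!\le(n+1)\log n-(n-1)$, dividing by $n\log n$ shows that $\big(\sum_{k=M+1}^{n}\log k\big)/(n\log n)\to 1$. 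Dividing the displayed inequalities by $n\log n$ and letting $n\to\infty$ then yields
\[
\frac{1}{\gamma+\varepsilon}\le\liminf_{n\to\infty}\frac{n\log n}{\sum_{k=1}^{n}\phi(k)}\le\limsup_{n\to\infty}\frac{n\log n}{\sum_{k=1}^{n}\phi(k)}\le\frac{1}{\gamma-\varepsilon},
\]
and letting $\varepsilon\to 0^{+}$ gives the limit $\gamma^{-1}$.

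\emph{Case $\gamma=\infty$.} Here the estimate becomes one-sided: given any $N>0$, pick $M$ with $\phi(k)>N\log k$ for $k>M$, so $\sum_{k=1}^{n}\phi(k)\ge N\sum_{k=M+1}^{n}\log k$; the same factorial estimate gives $\limsup_{n\to\infty}n\log n/\sum_{k=1}^{n}\phi(k)\le N^{-1}$, and since the quotient is nonnegative and $N$ is arbitrary, the limit is $0=\gamma^{-1}$.

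I do not expect a genuine obstacle: this is a routine $\varepsilon$-estimate, and the only points needing attention are keeping $\gamma-\varepsilon>0$ (which is exactly where $\gamma\ge 1$ is used), absorbing the finitely many initial values $\phi(1),\dots,\phi(M)$ into the harmless constant $S_M$, and handling $\gamma=\infty$ separately. Alternatively, one could deduce the whole statement at once from the Stolz--Ces\`{a}ro theorem applied to $a_n=n\log n$ and $b_n=\sum_{k=1}^{n}\phi(k)$, using $n\log n-(n-1)\log(n-1)=\log n+(n-1)\log(1+(n-1)^{-1})=\log n+O(1)$ together with $\phi(n)/\log n\to\gamma$.
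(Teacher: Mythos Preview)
Your proof is correct and follows essentially the same route as the paper: an $\varepsilon$-sandwich on $\phi(k)$ by multiples of $\log k$, summation with a harmless initial constant, and an asymptotic for $\sum_{k\le n}\log k$, treating $\gamma=\infty$ by the one-sided version. The only cosmetic difference is that the paper estimates $\sum\log k$ via the integral bounds~\eqref{bounds for sum of log} while you invoke the equivalent factorial bounds of Proposition~\ref{bounds for factorial lemma}; your Stolz--Ces\`aro alternative is also perfectly valid.
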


\begin{proof}
Let $\gamma \in [1, \infty)$. Fix $\varepsilon \in (0, \gamma)$. Since ${\phi(n)}/{\log n} \to \gamma$ as $n \to \infty$, we can find an $M = M(\varepsilon) \in \mathbb{N}$ such that $(\gamma-\varepsilon) \log n < \phi (n) < (\gamma+\varepsilon) \log n$ for all $n > M$. Put $A \coloneqq \sum_{k=1}^M \phi (k)$. Then, on one hand, by the left-hand inequality of \eqref{bounds for sum of log}, for any $n>M$ we have
\begin{align*}
\frac{n \log n}{\sum \limits_{k=1}^n \phi (k)}
\leq \frac{n \log n}{A + (\gamma-\varepsilon) \sum \limits_{k=M+1}^n \log k} 
\leq \frac{n \log n}{A + \displaystyle (\gamma-\varepsilon) \int_M^n \log x \, dx}.
\end{align*}
On the other hand, by the right-hand inequality of \eqref{bounds for sum of log}, for any $n>M$ we have
\begin{align*}
\frac{n \log n}{\sum \limits_{k=1}^n \phi (k)}
\geq \frac{n \log n}{A + (\gamma+\varepsilon) \sum \limits_{k=M+1}^n \log k} 
\geq \frac{n \log n}{A + \displaystyle (\gamma+\varepsilon) \int_{M+1}^{n+1} \log x \, dx}.
\end{align*}
Thus
\[
\frac{1}{\gamma+\varepsilon} \leq \liminf_{n \to \infty} \frac{n \log n}{\sum \limits_{k=1}^n \phi (k)} \leq \limsup_{n \to \infty} \frac{n \log n}{\sum \limits_{k=1}^n \phi (k)} \leq \frac{1}{\gamma-\varepsilon}.
\]
Letting $\varepsilon \to 0^+$, we obtain the desired convergence.

Now, assume $\gamma = \infty$. Fix $B>0$. Since ${\phi(n)}/{\log n} \to \infty$ as $n \to \infty$, we can find an $M' = M'(B) \in \mathbb{N}$ such that $\phi (n) > B \log n$ for all $n>M'$. By an argument similar to the one in the preceding paragraph,
\[
0 \leq \liminf_{n \to \infty} \frac{n \log n}{\sum \limits_{k=1}^n \phi (k)} \leq \limsup_{n \to \infty} \frac{n \log n}{\sum \limits_{k=1}^n \phi (k)} \leq \frac{1}{B}.
\]
Letting $B \to \infty$, we obtain the desired convergence.
\end{proof}

\begin{lemma} \label{useful lemma}
If $\gamma \in [1, \infty)$, then $\xi = \lim_{n \to \infty} ( \phi (n+1) \big/ \sum_{k=1}^n \phi (k) ) = 0$.
\end{lemma}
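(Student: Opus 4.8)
The plan is to exploit the asymptotics already recorded in Lemma \ref{n log n over sum lemma} together with the defining hypothesis $\phi(n)/\log n \to \gamma$. The key observation is that, for every $n \geq 2$, the quotient in question factors as
\[
\frac{\phi(n+1)}{\sum \limits_{k=1}^n \phi(k)} = \frac{\phi(n+1)}{\log(n+1)} \cdot \frac{\log(n+1)}{n \log n} \cdot \frac{n \log n}{\sum \limits_{k=1}^n \phi(k)}.
\]
So the strategy reduces to computing the limit of each of the three factors separately and multiplying.

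First I would observe that the first factor tends to $\gamma$ by hypothesis, the middle factor tends to $0$ since $\log(n+1)/\log n \to 1$ while $1/n \to 0$, and the third factor tends to $\gamma^{-1}$ by Lemma \ref{n log n over sum lemma}, which is applicable precisely because $\gamma \in [1,\infty)$. As $\gamma \geq 1 > 0$, both $\gamma$ and $\gamma^{-1}$ are finite positive reals, so the product of limits is the unambiguous value $\gamma \cdot 0 \cdot \gamma^{-1} = 0$; there is no indeterminate form. Consequently the ordinary limit $\lim_{n\to\infty}\bigl(\phi(n+1)\big/\sum_{k=1}^n\phi(k)\bigr)$ exists and equals $0$, and hence the $\limsup$ by which $\xi$ is defined also equals $0$.

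This argument is routine, so there is no genuine obstacle; the only points meriting a moment's care are checking that the hypothesis $\gamma \in [1,\infty)$ is exactly what licenses the use of Lemma \ref{n log n over sum lemma} (and keeps $\gamma^{-1}$ finite, so that one really obtains $0$ and not an $\infty\cdot 0$ expression), and noting that once the plain limit is shown to exist it automatically coincides with the $\limsup$. If one wished to avoid invoking the lemma, an equally short alternative is to argue directly from the integral bounds \eqref{bounds for sum of log}: fixing a small $\varepsilon>0$, for large $n$ one bounds $\phi(n+1) < (\gamma+\varepsilon)\log(n+1)$ from above and $\sum_{k=1}^n\phi(k)$ from below by a constant plus $(\gamma-\varepsilon)\int_M^n \log x\,dx$, which grows like $(\gamma-\varepsilon)\,n\log n$, again forcing the quotient to $0$ as $n\to\infty$. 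I would, however, present the factored form above, as it is the cleaner of the two.
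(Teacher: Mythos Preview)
Your proof is correct and follows essentially the same route as the paper: the paper factors the quotient as $\dfrac{n\log n}{\sum_{k=1}^n\phi(k)}\cdot\dfrac{\phi(n+1)}{\log(n+1)}\cdot\dfrac{\log(n+1)}{\log n}\cdot\dfrac{1}{n}$ and invokes Lemma~\ref{n log n over sum lemma} together with $\phi(n)/\log n\to\gamma$, which is exactly your three-factor decomposition with the middle factor split in two.
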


\begin{proof}
Let $\gamma \in [1, \infty)$. Then since ${\phi(n)}/{\log n} \to \gamma$ as $n \to \infty$, we find by Lemma \ref{n log n over sum lemma} that
\begin{align*} 
\frac{\phi(n+1)}{\sum \limits_{k=1}^n \phi (k)} = \frac{n \log n}{\sum \limits_{k=1}^n \phi (k)} \cdot \frac{\phi (n+1)}{\log (n+1)} \cdot \frac{\log (n+1)}{\log n} \cdot \frac{1}{n} \to \gamma^{-1} \cdot \gamma \cdot 1 \cdot 0 = 0
\end{align*}
as $n \to \infty$.
\end{proof}

\begin{lemma} \label{E phi lower bound}
If $\gamma \in [1,\infty]$, then
\[
\hdim E(\phi) \geq \frac{1-\gamma^{-1}}{1 + (1-\gamma^{-1})\xi}.
\]
\end{lemma}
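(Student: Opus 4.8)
The plan is to produce, inside $E(\phi)$, a set of the form $\mathcal{E}^*((u_n)_{n\in\mathbb{N}})$ treated in Lemma~\ref{dimension of  E star} for a well-chosen non-decreasing sequence $(u_n)_{n\in\mathbb{N}}$, and then to read the lower bound directly off that lemma. First I would dispose of the degenerate cases: if $\gamma = 1$, or if $\gamma = \infty$ and $\xi = \infty$, then $\tfrac{1-\gamma^{-1}}{1+(1-\gamma^{-1})\xi} = 0$ and there is nothing to do, so I assume henceforth that either $\gamma \in (1,\infty)$, or $\gamma = \infty$ and $\xi < \infty$; in both cases $\phi(n) - \log n \to \infty$ since $\phi(n)/\log n \to \gamma > 1$. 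Because $\phi(n) - \log n$ need not be monotone, I would pass to its running maximum $M_n \coloneqq \max_{1\le k\le n}(\phi(k)-\log k)$, which is non-decreasing, satisfies $M_n \ge \phi(n) - \log n$, and tends to $\infty$. The key input — and the step I expect to be the main obstacle — is the asymptotic $M_n = \phi(n) - \log n + o(\phi(n))$ as $n \to \infty$, i.e.\ that the running maximum of $\phi(k) - \log k$ is essentially attained near $k = n$.

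To establish this, fix a small $\varepsilon \in (0, \gamma - 1)$ (any $\varepsilon > 0$ if $\gamma = \infty$). For all large $n$ the maximum defining $M_n$ is attained at some index $m = m(n)$ large enough that $\phi$ is already trapped between $(\gamma - \varepsilon)\log(\cdot)$ and $(\gamma + \varepsilon)\log(\cdot)$ at $m$ (when $\gamma = \infty$, use instead $\phi(m) > B\log m$ for $B$ arbitrarily large). Combining $\phi(m) - \log m = M_n \ge \phi(n) - \log n$ with these two-sided bounds on $\phi$ forces $\log m \ge \tfrac{\gamma - 1 - \varepsilon}{\gamma - 1 + \varepsilon}\log n$, hence
\[
0 \le M_n - (\phi(n) - \log n) \le \log n - \log m \le \frac{2\varepsilon}{\gamma - 1 + \varepsilon}\log n ;
\]
dividing by $\phi(n) \ge (\gamma - \varepsilon)\log n$ and letting $\varepsilon \to 0^+$ yields the claim. (For $\gamma = \infty$ one gets the cruder but sufficient bound $0 \le M_n - (\phi(n)-\log n) \le \log n - \log m \le \log n = o(\phi(n))$ straight from $\phi(m) \le \phi(n)$.)

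Next I would set $u_n \coloneqq \max\{2, e^{M_n}\}$, which is non-decreasing and $\ge 2$, so Lemma~\ref{dimension of  E star} applies to the corresponding set $\mathcal{E}^*$. If $x \in \mathcal{E}^*$ then $x$ is irrational and $\log u_n + \log n < \log d_n(x) \le \log u_n + \log(n+1)$; since $\log u_n = M_n = \phi(n) - \log n + o(\phi(n))$ for large $n$ and $\phi(n) \to \infty$, this forces $\log d_n(x) = \phi(n)(1 + o(1))$, so $x \in E(\phi)$. Thus $\mathcal{E}^* \subseteq E(\phi)$ and $\hdim E(\phi) \ge \hdim \mathcal{E}^*$.

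Finally I would evaluate $\eta = \limsup_{n\to\infty} \tfrac{n\log n + \log u_{n+1}}{\log(\prod_{k=1}^n u_k)}$ from Lemma~\ref{dimension of  E star}. Writing $M_k = \phi(k) - \log k + \rho_k$ with $\rho_k \ge 0$ and $\rho_k = o(\phi(k))$ (by the key estimate), one has $\sum_{k=1}^n \log u_k = \sum_{k=1}^n \phi(k) - \log n! + \sum_{k=1}^n \rho_k + O(1)$; since $\rho_k = o(\phi(k))$ implies $\sum_{k=1}^n \rho_k = o\big(\sum_{k=1}^n \phi(k)\big)$, and Proposition~\ref{bounds for factorial lemma} with Lemma~\ref{n log n over sum lemma} gives $\log n! \big/ \sum_{k=1}^n \phi(k) \to \gamma^{-1}$, this yields $\log\big(\prod_{k=1}^n u_k\big) = (1 - \gamma^{-1} + o(1))\sum_{k=1}^n \phi(k)$. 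In the numerator, $n\log n + \log u_{n+1} = n\log n + \phi(n+1) - \log(n+1) + \rho_{n+1}$ for large $n$; dividing by $\sum_{k=1}^n \phi(k)$ term by term, $\tfrac{n\log n}{\sum \phi(k)} \to \gamma^{-1}$ (Lemma~\ref{n log n over sum lemma}), $\tfrac{\log(n+1)}{\sum \phi(k)} \to 0$, $\tfrac{\rho_{n+1}}{\sum \phi(k)} \to 0$ (since $\rho_{n+1} = o(\phi(n+1))$ and $\phi(n+1)/\sum \phi(k)$ is bounded because $\xi < \infty$ in the cases at hand), while $\limsup_n \tfrac{\phi(n+1)}{\sum \phi(k)} = \xi$ and, by Lemma~\ref{useful lemma}, $\xi = 0$ when $\gamma \in (1,\infty)$. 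Hence $\eta = \tfrac{1}{\gamma - 1} + \xi$ (with $\tfrac{1}{\gamma-1} = 0$ when $\gamma = \infty$), and therefore
\[
\hdim E(\phi) \ge \hdim \mathcal{E}^* = (1 + \eta)^{-1} = \Big( \tfrac{1}{1 - \gamma^{-1}} + \xi \Big)^{-1} = \frac{1 - \gamma^{-1}}{1 + (1 - \gamma^{-1})\xi} .
\]
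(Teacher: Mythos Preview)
Your argument is correct, and it lands on the same auxiliary set $\mathcal{E}^*$ from Lemma~\ref{dimension of  E star} that the paper uses, but your choice of $(u_n)$ is substantially more laborious than necessary. The paper simply takes
\[
u_n \coloneqq e^{(1-\gamma^{-1})\phi(n)+1},
\]
which is \emph{automatically} non-decreasing (because $\phi$ is) and $\ge e \ge 2$, and for $x\in\mathcal{E}^*$ one gets $\log d_n(x)/\phi(n) = (1-\gamma^{-1}) + (\log n)/\phi(n) + o(1) \to (1-\gamma^{-1}) + \gamma^{-1} = 1$, so $\mathcal{E}^*\subseteq E(\phi)$ falls out in one line. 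The computation of $\eta$ is then a direct application of Lemma~\ref{n log n over sum lemma} with no error terms to track.

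Your route instead targets $\log d_n(x)\approx\phi(n)$ exactly by taking $\log u_n \approx \phi(n)-\log n$; since $\phi(n)-\log n$ need not be monotone you are forced into the running-maximum $M_n$ and the supporting estimate $M_n=\phi(n)-\log n+o(\phi(n))$, which in turn compels you to separately dispose of the case $\gamma=\infty$, $\xi=\infty$ (where boundedness of $\phi(n+1)/\sum\phi(k)$ fails) and to invoke Lemma~\ref{useful lemma} to make the two computations of $\eta$ agree. All of this is sound, but it is machinery that the paper's scaling trick $u_n=e^{(1-\gamma^{-1})\phi(n)+1}$ sidesteps entirely: multiplying $\phi$ by $1-\gamma^{-1}$ rather than subtracting $\log n$ absorbs the $\log n$ contribution from $\mathcal{E}^*$ asymptotically while preserving monotonicity for free, and it handles all $\gamma\in(1,\infty]$ and all $\xi\in[0,\infty]$ uniformly.
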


\begin{proof}
If $\gamma = 1$, then $\xi=0$ by Lemma \ref{useful lemma}, and hence the inequality holds trivially. Let $\gamma \in (1, \infty]$. Define a sequence $(u_n)_{n \in \mathbb{N}}$ by $u_n \coloneqq e^{(1 - \gamma^{-1} ) \phi (n)+1}$ for each $n \in \mathbb{N}$. Clearly, $2 \leq u_n \leq u_{n+1}$ for all $n \in \mathbb{N}$. By Lemma \ref{n log n over sum lemma},
\begin{align*}
\eta 
&\coloneqq  \limsup_{n \to \infty} \frac{n \log n + \log u_{n+1}}{\log \left( \prod \limits_{k=1}^n u_k \right)} 
= \limsup_{n \to \infty} \frac{n \log n + (1 - \gamma^{-1} ) \phi (n+1)+1}{n + (1 - \gamma^{-1} ) \sum \limits_{k=1}^n \phi (k)} \\
&= \limsup_{n \to \infty} \frac{\left( n \log n \Big/ \sum \limits_{k=1}^n \phi (k) \right) + (1 - \gamma^{-1} ) \left( \phi (n+1) \Big/ \sum \limits_{k=1}^n \phi (k) \right) + \left( n \log n \Big/ \sum \limits_{k=1}^n \phi (k) \right) (1 / n \log n)}{\left( n \log n \Big/ \sum \limits_{k=1}^n \phi (k) \right) (1 / \log n) + (1 - \gamma^{-1} )} \\
&= \frac{\gamma^{-1} + ( 1- \gamma^{-1} ) \xi + (\gamma^{-1})(0)}{(\gamma^{-1})(0) + (1- \gamma^{-1})} = \frac{\gamma^{-1}+ ( 1- \gamma^{-1} ) \xi}{1-\gamma^{-1}}.
\end{align*}
It follows from Lemma \ref{dimension of E star} that 
\[
\hdim \mathcal{E}^* = \left( 1 + \frac{\gamma^{-1} + ( 1- \gamma^{-1} ) \xi}{1 - \gamma^{-1}} \right)^{-1} = \frac{1-\gamma^{-1}}{1 + ( 1- \gamma^{-1} ) \xi},
\]
where $\mathcal{E}^*$ is defined as in \eqref{definition of E un}, i.e.,
\[
\mathcal{E}^* = \left\{ x \in (0,1] : n e^{(1-\gamma^{-1}) \phi (n)+1} < d_n(x) \leq (n+1) e^{(1-\gamma^{-1}) \phi (n)+1} \text{ for all } n \in \mathbb{N} \right\}.
\] 
Then, for any $x \in \mathcal{E}^*$,
\[
1 - \gamma^{-1} + \frac{\log n+1}{\phi (n)} < \frac{\log d_n(x)}{\phi (n)} \leq 1 - \gamma^{-1} + \frac{\log (n+1) + 1}{\phi (n)} \quad \text{for all } n \in \mathbb{N}.
\]
Note that $(\log n)/{\phi (n)} \to \gamma^{-1}$ and ${1}/{\phi (n)} \to 0$ as $n \to \infty$ by the assumptions on $\phi \colon \mathbb{N} \to (0,\infty)$. Hence, $(\log d_n(x)) / \phi (n) \to 1$ as $n \to \infty$, and this shows that $x \in E(\phi)$. Thus, we have the inclusion $E(\phi) \supseteq \mathcal{E}^*$, and so, using monotonicity of the Hausdorff dimension, we find that $\hdim E(\phi) \geq \hdim \mathcal{E}^* = (1 - \gamma^{-1})/[1 + ( 1- \gamma^{-1} ) \xi]$.
\end{proof}

Let
\begin{align} \label{definition of theta}
\theta (n) \coloneqq \frac{n \phi (n)}{\sum \limits_{k=1}^n \phi (k)} \quad \text{for each $n \in \mathbb{N}$} \quad \text{and} \quad \theta \coloneqq \liminf_{n \to \infty} \theta (n).
\end{align}

\begin{lemma} \label{n phi over sum} 
We have $\theta \geq 1$. In particular, if $\gamma \in [1, \infty)$, then $\theta = \lim_{n \to \infty} \theta (n) = 1$.
\end{lemma}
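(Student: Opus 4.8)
The plan is to split the statement into its two assertions and handle each directly.

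For the inequality $\theta \geq 1$, I would argue that in fact $\theta(n) \geq 1$ for every $n \in \mathbb{N}$, with no hypothesis on $\gamma$ needed. The key observation is that $\phi$ is non-decreasing, so $\phi(k) \leq \phi(n)$ for each $1 \leq k \leq n$, and therefore $\sum_{k=1}^n \phi(k) \leq n\phi(n)$. Dividing $n\phi(n)$ by this sum gives $\theta(n) \geq 1$. Taking $\liminf$ over $n$ yields $\theta \geq 1$.

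For the second assertion, assume $\gamma \in [1,\infty)$, so in particular $\gamma > 0$ and $\gamma^{-1}$ is a finite positive number. I would write, for each $n \in \mathbb{N}$,
\[
\theta(n) = \frac{n\phi(n)}{\sum_{k=1}^n \phi(k)} = \frac{n\log n}{\sum_{k=1}^n \phi(k)} \cdot \frac{\phi(n)}{\log n}.
\]
By Lemma \ref{n log n over sum lemma} the first factor converges to $\gamma^{-1}$, and by the standing hypothesis $\phi(n)/\log n \to \gamma$ the second factor converges to $\gamma$; hence $\theta(n) \to \gamma^{-1}\gamma = 1$ as $n \to \infty$. In particular the limit exists and equals $1$, so $\theta = \liminf_{n\to\infty}\theta(n) = \lim_{n\to\infty}\theta(n) = 1$, which also re-confirms $\theta \geq 1$ in this case.

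I do not anticipate any genuine obstacle here: the argument is just a monotonicity estimate together with an application of the already-established Lemma \ref{n log n over sum lemma}. The only minor point worth flagging is that the bound $\theta(n)\geq 1$ holds for all $n$ and uses only that $\phi$ is non-decreasing, whereas the sharper conclusion $\theta(n)\to 1$ genuinely uses the hypothesis $\gamma<\infty$ so that $\gamma^{-1}$ and the product $\gamma^{-1}\gamma$ are well defined.
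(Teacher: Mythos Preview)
Your proposal is correct and matches the paper's own proof essentially line for line: the paper also derives $\theta(n)\geq 1$ from $n\phi(n)\geq \sum_{k=1}^n \phi(k)$ using monotonicity of $\phi$, and then factors $\theta(n) = \big(n\log n / \sum_{k=1}^n \phi(k)\big)\cdot\big(\phi(n)/\log n\big)$ and applies Lemma~\ref{n log n over sum lemma} to get the limit $\gamma^{-1}\cdot\gamma=1$.
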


\begin{proof}
Since $\phi \colon \mathbb{N} \to (0,\infty)$ is non-decreasing and positive real-valued, we have $n \phi (n) \geq \sum_{k=1}^n \phi (k) > 0$, and so $\theta \geq 1$.

The second assertion follows from Lemma \ref{n log n over sum lemma}. Indeed, if $\gamma \in [1, \infty)$, then
\[
\theta (n) = \frac{n \phi (n)}{\sum \limits_{k=1}^n \phi (k)} = \frac{n \log n}{\sum \limits_{k=1}^n \phi (k)} \cdot \frac{\phi (n)}{\log n} \to \gamma^{-1} \cdot \gamma = 1
\]
as $n \to \infty$.
\end{proof}

The following lemma and its proof are motivated by \cite[Theorems 3.2 and 3.6]{SW20} and \cite[Theorem 3.1]{SW21} in which an analogous set for Engel expansions is discussed.

\begin{lemma} \label{E phi upper bound} 
If $\gamma \in [1,\infty]$, then
\[
\hdim E(\phi) \leq \min \{ (1+\xi)^{-1}, \theta - \gamma^{-1} \}.
\]
\end{lemma}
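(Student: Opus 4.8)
The plan is to establish the two upper bounds $\hdim E(\phi)\le(1+\xi)^{-1}$ and $\hdim E(\phi)\le\theta-\gamma^{-1}$ separately; each amounts to covering $E(\phi)$ by an explicit family of intervals and feeding it into the right criterion, and the two families and criteria are genuinely different.

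\emph{The bound $(1+\xi)^{-1}$.} Fix $\epsilon\in(0,1)$. Every $x\in E(\phi)$ is irrational and satisfies $\log d_n(x)/\phi(n)\to1$, so $E(\phi)\subseteq\bigcup_{N\in\mathbb N}E_N^\epsilon$ with $E_N^\epsilon\coloneqq\{x\in(0,1]: e^{(1-\epsilon)\phi(n)}\le d_n(x)\le e^{(1+\epsilon)\phi(n)}\text{ for all }n\ge N\}$; by countable stability of the Hausdorff dimension it is enough to bound $\hdim E_N^\epsilon$. I would cover $E_N^\epsilon$, at each level $n\ge N$, by the ``chunks'' $\mathcal C_n(\sigma)\coloneqq\bigcup_{j}\overline{\mathcal I(\sigma^{(j)})}$, where $\sigma=(\sigma_1,\dots,\sigma_n)\in\Sigma_n$ ranges over those strings with $e^{(1-\epsilon)\phi(k)}\le\sigma_k\le e^{(1+\epsilon)\phi(k)}$ for $N\le k\le n$, and $j$ ranges over the integers in $[\max\{\sigma_n+1,\,e^{(1-\epsilon)\phi(n+1)}\},\,e^{(1+\epsilon)\phi(n+1)}]$. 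These chunks cover $E_N^\epsilon$, since $x\in E_N^\epsilon$ lies in $\mathcal C_n(d_1(x),\dots,d_n(x))$. Because $\sigma_1<\dots<\sigma_{N-1}<\sigma_N\le e^{(1+\epsilon)\phi(N)}$, there are only finitely many admissible initial strings, so the number of chunks is at most a constant (depending on $N,\epsilon$) times $\prod_{k=N}^n e^{(1+\epsilon)\phi(k)}=e^{(1+\epsilon)\sum_{k=N}^n\phi(k)}$. On the other hand, using $\len\mathcal I(\sigma^{(j)})=\bigl(\prod_{k=1}^n\sigma_k^{-1}\bigr)\frac{1}{j(j+1)}$ from \eqref{diam I sigma}, the identity $\sum_{j\ge j_0}\frac1{j(j+1)}=\frac1{j_0}$, and $\sigma_k\ge e^{(1-\epsilon)\phi(k)}$ for $N\le k\le n$ together with $\sigma_k\ge k$ for $k<N$ (Proposition \ref{digit condition lemma}), one gets $\diam\mathcal C_n(\sigma)\le\frac1{(N-1)!}e^{-(1-\epsilon)\sum_{k=N}^{n+1}\phi(k)}\eqqcolon\delta_n\to0$. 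Proposition \ref{box dimension} then yields $\hdim E_N^\epsilon\le\liminf_{n\to\infty}\bigl[(1+\epsilon)\sum_{k=N}^n\phi(k)\bigr]\big/\bigl[(1-\epsilon)\sum_{k=N}^{n+1}\phi(k)\bigr]$; writing this as $\frac{1+\epsilon}{1-\epsilon}\bigl(1+\phi(n+1)/\sum_{k=N}^n\phi(k)\bigr)^{-1}$ and noting $\limsup_n\phi(n+1)/\sum_{k=N}^n\phi(k)=\xi$ (the finite tail $\sum_{k<N}\phi(k)$ does not change the $\limsup$), the value is $\frac{1+\epsilon}{1-\epsilon}(1+\xi)^{-1}$. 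Letting $\epsilon\to0^+$ gives $\hdim E(\phi)\le(1+\xi)^{-1}$.

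\emph{The bound $\theta-\gamma^{-1}$.} If $\gamma=\infty$ there is nothing to do, since then $\theta-\gamma^{-1}=\theta\ge1\ge\hdim E(\phi)$ by Lemma \ref{n phi over sum}. So assume $\gamma\in[1,\infty)$; then $\theta=1$ by Lemma \ref{n phi over sum}, and the claim reduces to $\hdim E(\phi)\le1-\gamma^{-1}$. Fix $\epsilon>0$ and put $H_N^\epsilon\coloneqq\{x\in(0,1]:d_n(x)\text{ is defined and }\log d_n(x)\le(1+\epsilon)\phi(n)\text{ for all }n\ge N\}$, so $E(\phi)\subseteq\bigcup_N H_N^\epsilon$. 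Fix $s\in\bigl(1-\tfrac{\gamma^{-1}}{1+\epsilon},\,1\bigr)$, so that $(1-s)(1+\epsilon)\gamma<1$. Cover $H_N^\epsilon$ at level $n$ by the fundamental intervals $\mathcal I(\sigma)$, $\sigma\in\Sigma_n$, with $\sigma_k\le e^{(1+\epsilon)\phi(k)}$ for $N\le k\le n$: a finite family covering $H_N^\epsilon$. Since $(\len\mathcal I(\sigma))^s<\prod_{k=1}^n\sigma_k^{-s}$ by \eqref{diam I sigma} and every such $\sigma$ is a strictly increasing sequence of integers in $[1,\lfloor b_n\rfloor]$ with $b_n\coloneqq e^{(1+\epsilon)\phi(n)}$, the elementary-symmetric-polynomial inequality $\sum_{1\le\sigma_1<\dots<\sigma_n\le M}\prod_k\sigma_k^{-s}\le\frac1{n!}\bigl(\sum_{m=1}^M m^{-s}\bigr)^n$, together with $\sum_{m=1}^M m^{-s}\le C_s M^{1-s}$ for $s\in(0,1)$ and the lower bound $n!\ge n^n e^{1-n}$ from Proposition \ref{bounds for factorial lemma}, gives
\[
\sum_\sigma(\len\mathcal I(\sigma))^s\le\frac{(C_s\,b_n^{1-s})^n}{n!}\le\exp\Bigl(n\bigl[\log C_s+1+(1-s)(1+\epsilon)\phi(n)-\log n\bigr]\Bigr)
\]
for all large $n$. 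Because $\phi(n)/\log n\to\gamma$ and $(1-s)(1+\epsilon)\gamma<1$, the bracket is eventually $\le-c\log n$ for some $c>0$, so the sum tends to $0$; hence $\mathcal H^s(H_N^\epsilon)=0$ and $\hdim H_N^\epsilon\le s$. Taking the infimum over admissible $s$ and then $\epsilon\to0^+$ gives $\hdim E(\phi)\le\sup_N\hdim H_N^\epsilon\le1-\gamma^{-1}=\theta-\gamma^{-1}$.

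The main obstacle is structural rather than computational: the $(1+\xi)^{-1}$ bound is governed by a $\liminf$ over all levels, so one must use box-counting (Proposition \ref{box dimension}) on a family that absorbs the freedom in the $(n{+}1)$-st digit into the diameter — covering by plain level-$n$ fundamental intervals only yields $1$. The $\theta-\gamma^{-1}$ bound instead needs the honest $s$-dimensional Hausdorff measure, because the level-$n$ fundamental intervals have enormously varying lengths, and its real content is the combinatorial count of admissible strictly increasing strings, handled through the elementary-symmetric-polynomial bound and Proposition \ref{bounds for factorial lemma}. The remaining work — tracking the constants, the finitely many initial digits, and the floor/ceiling adjustments so that they wash out in the limits — is routine.
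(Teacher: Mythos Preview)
Your argument is correct. The first bound $(1+\xi)^{-1}$ follows exactly the paper's route: cover $E_N^\varepsilon$ by ``chunks'' $\bigcup_j\overline{\mathcal I(\sigma^{(j)})}$ absorbing the $(n{+}1)$-st digit, count pieces by $\prod_k e^{(1+\varepsilon)\phi(k)}$, and apply Proposition~\ref{box dimension}.

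For the second bound $\theta-\gamma^{-1}$, however, you take a different path from the paper. The paper reuses the \emph{same} chunk covering $\widetilde{\mathcal J}(\sigma)$ and simply replaces the product count by the binomial bound $\#\widetilde\Upsilon_n\le\binom{\lfloor e^{(1+\varepsilon)\phi(n)}\rfloor}{n}\le e^{n(1+\varepsilon)\phi(n)+(n-1)}/n^n$, then feeds this into Proposition~\ref{box dimension} a second time; the quotient $[n(1+\varepsilon)\phi(n)-n\log n]\big/[(1-\varepsilon)\sum_{k}\phi(k)]$ then converges to $[(1+\varepsilon)\theta-\gamma^{-1}]/(1-\varepsilon)$ via Lemma~\ref{n log n over sum lemma} and the definition of $\theta$. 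You instead dispose of $\gamma=\infty$ trivially, then for finite $\gamma$ cover by the plain fundamental intervals $\mathcal I(\sigma)$ and compute the $s$-dimensional Hausdorff sum directly, controlling $\sum_{\sigma}\prod_k\sigma_k^{-s}$ through the elementary-symmetric-polynomial inequality and Proposition~\ref{bounds for factorial lemma}. Both arguments rest on the same combinatorial fact (strictly increasing strings in $[1,M]$ are few), but the paper's version is slightly slicker in that one covering serves for both bounds and it yields the general inequality $\hdim E(\phi)\le\theta-\gamma^{-1}$ uniformly in $\gamma$, whereas your version is more transparent about \emph{why} the $\theta-\gamma^{-1}$ bound needs a genuinely different mechanism than the $(1+\xi)^{-1}$ bound.
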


\begin{proof}
Let $\gamma \in [1,\infty]$. Fix $\varepsilon \in (0,1)$. For each $m \in \mathbb{N}$, let
\begin{align*}
E_\varepsilon (m, \phi) &\coloneqq \left\{ x \in (0,1] : e^{(1-\varepsilon) \phi (k)} < d_k (x) \leq e^{(1+\varepsilon) \phi (k)} \text{ for all } k \geq m \right\}.
\end{align*}
Note that since $\phi (k)$ is increasing and tending to infinity as $k \to \infty$, so is the difference of bounds for $d_k(x)$ in the definition of $E_\varepsilon (m, \phi)$, that is $e^{(1+\varepsilon) \phi (k)} - e^{(1-\varepsilon) \phi (k)}$ is monotonically increasing to $\infty$ as $k \to \infty$. Hence, there exists $K_1 \in \mathbb{N}$ such that
\[
\lfloor e^{(1-\varepsilon) \phi (n)} \rfloor + 1 \leq \lfloor e^{(1+\varepsilon) \phi (n)} \rfloor \quad \text{for all } n \geq K_1.
\]
Further, since ${\phi (n)}/{\log n} \to \gamma$ as $n \to \infty$, there exists $K_2 \in \mathbb{N}$ such that
\[
n \leq \lfloor e^{(1+\varepsilon) \phi (n)} \rfloor \quad \text{for all } n \geq K_2.
\]
Put $K \coloneqq \max \{ K_1, K_2 \}$.

Let
\[
E_\varepsilon ( \phi) \coloneqq \bigcup_{m \geq K} E_\varepsilon (m, \phi).
\]
Since $E_\varepsilon (m, \phi) \subseteq E_\varepsilon (m+1, \phi)$ for all $m \in \mathbb{N}$ by definition, it is clear that $E(\phi) \subseteq E_\varepsilon ( \phi)$. Now, by monotonicity and countable stability of the Hausdorff dimension, it follows that
\begin{align} \label{hdim E phi upper bound}
\hdim E(\phi) \leq \hdim E_\varepsilon ( \phi) = \sup_{m \geq K} \hdim E_\varepsilon (m, \phi) .
\end{align}

We show that
\begin{align} \label{sufficient condition}
\hdim E_\varepsilon (m, \phi) \leq \min \left\{ \frac{1+\varepsilon}{1-\varepsilon} \frac{1}{1+\xi}, \frac{(1+\varepsilon)\theta - \gamma^{-1}}{1-\varepsilon} \right\}
\end{align}
for any positive integer $m \geq K$. If $E_\varepsilon (m, \phi)$ is empty, then \eqref{sufficient condition} holds trivially since $(1+\xi)^{-1} \geq 0$ by definition of $\xi$ and since $(1+\varepsilon) \theta - \gamma^{-1} \geq (1+\varepsilon) - \gamma^{-1} \geq 0$ by Lemma \ref{n phi over sum} and the hypothesis. So, we assume that $E_\varepsilon (m, \phi)$ is not empty. 

We shall make use of a symbolic space. Let $\widetilde{\Upsilon}_{m-1} \coloneqq \Sigma_{m-1}$, and, for each $n \geq m$, let
\begin{align*}
\widetilde{\Upsilon}_n \coloneqq \{ (\sigma_k)_{k=1}^n \in \Sigma_n : 
\lfloor e^{(1-\varepsilon) \phi (k)} \rfloor + 1 \leq \sigma_{k} \leq \lfloor e^{(1+\varepsilon) \phi (k)} \rfloor \text{ for all } m \leq k \leq n \}.
\end{align*}
Define
\[
\widetilde{\Upsilon} \coloneqq \bigcup_{n \geq m-1} \widetilde{\Upsilon}_n.
\]
For any $\sigma \coloneqq (\sigma_k)_{k=1}^n \in \widetilde{\Upsilon}_n$, $n \geq m-1$, we define the {\em $n$th level basic interval} $\widetilde{\mathcal{J}} (\sigma)$ by
\[
\widetilde{\mathcal{J}} (\sigma) \coloneqq \bigcup_{j = \lfloor e^{(1-\varepsilon) \phi (n+1)} \rfloor +1}^{\lfloor e^{(1+\varepsilon) \phi (n+1)} \rfloor} 
\overline{\mathcal{I} (\sigma^{(j)})}.
\]
Then, similar to \eqref{equivalent definition of E ln rn}, we have
\[
E_\varepsilon (m, \phi) = \bigcap_{n \geq m-1} \bigcup_{\sigma \in \widetilde{\Upsilon}_n} \widetilde{\mathcal{J}} (\sigma),
\]
It follows that, for any $n \geq m-1$, the collection of all $n$th level basic intervals $\{ \widetilde{\mathcal{J}} (\sigma) : \sigma \in \widetilde{\Upsilon}_n \}$ is a covering of $E_\varepsilon (m, \phi)$. We will obtain the upper bound for the Hausdorff dimension of $E_\varepsilon (m, \phi)$ from this covering. 

For any $n \geq m-1$, by using \eqref{diam I sigma}, a general inequality $\lfloor x \rfloor + 1 > x$ for any $x \in \mathbb{R}$, and $e^{(1-\varepsilon) \phi (k)} < \sigma_k$ for $m \leq k \leq n$, we find that
\begin{align} \label{tilde J sigma}
\diam \widetilde{\mathcal{J}} (\sigma) = \len \widetilde{\mathcal{J}} (\sigma)
&= \sum_{j = \lfloor e^{(1-\varepsilon) \phi (n+1)} \rfloor + 1}^{\lfloor e^{(1+\varepsilon) \phi (n+1)} \rfloor} \left[ \left( \prod_{k=1}^n \frac{1}{\sigma_k} \right) \left( \frac{1}{j} - \frac{1}{j+1} \right) \right] \nonumber \\
&= \left( \prod_{k=1}^n \frac{1}{\sigma_k} \right) \left( \frac{1}{\lfloor e^{(1-\varepsilon) \phi (n+1)} \rfloor + 1} - \frac{1}{\lfloor e^{(1+\varepsilon) \phi (n+1)} \rfloor + 1} \right) \nonumber \\
&\leq \left( \prod_{k=m}^n \frac{1}{e^{(1-\varepsilon) \phi (k)}} \right) \frac{1}{e^{(1-\varepsilon) \phi (n+1)}}
= \prod_{k=m}^{n+1} \frac{1}{e^{(1-\varepsilon) \phi (k)}}.
\end{align}

Now, we estimate $\# \{ \widetilde{\mathcal{J}} (\sigma) : \sigma \in \widetilde{\Upsilon}_n \} = \# \widetilde{\Upsilon}_n$ for large enough $n$. For any $\sigma \in \widetilde{\Upsilon}_n$, write $\sigma \coloneqq (\sigma_k)_{k=1}^n$. On the one hand, for any $\sigma \in \widetilde{\Upsilon}_n$, we have
\begin{enumerate}[label=\upshape(\roman*), leftmargin=*, widest=iii]
\item
$\sigma_m \leq \lfloor e^{(1+\varepsilon) \phi (m)} \rfloor$,
\item
$(\sigma_k)_{k=1}^{m} \in \Sigma_{m}$,
\item
$\lfloor e^{(1-\varepsilon) \phi(k)} \rfloor + 1 \leq \sigma_k \leq \lfloor e^{(1+\varepsilon) \phi (k)} \rfloor$ for $m+1 \leq k \leq n$.
\end{enumerate}
Hence,
\begin{align*} 
\# \widetilde{\Upsilon}_n 
&\leq {\lfloor e^{(1+\varepsilon) \phi(m)} \rfloor \choose m} \prod_{k=m+1}^n \left( \lfloor e^{(1+\varepsilon) \phi(k)} \rfloor - \lfloor e^{(1-\varepsilon) \phi(k)} \rfloor \right) 
\\
&\leq e^{m (1+\varepsilon) \phi(m)} \prod_{k=m+1}^n e^{(1+\varepsilon) \phi (k)} .
\end{align*}
On the other hand, since $\widetilde{\Upsilon}_n \subseteq \Sigma_n$ and $\sigma_n \leq \lfloor e^{(1+\varepsilon) \phi(n)} \rfloor$ for any $\sigma  \in \widetilde{\Upsilon}_n$, we have
\begin{align*} 
\# \widetilde{\Upsilon}_n \leq {\lfloor e^{(1+\varepsilon) \phi(n)} \rfloor \choose n} \leq \frac{e^{n(1+\varepsilon) \phi(n)}}{n!} \leq \frac{e^{n(1+\varepsilon) \phi(n)+(n-1)}}{n^n},
\end{align*}
where we used \eqref{bounds for factorial} for the last inequality. So, we obtain the following bound:
\begin{align} \label{bounds for tilde Dn}
\# \widetilde{\Upsilon}_n \leq \min \left\{ e^{m (1+\varepsilon) \phi(m)} \prod_{k=m+1}^n e^{(1+\varepsilon) \phi (k)}, \frac{e^{n(1+\varepsilon) \phi(n)+(n-1)}}{n^n} \right\}.
\end{align}

We make use of Proposition \ref{box dimension} to find the desired upper bound by taking $\delta_n \coloneqq \max \{ \diam \widetilde{\mathcal{J}} (\sigma) : \sigma \in \widetilde{\Upsilon}_n \}$ for each $n \in \mathbb{N}$. It is clear from \eqref{tilde J sigma} that $\delta_n \to 0$ as $n \to \infty$. Hence, by \eqref{tilde J sigma} and the first bound in \eqref{bounds for tilde Dn},
\begin{align*}
\hdim E_\varepsilon (m, \phi)
&\leq \liminf_{n \to \infty} \frac{\log (\# \{ \widetilde{\mathcal{J}} (\sigma) : \sigma \in \widetilde{\Upsilon}_n \})}{\log \left( 1 / \delta_n \right)} 
= \liminf_{n \to \infty} \frac{\log (\# \widetilde{\Upsilon}_n)}{\log \left( 1 / \delta_n \right)} \\
&\leq \liminf_{n \to \infty} \frac{m (1+\varepsilon) \phi (m) + (1+\varepsilon) \sum \limits_{k=m+1}^n \phi (k)}{(1-\varepsilon) \sum \limits_{k=m}^{n+1} \phi (k)} \\
&= \frac{1+\varepsilon}{1-\varepsilon} \liminf_{n \to \infty} \frac{m \phi (m) + \sum \limits_{k=1}^n \phi (k) - \sum \limits_{k=1}^{m} \phi (k)}{\sum \limits_{k=1}^n \phi (k) - \sum \limits_{k=1}^{m-1} \phi (k) + \phi (n+1)} \\
&= \frac{1+\varepsilon}{1-\varepsilon} \liminf_{n \to \infty} \frac{\left( m \phi (m) \Big/ \sum \limits_{k=1}^n \phi (k) \right) + 1 - \left( \sum \limits_{k=1}^{m} \phi (k) \Big/ \sum \limits_{k=1}^n \phi (k) \right)}{1 - \left( \sum \limits_{k=1}^{m-1} \phi (k) \Big/ \sum \limits_{k=1}^n \phi (k) \right) + \left( \phi (n+1) \Big/ \sum \limits_{k=1}^n \phi (k) \right)} \\
&= \frac{1+\varepsilon}{1-\varepsilon} \frac{0+1-0}{1-0+\xi}
= \frac{1+\varepsilon}{1-\varepsilon} \frac{1}{1+\xi}.
\end{align*}
Here, the second-to-last equality follows from the fact that $\phi (n)$ diverges to $\infty$ as $n \to \infty$ and so does $\sum_{k=1}^n \phi (k)$.

Now, using Proposition \ref{box dimension}, together with \eqref{tilde J sigma} and the second bound in \eqref{bounds for tilde Dn}, we find that
\begin{align*}
\hdim E_\varepsilon (m, \phi)
&\leq \liminf_{n \to \infty} \frac{\log (\# \{ \widetilde{\mathcal{J}} (\sigma) : \sigma \in \widetilde{\Upsilon}_n \})}{\log \left( 1 / \delta_n \right)} 
= \liminf_{n \to \infty} \frac{\log (\# \widetilde{\Upsilon}_n)}{\log \left( 1 / \delta_n \right)} \\
&\leq \liminf_{n \to \infty} \frac{n (1+\varepsilon) \phi (n) +(n-1) - n \log n}{(1-\varepsilon) \sum \limits_{k=m}^{n+1} \phi (k)} \\
&\leq \liminf_{n \to \infty} \frac{n (1+\varepsilon) \phi (n) +(n-1) - n \log n}{(1-\varepsilon) \sum \limits_{k=m}^{n} \phi (k)} \\
&= \liminf_{n \to \infty} \frac{(1+\varepsilon) \theta (n) + \left({(n-1)} \Big/{\sum \limits_{k=1}^{n} \phi (k)}\right) - \left({n \log n} \Big/{\sum \limits_{k=1}^{n} \phi (k)}\right)}{(1-\varepsilon) - (1-\varepsilon) \left({\sum \limits_{k=1}^{m-1} \phi (k)} \Big/{\sum \limits_{k=1}^{n} \phi (k)}\right)} \\
&= \frac{(1+\varepsilon) \theta + 0 - \gamma^{-1}}{(1-\varepsilon) - (1-\varepsilon)(0)} 
= \frac{(1+\varepsilon) \theta - \gamma^{-1}}{1-\varepsilon},
\end{align*}
where the second-to-last equality follows from \eqref{definition of theta} and Lemma \ref{n log n over sum lemma}. Combining the upper bounds for $\hdim E_\varepsilon (m, \phi)$ obtained in this and the preceding paragraph yields \eqref{sufficient condition}.

Since \eqref{sufficient condition} holds for any $m \geq K$, on combining \eqref{hdim E phi upper bound} and \eqref{sufficient condition} we infer that
\[
\hdim E(\phi) \leq \min \left\{ \frac{1+\varepsilon}{1-\varepsilon} \frac{1}{1+\xi}, \frac{(1+\varepsilon)\theta - \gamma^{-1}}{1-\varepsilon} \right\}.
\]
Notice that $\hdim E(\phi)$ is independent of $\varepsilon$. Since this inequality holds for arbitrary $\varepsilon \in (0,1)$, it follows by letting $\varepsilon \to 0^+$ that
\[
\hdim E(\phi) \leq \min \{ (1+\xi)^{-1}, \theta - \gamma^{-1} \}.
\]
\end{proof}

\begin{proof}[Proof of Theorem \ref{theorem E phi}]
If $\gamma \in [0, 1)$, then $1-\gamma^{-1} \in [-\infty,0)$, and hence $\hdim E(\phi) = 0 = \max \{ 0, (1-\gamma^{-1})(1+\xi)^{-1} \}$ by Lemma \ref{E phi lemma 1}. If $\gamma \in [1, \infty]$, by Lemmas \ref{E phi lower bound} and \ref{E phi upper bound} we have
\[
\frac{1-\gamma^{-1}}{1 + ( 1- \gamma^{-1} ) \xi} \leq \hdim E(\phi) \leq \min \{ (1+\xi)^{-1}, \theta - \gamma^{-1} \}.
\]
If, further, $\gamma \in [1, \infty)$, then $\xi=0$ by Lemma \ref{useful lemma}, and $\theta - \gamma^{-1} = 1 - \gamma^{-1}$ by Lemma \ref{n phi over sum}; thus, $\hdim E(\phi) = 1-\gamma^{-1} = \max \{ 0, (1-\gamma^{-1})(1+\xi)^{-1} \}$.  Otherwise, $1 - \gamma^{-1} = 1$ by convention, and $\theta - \gamma^{-1} = \theta \geq 1$ by convention and Lemma \ref{n phi over sum}; thus,
\[
\hdim E(\phi) = (1+\xi)^{-1} = \max \{ 0, (1-\gamma^{-1})(1+\xi)^{-1} \}.
\]
On combining these results, the theorem follows.
\end{proof}

\subsubsection{Proofs of corollaries to Theorem \ref{theorem E phi}} \label{corollary to theorem E phi}

\begin{proof} [Proof of Corollary \ref{corollary A alpha}]
The result for $\alpha \in [-\infty, 1)$ follows immediately from the fact that the digits are positive integers. In fact, for any $x \in \mathbb{I}$, since $d_n(x) \geq 1$ for all $n \in \mathbb{N}$, we have $\liminf_{n \to \infty} (d_n(x))^{1/n} \geq 1$. Hence, if $\lim_{n \to \infty} (d_n(x))^{1/n} = \alpha$, then $\alpha \geq 1$. Thus $A(\alpha) = \varnothing$ and $\hdim A(\alpha) = 0$ for any $\alpha \in [-\infty, 1)$.

We prove the corollary for $\alpha \in [1,\infty]$ by considering three separate cases: $\alpha = 1$, $\alpha \in (1, \infty)$, or $\alpha = \infty$. The proof makes use of Theorem \ref{theorem E phi}, and we choose a suitable function $\phi \colon \mathbb{N} \to (0,\infty)$ for each case.

{\sc Case I.}
Let $\alpha = 1$. Define $\phi (n) \coloneqq \sqrt{n}$ for each $n \in \mathbb{N}$. Clearly, $\phi \colon \mathbb{N} \to (0,\infty)$ is non-decreasing, and $\gamma \coloneqq \lim_{n \to \infty} \phi (n)/\log n = \infty$. For any $x \in E( \phi )$,
\[
\lim_{n \to \infty} \frac{\log d_n(x)}{n} = \lim_{n \to \infty} \frac{\log d_n(x)}{\phi (n)} \cdot \frac{\sqrt{n}}{n} = 1 \cdot 0 = 0,
\]
or equivalently $\lim_{n \to \infty} (d_n(x))^{1/n} = 1$. Hence, $E(\phi) \subseteq A(1)$. Note that since the map $x \mapsto \sqrt{x}$ is increasing on $[0, \infty)$, we have
\[
0 \leq \xi \coloneqq \limsup_{n \to \infty} \frac{\phi(n+1)}{\sum \limits_{k=1}^n \phi (k)} = \limsup_{n \to \infty} \frac{\sqrt{n+1}}{\sum \limits_{k=1}^n \sqrt{k}} \leq \lim_{n \to \infty} \frac{\sqrt{n+1}}{\displaystyle \int_0^n \sqrt{x} \, dx} = \lim_{n \to \infty} \frac{\sqrt{n+1}}{\dfrac{2}{3} n \sqrt{n}} = 0,
\]
and so $\xi = 0$. By Theorem \ref{theorem E phi}, $\hdim E(\phi) = (1+\xi)^{-1} = 1$, and it follows that $\hdim A(1) = 1$ by monotonicity of the Hausdorff dimension.

{\sc Case II.}
Let $\alpha \in (1, \infty)$. Define $\phi (n) \coloneqq n \log \alpha$ for each $n \in \mathbb{N}$. Clearly, $\phi \colon \mathbb{N} \to (0,\infty)$ is non-decreasing, and $\gamma \coloneqq \lim_{n \to \infty} \phi (n)/\log n = \infty$. For any $x \in E( \phi )$,
\[
\lim_{n \to \infty} \frac{\log d_n(x)}{n} = \lim_{n \to \infty} \frac{\log d_n(x)}{\phi (n)} \cdot \frac{n \log \alpha}{n} = 1 \cdot \log \alpha = \log \alpha,
\]
or equivalently $\lim_{n \to \infty} (d_n(x))^{1/n} = \alpha$. Hence, $E(\phi) \subseteq A(\alpha)$. Note that
\[
\xi \coloneqq \limsup_{n \to \infty} \frac{\phi(n+1)}{\sum \limits_{k=1}^n \phi (k)} = \lim_{n \to \infty} \frac{(n+1) \log \alpha}{\sum \limits_{k=1}^n ( k \log \alpha )} = \lim_{n \to \infty} \frac{(n+1) \log \alpha}{\dfrac{n(n+1)}{2} \log \alpha} = 0.
\]
By Theorem \ref{theorem E phi}, $\hdim E(\phi) = (1+\xi)^{-1} = 1$, and so $\hdim A(\alpha) = 1$ by monotonicity of the Hausdorff dimension.

{\sc Case III.}
Let $\alpha = \infty$. Define $\phi (n) \coloneqq n^2$ for each $n \in \mathbb{N}$. Clearly, $\phi \colon \mathbb{N} \to (0,\infty)$ is non-decreasing, and $\gamma \coloneqq \lim_{n \to \infty} \phi (n)/\log n = \infty$. For any $x \in E( \phi )$,
\[
\lim_{n \to \infty} \frac{\log d_n(x)}{n} = \lim_{n \to \infty} \frac{\log d_n(x)}{\phi (n)} \cdot \frac{n^2}{n} = 1 \cdot \infty = \infty,
\]
or equivalently $\lim_{n \to \infty} (d_n(x))^{1/n} = \infty$. Hence, $E(\phi) \subseteq A(\infty)$. Note that
\[
\xi \coloneqq \limsup_{n \to \infty} \frac{\phi(n+1)}{\sum \limits_{k=1}^n \phi (k)} = \lim_{n \to \infty} \frac{(n+1)^2}{\sum \limits_{k=1}^n k^2} = \lim_{n \to \infty} \frac{(n+1)^2}{\dfrac{n(n+1)(2n+1)}{6}} = 0.
\]
By Theorem \ref{theorem E phi}, $\hdim E(\phi) = (1+\xi)^{-1} = 1$, and so $\hdim A(\infty) = 1$ by monotonicity of the Hausdorff dimension.
\end{proof}

\begin{proof} [Proof of Corollary \ref{corollary LLN exceptional set}]
Take $\alpha = 1$ in Corollary \ref{corollary A alpha}. Clearly, $A(1)$ is a subset of the set on which \eqref{law of large numbers} fails. This is because the set where \eqref{law of large numbers} holds is exactly $A(e)$, and $A(1)$ and $A(e)$ are disjoint. But $\hdim A(1) = 1$ by the same corollary, and the result follows by monotonicity of the Hausdorff dimension.
\end{proof}

\begin{proof} [Proof of Corollary \ref{corollary E phi}]
Since $\phi \colon \mathbb{N} \to (0, \infty)$ is non-decreasing, it is enough to verify that $\gamma \coloneqq \lim_{n \to \infty} \phi (n)/\log n = \infty$. Then the result will follow from Theorem \ref{theorem E phi}. To see this, let $B> 0$. Since $\phi (n+1) - \phi (n) \to \infty$ as $n \to \infty$, we can find an $M = M(B) \in \mathbb{N}$ such that $\phi (n+1) - \phi (n) > B$ for all $n \geq M$. Then, for any $n \geq M$, we have $\phi (n) \geq \phi (M) + (n-M)B$. Thus,
\[
\liminf_{n \to \infty} \frac{\phi (n)}{\log n} \geq \liminf_{n \to \infty} \frac{\phi (M) + (n-M) B}{\log n} = \infty,
\]
and this proves that $\gamma = \infty$, as was to be shown.
\end{proof}

\subsubsection{Proof of Theorem \ref{theorem A kappa}} \label{Proof of theorem A kappa}

It should be mentioned that the idea of the following proof is borrowed from \cite[proof of Corollary 1]{LW01} in which an analogous set for Engel expansions is discussed.

\begin{proof} [Proof of Theorem \ref{theorem A kappa}]
The result for the case where $\kappa \in (-\infty, 0]$ follows immediately from the fact that the digits are positive integers. In fact, for any $x \in \mathbb{I}$, since $d_n(x) \geq 1$ for all $n \in \mathbb{N}$, we have $\log d_n (x) \geq 0 \geq \kappa n$ for all $n \in \mathbb{N}$. Hence, $A_\kappa = \mathbb{I}$, and obviously $\hdim A_\kappa = 1$.

Let $\kappa \in (0, \infty)$. Choose $\alpha \in \mathbb{R}$ so that $\alpha > e^\kappa$, or equivalently $\log \alpha > \kappa$. Then $\alpha > 1$. Define $u_k \coloneqq 2 \alpha^k$ for each $k \in \mathbb{N}$. Clearly, $2 \leq u_k \leq u_{k+1}$ for all $k \in \mathbb{N}$. Then
\begin{align*}
\eta
&\coloneqq \limsup_{n \to \infty} \frac{n \log n + \log u_{n+1}}{\log \left( \prod \limits_{k=1}^n u_k \right)} 
= \lim_{n \to \infty} \frac{n \log n + \log (2\alpha^{n+1})}{\sum \limits_{k=1}^n (\log 2 + k \log \alpha)} 
= 0 .
\end{align*}
It follows from Lemma \ref{dimension of E star} that $\hdim \mathcal{E}^* = (1+\eta)^{-1} = 1$, where $\mathcal{E}^*$ is defined as in \eqref{definition of E un}, i.e.,
\[
\mathcal{E}^* = \left\{ x \in (0,1] : 2 n \alpha^n < d_n(x) \leq 2 (n+1) \alpha^n \text{ for all } n \in \mathbb{N} \right\}.
\]
Then, for any $x \in \mathcal{E}^*$,
\[
\kappa n < \log (2n) + \kappa n < \log (2n) + n \log \alpha < \log d_n(x) \quad \text{for all } n \in \mathbb{N}.
\]
Thus $\mathcal{E}^* \subseteq A_\kappa$, and so $\hdim A_\kappa = 1$ by monotonicity of the Hausdorff dimension.
\end{proof}

\subsection{The ratio between two consecutive digits}

This subsection is devoted to proving the third main result of this paper and its corollary.

\begin{proof} [Proof of Theorem \ref{theorem B alpha}]
The result for $\alpha \in [-\infty, 1)$ follows immediately from the strict increasing condition for the digits (Proposition \ref{digit condition lemma}(\ref{digit condition lemma 1})). In fact, for any $x \in \mathbb{I}$, since $d_{n+1} (x) > d_n(x)$ for all $n \in \mathbb{N}$, we see that $\liminf_{n \to \infty} d_{n+1}(x)/d_n(x) \geq 1$. Hence, if $\lim_{n \to \infty} d_{n+1}(x)/d_n(x) = \alpha$, then $\alpha \geq 1$. Thus, $B(\alpha) = \varnothing$ and $\hdim B(\alpha) = 0$ for any $\alpha \in [-\infty, 1)$.

For $\alpha \in [1, \infty]$, the proof is mainly based on Lemma \ref{dimension of E star}. We choose suitable sequences $( u_k )_{k \in \mathbb{N}}$ to prove the result. There are three cases to consider according as $\alpha = 1$, $\alpha \in (1, \infty)$, or $\alpha = \infty$.

{\sc Case I}.
Let $\alpha=1$. Define $u_k \coloneqq e^{\sqrt{k}}$ for each $k \in \mathbb{N}$. Clearly, $2 \leq u_k \leq u_{k+1}$ for all $k \in \mathbb{N}$. Since the map $x \mapsto \sqrt{x}$ is increasing on $[0, \infty)$, we obtain
\begin{align*}
0 \leq \eta
&\coloneqq \limsup_{n \to \infty} \frac{n \log n + \log u_{n+1}}{\log \left( \prod \limits_{k=1}^n u_k \right)} 
= \limsup_{n \to \infty} \frac{n \log n + \sqrt{n+1}}{\sum \limits_{k=1}^n \sqrt{k}} \\
&\leq \lim_{n \to \infty} \frac{n \log n + \sqrt{n+1}}{\displaystyle \int_0^n \sqrt{x} \, dx} 
= \lim_{n \to \infty} \frac{n \log n + \sqrt{n+1}}{\dfrac{2}{3}n \sqrt{n}} = 0,
\end{align*}
and hence $\eta = 0$. It follows from Lemma \ref{dimension of E star} that $\hdim \mathcal{E}^* = (1+\eta)^{-1} = 1$, where $\mathcal{E}^*$ is defined as in \eqref{definition of E un}, i.e.,
\[
\mathcal{E}^* = \left\{ x \in (0,1] : n e^{\sqrt{n}} < d_n(x) \leq (n+1) e^{\sqrt{n}} \text{ for all } n \in \mathbb{N} \right\}.
\]
Then, for any $x \in \mathcal{E}^*$,
\[
e^{\sqrt{n+1}-\sqrt{n}} < \frac{d_{n+1}(x)}{d_n(x)} < \left( 1 + \frac{2}{n} \right) e^{\sqrt{n+1}-\sqrt{n}} \quad \text{for all } n \in \mathbb{N},
\]
and hence $\lim_{n \to \infty} d_{n+1}(x)/d_n(x) = 1$. Thus $\mathcal{E}^* \subseteq B(1)$, and we conclude that $\hdim B(1) = 1$ by monotonicity of the Hausdorff dimension.

{\sc Case II}.
Let $\alpha \in (1, \infty)$. Define $u_k \coloneqq e^{\log 2 + k \log \alpha} = 2 \alpha^k$ for each $k \in \mathbb{N}$. Clearly, $2 \leq u_k \leq u_{k+1}$ for all $k \in \mathbb{N}$. Then, as in the proof of Theorem \ref{theorem A kappa} (see Section \ref{Proof of theorem A kappa}), we find that $\hdim \mathcal{E}^* = 1$, where $\mathcal{E}^*$ is defined as in \eqref{definition of E un}, i.e.,
\[
\mathcal{E}^* = \left\{ x \in (0,1] : 2 n \alpha^n < d_n(x) \leq 2 (n+1) \alpha^n \text{ for all } n \in \mathbb{N} \right\}.
\]
Then, for any $x \in \mathcal{E}^*$, 
\[
\alpha < \frac{d_{n+1}(x)}{d_n(x)} < \left( 1 + \frac{2}{n} \right) \alpha \quad \text{for all } n \in \mathbb{N},
\]
and hence $\lim_{n \to \infty} d_{n+1}(x)/d_n(x) = \alpha$. Therefore, $\mathcal{E}^* \subseteq B(\alpha)$, and we conclude that $\hdim B(\alpha) = 1$ by monotonicity of the Hausdorff dimension.

{\sc Case III}.
Let $\alpha = \infty$. Define $u_k \coloneqq e^{k^2}$ for each $k \in \mathbb{N}$. Clearly, $2 \leq u_k \leq u_{k+1}$ for all $k \in \mathbb{N}$. Then
\begin{align*}
\eta
&\coloneqq \limsup_{n \to \infty} \frac{n \log n + \log u_{n+1}}{\log \left( \prod \limits_{k=1}^n u_k \right)} 
= \lim_{n \to \infty} \frac{n \log n + (n+1)^2}{\sum \limits_{k=1}^n k^2} 
= \lim_{n \to \infty} \frac{n \log n + (n+1)^2}{\dfrac{n(n+1)(2n+1)}{6}} 
= 0.
\end{align*}
It follows from Lemma \ref{dimension of E star} that $\hdim \mathcal{E}^* = (1+\eta)^{-1} = 1$, where $\mathcal{E}^*$ is defined as in \eqref{definition of E un}, i.e.,
\[
\mathcal{E}^* = \left\{ x \in (0,1] : n e^{n^2} < d_n(x) \leq (n+1) e^{n^2} \text{ for all } n \in \mathbb{N} \right\}.
\]
Then, for any $x \in \mathcal{E}^*$, 
\[
e^{2n+1} < \frac{d_{n+1}(x)}{d_n(x)}  \quad \text{for all } n \in \mathbb{N},
\]
and hence $\lim_{n \to \infty} d_{n+1}(x)/d_n(x) = \infty$. Therefore, $\mathcal{E}^* \subseteq B(\infty)$, and we conclude that $\hdim B(\infty) = 1$ by monotonicity of the Hausdorff dimension.
\end{proof}

\begin{remarks} \phantomsection \label{remark to the proof of theorem B alpha}
\begin{enumerate}[label=\upshape(\arabic*), ref=\arabic*, leftmargin=*, widest=2]
\item \label{remark to the proof of theorem B alpha 1}
We claim that $B(\alpha) \subseteq A(\alpha)$ for each $\alpha \in [1, \infty]$, where $A(\alpha)$ is defined as in \eqref{definition of A alpha}. To see this, let $\alpha \in [1, \infty)$. Take $x \in B(\alpha)$. Then, for any $\varepsilon \in (0, \alpha)$, there exists $K = K(\varepsilon) \in \mathbb{N}$ such that $(\alpha-\varepsilon) d_n(x) < d_{n+1}(x) < (\alpha+\varepsilon) d_n(x)$ for all $n \geq K$. Hence, for any $n>K$, we have $(\alpha-\varepsilon)^{n-K} d_K(x) < d_n(x) < (\alpha+\varepsilon)^{n-K} d_K(x)$. It follows that
\[
\alpha-\varepsilon \leq \liminf_{n \to \infty} (d_n(x))^{1/n} \leq \limsup_{n \to \infty} (d_n(x))^{1/n} \leq \alpha+\varepsilon,
\]
and by letting $\varepsilon \to 0^+$, we conclude that $(d_n(x))^{1/n} \to \alpha$ as $n \to \infty$, i.e., $x \in A(\alpha)$. This proves $B(\alpha) \subseteq A(\alpha)$ for each $\alpha \in [1, \infty)$. The inclusion $B(\infty) \subseteq A(\infty)$ can be proved in a similar fashion. Since $\hdim B(\alpha) = 1$ for each $\alpha \in [1, \infty]$, it follows by monotonicity of the Hausdorff dimension that $\hdim A(\alpha)=1$ for each $\alpha \in [1, \infty]$. This is an alternative proof of Corollary \ref{corollary A alpha} promised in Remarks \ref{remark to theorem B alpha}(\ref{remark to theorem B alpha 1}).

\item \label{remark to the proof of theorem B alpha 2}
We illustrated above that Corollary \ref{corollary A alpha} is a consequence of Theorem \ref{theorem B alpha}. Recall that we used Theorem \ref{theorem E phi} in the original proof of Corollary \ref{corollary A alpha} (see Section \ref{corollary to theorem E phi}).
However, Theorem \ref{theorem B alpha} does not seem to be a direct consequence of Theorem \ref{theorem E phi}. For instance, $d_{n+1}(x)/d_n(x) \to e$ as $n \to \infty$ for any $x \in B(e)$, and this suggests that $d_n(x)$ behaves like $e^n$ (in some sense) for all sufficiently large $n$. So, if one wishes to use Theorem \ref{theorem E phi}, it is natural to consider the function $\phi \colon \mathbb{N} \to (0, \infty)$ defined by $\phi (n) \coloneqq n$ for each $n \in \mathbb{N}$. In this case, $\hdim E(\phi) = 1$ by Theorem \ref{theorem E phi} (see {\sc Case II} of the proof of Corollary \ref{corollary A alpha} for the details). Let $\sigma \coloneqq (\sigma_n)_{n \in \mathbb{N}}$ be a sequence of positive integers given by
\[
\sigma_n \coloneqq \begin{cases}
\lfloor e^{n-1+\sqrt{n}} \rfloor, &\text{if $n$ is odd}; \\
\lfloor e^{n+ \sqrt{n}} \rfloor, &\text{if $n$ is even}.
\end{cases}
\]
We claim that $\sigma \in \Sigma_{\admissible}$. To see this, first note that $g_1 (x) \coloneqq e^{\sqrt{x+1}} - e^{\sqrt{x}}$ defined on $(1, \infty)$ is increasing. This is because $g_1'(x) = (1/2) (h(x+1)-h(x))$ where, $h(x) \coloneqq e^{\sqrt{x}} / \sqrt{x}$ and $h'(x) = [e^{\sqrt{x}}(\sqrt{x}-1)]/(2 x \sqrt{x}) > 0$ on $(1, \infty)$, so that $h(x+1)>h(x)$ and $g_1'(x) > 0$ on $(1, \infty)$. Hence, for any $k \in \mathbb{N}_0$,
\begin{align*}
\sigma_{2k+3} - \sigma_{2k+2}
&\geq (e^{2k+2+\sqrt{2k+3}}-1) - e^{2k+2+\sqrt{2k+2}} = e^{2k+2} (e^{\sqrt{2k+3}} - e^{\sqrt{2k+2}}) - 1\\
&\geq e^2 (e^{\sqrt{3}}-e^{\sqrt{2}}) - 1 > 10,
\end{align*}
where the last inequality follows from direct calculation. Similarly, the map $g_2(x) \coloneqq e^{2+\sqrt{x+1}} - e^{\sqrt{x}}$ is increasing on $(1, \infty)$, since $g_2'(x) = (1/2) (e^2 h(x+1) - h(x)) > g_1'(x) > 0$ on $(1, \infty)$. Then, for any $k \in \mathbb{N}_0$,
\begin{align*}
\sigma_{2k+2}-\sigma_{2k+1} 
&\geq (e^{2k+2+\sqrt{2k+2}}-1) - e^{2k+\sqrt{2k+1}} = e^{2k} (e^{2+\sqrt{2k+2}} - e^{\sqrt{2k+1}})-1 \\
&\geq e^0 (e^{2+\sqrt{2}}-e^1)-1 > 26,
\end{align*}
where the last inequality follows from direct calculation. Thus, $\sigma \in \Sigma_\infty$ by definition, and by Proposition \ref{strict increasing condition}, $\sigma \in \Sigma_{\admissible}$, i.e., there exists $x \in (0,1]$ such that $(d_n(x))_{n \in \mathbb{N}} = \sigma$. But then $(\log d_n(x)) / \phi (n) = (\log d_n(x)) / n \to 1$ as $n \to \infty$ with
\[
\lim_{k \to \infty} \frac{d_{2k+1}(x)}{d_{2k}(x)} = \lim_{k \to \infty} e^{\sqrt{2k+1}-\sqrt{2k}} = 1
\]
and
\[
\lim_{k \to \infty} \frac{d_{2k+2}(x)}{d_{2k+1}(x)} = \lim_{k \to \infty} e^{2 + \sqrt{2k+2} - \sqrt{2k+1}} = e^2.
\]
Hence, $x \in E(\phi)$ but $x \not \in B(e)$, and thus $E(\phi) \not \subseteq B(e)$, in which case we cannot use the monotonicity argument to deduce that $\hdim B(e) = 1$.
\end{enumerate}
\end{remarks}

\begin{proof} [Proof of Corollary \ref{corollary B kappa}]
Recall that for any $x \in \mathbb{I}$, we have $d_{n+1}(x) > d_n(x)$ for all $n \in \mathbb{N}$ by Proposition \ref{digit condition lemma}(\ref{digit condition lemma 1}). It is clear that if $\kappa \leq 1$, then $B_\kappa = \varnothing$, and hence $\hdim B_\kappa = 0$.

Let $\kappa > 1$. Fix $\alpha \in [1, \kappa)$. Let $\varepsilon > 0$ be small enough so that $\alpha+\varepsilon < \kappa$. If $x$ is any element in
\[
B(\alpha) = \left\{ x \in (0,1] : \lim_{n \to \infty} \frac{d_{n+1}(x)}{d_n(x)} = \alpha \right\},
\]
we can find a positive integer $N = N(x, \varepsilon)$ such that $-\varepsilon < d_{n+1}(x)/d_n(x) - \alpha \leq \varepsilon$, or equivalently
\[
(\alpha-\varepsilon) d_n(x) < d_{n+1}(x) \leq (\alpha+\varepsilon) d_n(x)
\]
for all $n \geq N$. For each $m \in \mathbb{N}$, we define
\[
B_\varepsilon (m, \alpha) \coloneqq \{ x \in (0,1]: (\alpha-\varepsilon) d_n(x) < d_{n+1}(x) \leq (\alpha+\varepsilon) d_n(x) \text{ for all } n \geq m \}.
\]

Now, let
\[
B_\varepsilon (\alpha) \coloneqq \bigcup_{m \in \mathbb{N}} B_\varepsilon (m, \alpha).
\]
Clearly, $B(\alpha) \subseteq B_\varepsilon (\alpha)$. Observe that since $\alpha \geq 1$, we have $\hdim B(\alpha) = 1$ by Theorem \ref{theorem B alpha}. Using monotonicity and countable stability of the Hausdorff dimension, we deduce that
\[
1 = \hdim B(\alpha) \leq \hdim B_\varepsilon (\alpha) = \sup_{m \in \mathbb{N}} \hdim B_\varepsilon (m, \alpha) = \hdim B_\varepsilon (1,\alpha),
\]
where the last equality follows from Lemma \ref{preservation of dimension}(\ref{preservation of dimension 2}) (by taking $K \coloneqq 1$, $h_1 (n) \coloneqq (\alpha-\varepsilon) n$, $h_2(n) \coloneqq n$, $h_3(n) \coloneqq (\alpha+\varepsilon) n$, and $\mathcal{S}(m, h_1, h_2, h_3) \coloneqq B_\varepsilon (m, \alpha)$).
It follows that $\hdim B_\varepsilon (1, \alpha) = 1$. Notice that for any $x \in B_\varepsilon (1, \alpha)$,
\[
\frac{d_{n+1}(x)}{d_n(x)} \leq \alpha + \varepsilon < \kappa \quad \text{for all } n \in \mathbb{N},
\]
so that $x \in B_\kappa$. Therefore, $B_\varepsilon (1, \alpha) \subseteq B_\kappa$, and we conclude that $\hdim B_\kappa = 1$ by monotonicity of the Hausdorff dimension.
\end{proof}

\begin{remark} \label{remark to proof of corollary B kappa}
We show that each $B_\kappa$, $\kappa \in (1, e)$, is an exceptional set to the LLN. To see this, let $\kappa \in (1, e)$, and suppose $x$ is in $B_\kappa$. Then for any $n \in \mathbb{N}$,
\[
d_{n+1}(x) = d_1(x) \frac{d_2(x)}{d_1(x)} \dotsm \frac{d_{n+1}(x)}{d_n(x)} \leq d_1(x) \kappa^n.
\]
Hence, $\limsup_{n \to \infty} (d_n(x))^{1/n} \leq \kappa < e$, and so $(d_n(x))^{1/n} \not \to e$ as $n \to \infty$. But $\hdim B_\kappa = 1$ by Corollary \ref{corollary B kappa}, and thus monotoncity of the Hausdorff dimension implies Corollary \ref{corollary LLN exceptional set}, which answers the Pierce expansion version of \ref{Q1}. Note that in particular, $B_2$ is an exceptional set to the LLN, as mentioned in Remarks \ref{remark B k}(\ref{remark B k 2}).
\end{remark}

\subsection{The ratio between the logarithms of two consecutive digits}

This subsection is devoted to proving the fourth main result of this paper. 

The result for the case where $\alpha = 1$ is immediate from the LLN.

\begin{lemma} \label{alpha one case}
The set $F(1)$ is of full Lebesgue measure on $(0,1]$, and consequently $\hdim F(1) = 1$.
\end{lemma}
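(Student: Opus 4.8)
The plan is to derive the statement directly from the law of large numbers \eqref{law of large numbers}. First I would note, exactly as in the discussion following \eqref{law of the iterated logarithm}, that it is enough to argue on $\mathbb{I}$, since $\mathbb{Q} \cap (0,1]$ is countable and hence both Lebesgue-null and of Hausdorff dimension zero; in particular it contributes nothing to either $\lambda(F(1))$ or $\hdim F(1)$.

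Next I would invoke the LLN in the equivalent form $\log d_n(x)/n \to 1$ as $n \to \infty$, which holds for Lebesgue-almost every $x$; let $G \subseteq \mathbb{I}$ denote the (full-measure) set of irrational $x$ for which this convergence holds. For $x \in G$ I would use Proposition \ref{digit condition lemma}(\ref{digit condition lemma 2}) to get $d_n(x) \geq n$, so that $\log d_n(x) > 0$ for all $n \geq 2$ and $\log d_n(x) \to \infty$; this makes the quotient in \eqref{definition of F alpha} well-defined for all large $n$. Then, for $n \geq 2$,
\[
\frac{\log d_{n+1}(x)}{\log d_n(x)} = \frac{n+1}{n} \cdot \frac{\log d_{n+1}(x)/(n+1)}{\log d_n(x)/n},
\]
and letting $n \to \infty$ the three factors tend to $1$, $1$, and $1$ respectively, so the product tends to $1$ and $x \in F(1)$. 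Hence $G \subseteq F(1)$, and since $\lambda(G) = 1$ we conclude that $F(1)$ has full Lebesgue measure on $(0,1]$.

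Finally, I would recall the standard fact that any subset of $\mathbb{R}$ of positive Lebesgue measure has Hausdorff dimension $1$; since $\lambda(F(1)) = 1 > 0$, this gives $\hdim F(1) = 1$. I do not anticipate any genuine obstacle in this argument: the only small point requiring care is that $\log d_n(x)$ must not vanish in the denominator, which is handled by the bound $d_n(x) \geq n \geq 2$ for $n \geq 2$.
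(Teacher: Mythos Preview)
Your proof is correct and follows essentially the same route as the paper: both use the LLN in the form $\log d_n(x)/n \to 1$ on a full-measure set (the paper calls it $A(e)$, you call it $G$), decompose $\log d_{n+1}(x)/\log d_n(x)$ as a product of three factors each tending to $1$, and conclude $\hdim F(1)=1$ from full Lebesgue measure. Your version is slightly more explicit about the denominator $\log d_n(x)$ being nonzero, but otherwise the arguments coincide.
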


\begin{proof}
Let $x \in A(e)$, where $A(e)$ is defined as in \eqref{definition of A alpha}. Then $(d_n(x))^{1/n} \to e$, or equivalently $(\log d_n(x))/n \to 1$, as $n \to \infty$, and we infer that $x \in F(1)$ by observing that
\[
\lim_{n \to \infty} \frac{\log d_{n+1}(x)}{\log d_n(x)} = \lim_{n \to \infty} \frac{\log d_{n+1}(x)}{n+1} \frac{n}{\log d_n(x)} \frac{n+1}{n} = 1 \cdot 1 \cdot 1 = 1.
\]
Hence, $A(e) \subseteq F(1)$. By the LLN, we know that $A(e)$ is of full Lebesgue measure on $(0,1]$. Thus, $F(1)$ has full Lebesgue measure on $(0,1]$, and so $\hdim F(1)=1$.
\end{proof}

Next, we obtain the lower bound for $\hdim F(\alpha)$, where $\alpha > 1$, by using Theorem \ref{theorem E phi}.

\begin{lemma} \label{F alpha lower bound}
$\hdim F(\alpha) \geq \alpha^{-1}$ for each $\alpha \in (1, \infty)$.
\end{lemma}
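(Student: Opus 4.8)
The plan is to exhibit a set of the form $E(\phi)$ inside $F(\alpha)$ and then invoke Theorem \ref{theorem E phi}. Since a point $x$ with $\log d_n(x) \sim \phi(n)$ should satisfy $\log d_{n+1}(x)/\log d_n(x) \sim \phi(n+1)/\phi(n)$, the natural choice is a function $\phi$ growing geometrically with ratio $\alpha$. Concretely I would set $\phi(n) \coloneqq \alpha^n$ for each $n \in \mathbb{N}$. This is a non-decreasing positive function, and since $\alpha > 1$ we have $\phi(n)/\log n \to \infty$, so that $\gamma = \infty$ in the notation of Theorem \ref{theorem E phi}.

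Next I would compute the quantity $\xi$ attached to this $\phi$. Using the geometric sum $\sum_{k=1}^n \alpha^k = \alpha(\alpha^n - 1)/(\alpha - 1)$, one obtains
\[
\frac{\phi(n+1)}{\sum_{k=1}^n \phi(k)} = \frac{\alpha^{n+1}(\alpha-1)}{\alpha(\alpha^n-1)} = \frac{(\alpha-1)\alpha^n}{\alpha^n - 1} \longrightarrow \alpha - 1
\]
as $n \to \infty$, hence $\xi = \alpha - 1$. Since $\gamma = \infty$, Theorem \ref{theorem E phi} then gives $\hdim E(\phi) = (1+\xi)^{-1} = \alpha^{-1}$.

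It remains to check the inclusion $E(\phi) \subseteq F(\alpha)$. Take $x \in E(\phi)$, so $\log d_n(x)/\alpha^n \to 1$; in particular $\log d_n(x) \to \infty$, so $x$ is irrational and $\log d_n(x) > 0$ for all large $n$, which makes the ratio in the definition of $F(\alpha)$ well defined. Writing
\[
\frac{\log d_{n+1}(x)}{\log d_n(x)} = \frac{\log d_{n+1}(x)}{\alpha^{n+1}} \cdot \frac{\alpha^n}{\log d_n(x)} \cdot \alpha
\]
and letting $n \to \infty$, the right-hand side tends to $1 \cdot 1 \cdot \alpha = \alpha$, so $x \in F(\alpha)$. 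Thus $F(\alpha) \supseteq E(\phi)$, and by monotonicity of the Hausdorff dimension $\hdim F(\alpha) \geq \hdim E(\phi) = \alpha^{-1}$. There is no genuine obstacle here; the only point requiring a little care is the passage from the asymptotics of $\log d_n(x)$ to those of the consecutive ratio, which is handled cleanly by the displayed three-factor decomposition.
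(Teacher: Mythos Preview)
Your proof is correct and is essentially identical to the paper's own argument: both choose $\phi(n)=\alpha^n$, verify $\gamma=\infty$ and $\xi=\alpha-1$ via the geometric series, apply Theorem~\ref{theorem E phi} to get $\hdim E(\phi)=\alpha^{-1}$, and use the same three-factor decomposition of $\log d_{n+1}(x)/\log d_n(x)$ to establish $E(\phi)\subseteq F(\alpha)$. The only difference is the order in which you present the computation of $\xi$ and the inclusion check.
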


\begin{proof}
Fix $\alpha \in (1, \infty)$ and let $\phi (n) \coloneqq \alpha^n$ for each $n \in \mathbb{N}$. Clearly, $\phi \colon \mathbb{N} \to (0,\infty)$ is non-decreasing, and $\gamma \coloneqq \lim_{n \to \infty} \phi (n)/\log n = \infty$. Then, for any $x \in E(\phi)$, where $E(\phi)$ is defined as in \eqref{definition of E phi}, we have
\[
\lim_{n \to \infty} \frac{\log d_{n+1}(x)}{\log d_n(x)} = \lim_{n \to \infty} \frac{\log d_{n+1}(x)}{\phi (n+1)} \frac{\phi (n)}{\log d_n(x)} \frac{\alpha^{n+1}}{\alpha^n} = 1 \cdot 1 \cdot \alpha = \alpha,
\]
and hence $E(\phi) \subseteq F(\alpha)$. Note that
\[
\xi \coloneqq \limsup_{n \to \infty} \frac{\phi(n+1)}{\sum \limits_{k=1}^n \phi (k)} = \lim_{n \to \infty} \frac{\alpha^{n+1}}{\sum \limits_{k=1}^n \alpha^k} = \lim_{n \to \infty} \frac{\alpha^{n+1}}{\dfrac{\alpha (\alpha^n-1)}{\alpha-1}} = \alpha - 1.
\]
By Theorem \ref{theorem E phi}, we have $\hdim E(\phi) = (1+\xi)^{-1} = \alpha^{-1}$, and we conclude that $\hdim F(\alpha) \geq \alpha^{-1}$ by monotonicity of the Hausdorff dimension.
\end{proof}

We now determine the Hausdorff dimension of $F(\infty)$. A similar proof will be used later to establish the desired upper bound for $\hdim F(\alpha)$, where $\alpha \in (1, \infty)$.

\begin{lemma} \label{F infinity}
$\hdim F( \infty) = 0$.
\end{lemma}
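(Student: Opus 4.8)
The plan is to prove that $\hdim F(\infty)\le 1/M$ for every integer $M\ge 2$ and then let $M\to\infty$. For such an $M$ and $m\in\mathbb N$, put $\mathcal S_M(m):=\{x\in(0,1]:(d_n(x))^M<d_{n+1}(x)\text{ for all }n\ge m\}$. If $x\in F(\infty)$, then $x\in\mathbb I$, so $d_n(x)\ge n\ge 2$ for $n\ge 2$ by Proposition \ref{digit condition lemma}(\ref{digit condition lemma 2}), and since $\log d_{n+1}(x)/\log d_n(x)\to\infty$ there is an $m=m(x,M)\ge 2$ with $x\in\mathcal S_M(m)$; hence $F(\infty)\subseteq\bigcup_{m\in\mathbb N}\mathcal S_M(m)$. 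Since $\mathcal S_M(m)=\mathcal S(m,h_1,h_2)$ in the notation of Lemma \ref{preservation of dimension}(\ref{preservation of dimension 1}) with $h_1(n):=n^M$ and $h_2(n):=n$, that lemma (applied with $K:=1$) gives $\hdim\mathcal S_M(m)=\hdim\mathcal S_M(1)$ for all $m$, so by countable stability of the Hausdorff dimension it suffices to show $\hdim\mathcal S_M(1)\le 1/M$.

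To this end I would build, for each $n\ge 2$, a countable cover of $\mathcal S_M(1)$ (note $\mathcal S_M(1)\subseteq\mathbb I$). For $\sigma=(\sigma_k)_{k=1}^n\in\mathbb N^n$ with $\sigma_1\ge 1$ and $\sigma_k^M<\sigma_{k+1}$ for $1\le k\le n-1$ — such $\sigma$ is automatically strictly increasing, hence in $\Sigma_n$ — set
\[
J_\sigma:=\bigcup_{j=\lfloor\sigma_n^M\rfloor+1}^{\infty}\overline{\mathcal I(\sigma^{(j)})}.
\]
Using Proposition \ref{I sigma} exactly as in the proof of Lemma \ref{rough dimension of E lemma}, $J_\sigma$ is an interval with $\diam J_\sigma\le\bigl(\prod_{k=1}^n\sigma_k^{-1}\bigr)\sigma_n^{-M}$. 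Every $x\in\mathcal S_M(1)$ has first $n$ digits of this form with $d_{n+1}(x)>d_n(x)^M$, hence lies in the corresponding $J_\sigma$, so $\{J_\sigma\}_\sigma$ covers $\mathcal S_M(1)$; and the inequalities $\sigma_{k+1}>\sigma_k^M$ force $\sigma_n$ to grow so fast with $n$ that the mesh $\delta_n:=\sup_\sigma\diam J_\sigma$ tends to $0$.

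The heart of the matter is to show that for every $s\in(1/M,1)$ there is $C_M<\infty$, depending on $M$ and $s$ but not on $n$, with $\sum_\sigma(\diam J_\sigma)^s\le C_M$; by the previous paragraph — each level-$n$ cover has mesh $\delta_n\to 0$ — this yields $\mathcal H^s(\mathcal S_M(1))\le C_M<\infty$, hence $\hdim\mathcal S_M(1)\le s$, and letting $s\to(1/M)^+$ gives $\hdim\mathcal S_M(1)\le1/M$, whence $\hdim F(\infty)\le\inf_{M}1/M=0$. One bounds
\[
\sum_\sigma(\diam J_\sigma)^s\le\sum_{\sigma_1\ge 1}\sigma_1^{-s}\sum_{\sigma_2>\sigma_1^M}\sigma_2^{-s}\cdots\sum_{\sigma_{n-1}>\sigma_{n-2}^M}\sigma_{n-1}^{-s}\sum_{\sigma_n>\sigma_{n-1}^M}\sigma_n^{-(M+1)s}
\]
and evaluates it from the inside out via $\sum_{j>t}j^{-\alpha}\le\tfrac{\alpha}{\alpha-1}\,t^{-(\alpha-1)}$ for $t\ge 1$, $\alpha>1$. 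With $\alpha_n:=(M+1)s$ and $\alpha_{k-1}:=s+M(\alpha_k-1)$, carrying out the sum over $\sigma_k$ turns the exponent of $\sigma_{k-1}$ into $\alpha_{k-1}$ and multiplies the running constant by $\alpha_k/(\alpha_k-1)$. The recursion has fixed point $\alpha^*=(M-s)/(M-1)>1$ and obeys $\alpha_k-\alpha^*=M^{\,n-k}(\alpha_n-\alpha^*)$, where $\alpha_n-\alpha^*=M(Ms-1)/(M-1)>0$ precisely because $s>1/M$; thus $\alpha_k>\alpha^*>1$ for all $k$ (so every sum converges) while $\alpha_k\to\infty$ geometrically as $k$ decreases. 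Consequently $\prod_{k=2}^n\alpha_k/(\alpha_k-1)\le\exp\bigl(\sum_{j\ge 0}(\alpha^*-1+M^j c_0)^{-1}\bigr)$ with $c_0:=M(Ms-1)/(M-1)$, which is finite and independent of $n$, and the leftover sum $\sum_{\sigma_1\ge 1}\sigma_1^{-\alpha_1}\le\sum_{m\ge 1}m^{-\alpha^*}<\infty$; their product gives the desired $C_M$.

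The main obstacle is exactly this last estimate: since the first digit $d_1$, and then every $d_k$, ranges over an infinite set, the natural covers are infinite and one must rule out the $s$-sum growing with $n$. What makes it work is that iterated summation forces the exponents $\alpha_k$ to blow up geometrically, and this geometric growth makes the product of the accumulated constants converge; the same mechanism will serve for the upper bound on $\hdim F(\alpha)$ when $\alpha\in(1,\infty)$.
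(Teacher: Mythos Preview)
Your proof is correct and follows the same overall strategy as the paper: pass from $F(\infty)$ to the sets $\mathcal{S}_M(m)=\{x:d_n(x)^M<d_{n+1}(x)\text{ for all }n\ge m\}$, use Lemma~\ref{preservation of dimension}(\ref{preservation of dimension 1}) to reduce to a single $m$, cover by the tail intervals $J_\sigma=\bigcup_{j>\sigma_n^M}\overline{\mathcal I(\sigma^{(j)})}$ with $\diam J_\sigma\le\bigl(\prod_k\sigma_k^{-1}\bigr)\sigma_n^{-M}$, and bound $\sum_\sigma(\diam J_\sigma)^s$ uniformly in $n$ for $s>1/M$.

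Where you diverge is in the bookkeeping of this last iterated sum. The paper first fixes the initial block $(\tau_1,\tau_2)$ (and uses $K=2$), then exploits a self-similarity: rewriting $\sigma_{k-1}^{-s}=\sigma_{k-1}^{-(1+B)s}\cdot\sigma_{k-1}^{Bs}$, the inner sum becomes $\sum_{j>\sigma_{k-1}^B}(\sigma_{k-1}^B/j^{1+B})^s\le\sum_j j^{-sB}=:A$, and the remaining outer sum has exactly the same shape as before. Since this inner sum is eventually $<1$ (because $\sigma_{k-1}\ge k-1$), the telescoping yields a finite constant $A^{M-2}/(\tau_1^s\tau_2^{(1+B)s})$. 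You instead keep the first digit free (using $K=1$), track the evolving exponent $\alpha_k$ via the affine recursion $\alpha_{k-1}=s+M(\alpha_k-1)$, and observe that $\alpha_k-\alpha^*=M^{n-k}(\alpha_n-\alpha^*)$ grows geometrically, so $\prod_k\alpha_k/(\alpha_k-1)$ and the leftover $\zeta(\alpha_1)$ are bounded independently of $n$. Both devices work; the paper's trick buys a fixed exponent at every level at the cost of an extra countable-stability step over $(\tau_1,\tau_2)$, while yours handles all of $\mathcal S_M(1)$ at once but requires the explicit recursion analysis. Your remark that the same mechanism gives the upper bound for $F(\alpha)$, $\alpha\in(1,\infty)$, matches the paper's Lemma~\ref{F alpha upper bound}, which simply takes $B=\alpha-\varepsilon$ and reuses the $F_B(2,\infty)$ estimate.
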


\begin{proof}
Let $B \in (1, \infty)$. If $x \in F(\infty)$, then, since $\log d_{n+1}(x) / \log d_n(x) \to \infty$ as $n \to \infty$, we can find a positive integer $N = N(x, B) \geq 2$ such that
\[
\frac{\log d_{n+1}(x)}{\log d_n(x)} > B, \quad \text{or equivalently} \quad d_n^{B}(x) < d_{n+1}(x)
\]
for all $n \geq N$. For each positive integer $m \geq 2$, we define
\[
F_B (m, \infty) \coloneqq \{ x \in (0,1] : d_n^{B}(x) < d_{n+1}(x) \text{ for all } n \geq m \}.
\]

Now, let
\begin{align*}
F_B (\infty)
&\coloneqq \bigcup_{m \geq 2} F_B (m, \infty).
\end{align*}
Clearly, $F(\infty) \subseteq F_B (\infty)$. Using monotonicity and countable stability of the Hausdorff dimension, we deduce that
\begin{align} \label{F two infinity 1}
\hdim F(\infty) \leq \hdim F_B (\infty) = \sup_{m \geq 2} \hdim F_B (m, \infty) = \hdim F_B (2, \infty),
\end{align}
where the last equality follows from Lemma \ref{preservation of dimension}(\ref{preservation of dimension 1}) (by taking $K \coloneqq 2$, $h_1 (n) \coloneqq n^B$, $h_2(n) \coloneqq n$, and $\mathcal{S}(m, h_1, h_2) \coloneqq F_B (m, \infty)$).

For each $(\tau_1, \tau_2) \in \Sigma_2$, let
\begin{align*}
F_B^{(\tau_1, \tau_2)}(2, \infty) \coloneqq \{ x \in (0,1] : d_1(x) = \tau_1, \, d_2(x) &= \tau_2, \\
\text{ and } d_n^{B}(x) &< d_{n+1}(x) \text{ for all } n \geq 2 \}.
\end{align*}
Then
\[
F_B (2, \infty) = \bigcup_{(\tau_1, \tau_2) \in \Sigma_2} F_B^{(\tau_1, \tau_2)}(2, \infty).
\]
Since $\Sigma_2 \subseteq \mathbb{N}^2$ is countable, again by countable stability we have
\begin{align} \label{F two infinity 2}
\hdim F_B (2, \infty) = \sup_{(\tau_1, \tau_2) \in \Sigma_2} \hdim F_B^{(\tau_1, \tau_2)}(2, \infty).
\end{align}

Fix $(\tau_1, \tau_2) \in \Sigma_2$. We shall make use of a symbolic space. Let $\widehat{\Upsilon}_2 \coloneqq \{ (\tau_1, \tau_2) \}$. For any $n \geq 3$, let
\begin{align*}
\widehat{\Upsilon}_n \coloneqq \{ (\sigma_k)_{k=1}^n \in \Sigma_n : \sigma_1 = \tau_1, \, \sigma_2 = \tau_2, \text{ and } \sigma_k^{B} < \sigma_{k+1} \text{ for all } 2 \leq k \leq n-1 \}.
\end{align*}
It is clear that $\widehat{\Upsilon}_n \neq \varnothing$ since $\mathbb{N}$ is not bounded above. Define
\[
\widehat{\Upsilon} \coloneqq \bigcup_{n \geq 2} \widehat{\Upsilon}_n.
\]
For any $\sigma \coloneqq (\sigma_k)_{k=1}^n \in \widehat{\Upsilon}_n$, $n \geq 2$, we define the {\em $n$th level basic interval} $\widehat{\mathcal{J}} (\sigma)$ by
\[
\widehat{\mathcal{J}} (\sigma) \coloneqq \bigcup_{j \geq \lfloor \sigma_n^{B} \rfloor +1} 
\overline{\mathcal{I} (\sigma^{(j)})}.
\]

We bound the Hausdorff dimension of $F_B^{(\tau_1, \tau_2)} (2, \infty)$ from above by considering its covering. Observe that, similar to \eqref{equivalent definition of E ln rn},
\begin{align*}
F_B^{(\tau_1, \tau_2)} (2, \infty) 
= \bigcap_{n \geq 2} \bigcup_{\sigma \in \widehat{\Upsilon}_n} \widehat{\mathcal{J}} (\sigma).
\end{align*}
Then for any $n \geq 2$, the collection $\{ \widehat{\mathcal{J}}(\sigma) : \sigma \in \widehat{\Upsilon}_n \}$ of all $n$th level basic intervals is a covering of $F_B^{(\tau_1, \tau_2)} (2, \infty)$. By using a general inequality $\lfloor x \rfloor + 1 > x$ for any $x \in \mathbb{R}$ and \eqref{diam I sigma}, we find that
\begin{align} \label{length of J sigma hat}
\begin{aligned}
\diam \widehat{\mathcal{J}} (\sigma)
&= \len \widehat{\mathcal{J}} (\sigma) \\
&= \sum_{j \geq \lfloor \sigma_n^{B} \rfloor+1} \left[ \left( \prod_{k=1}^n \frac{1}{\sigma_k} \right) \left( \frac{1}{j} - \frac{1}{j+1} \right) \right]
= \left( \prod_{k=1}^n \frac{1}{\sigma_k} \right) \frac{1}{\lfloor \sigma_n^{B} \rfloor + 1} \\
&\leq \left( \prod_{k=1}^n \frac{1}{\sigma_k} \right) \frac{1}{\sigma_n^{B}}
= \left( \prod_{k=1}^{n-1} \frac{1}{\sigma_k} \right) \frac{1}{\sigma_n^{1+B}}.
\end{aligned}
\end{align}

Fix $s \in (B^{-1}, \infty)$. We will show that the $s$-dimensional Hausdorff measure of $F_B^{(\tau_1, \tau_2)} (2, \infty)$ is finite. 
Put $A \coloneqq \sum_{n \in \mathbb{N}} n^{-sB}$, which is finite since $sB>1$. For each $n \geq 2$, by making use of the implication
\[
j \in \mathbb{N} \cap \big[ \lfloor \sigma_{n-1}^{B} \rfloor + 1, \infty \big) \subseteq \big( \sigma_{n-1}^{B}, \infty \big)
\implies
\frac{\sigma_{n-1}^{B}}{j} < 1,
\]
we observe that 
\begin{align} \label{less than A}
\sum_{j \geq \lfloor \sigma_{n-1}^{B} \rfloor + 1} \left( \frac{\sigma_{n-1}^{B}}{j^{1+B}} \right)^{s} 
= \sum_{j \geq \lfloor \sigma_{n-1}^{B} \rfloor + 1} \left[ \left( \frac{\sigma_{n-1}^{B}}{j} \right)^{s} \left( \frac{1}{j^{B}} \right)^s \right] 
\leq \sum_{j \geq \lfloor \sigma_{n-1}^{B} \rfloor + 1} \frac{1}{j^{sB}} 
\leq A.
\end{align}
Take an integer $M \geq 2$ such that $\sum_{j \geq n} j^{-sB} < 1$ for all $n > M$. By Proposition \ref{strict increasing lemma}(\ref{strict increasing lemma 1}), we have $\lfloor \sigma_{n-1}^B \rfloor \geq \lfloor (n-1)^B \rfloor \geq \lfloor n-1 \rfloor = n-1$, where the second inequality holds since $B>1$, and so
\begin{align} \label{less than one}
\sum_{j \geq \lfloor \sigma_{n-1}^{B} \rfloor + 1} \left( \frac{\sigma_{n-1}^{B}}{j^{1+B}} \right)^{s} \leq \sum_{j \geq \lfloor \sigma_{n-1}^{B} \rfloor + 1} \frac{1}{j^{sB}} \leq \sum_{j \geq n} \frac{1}{j^{sB}} < 1
\end{align}
for every $n>M$. Hence, for any $n>M$ large enough, by using \eqref{length of J sigma hat}, \eqref{less than one}, and \eqref{less than A}, we obtain
\begin{align*} 
\sum_{\sigma \in \widehat{\Upsilon}_n} [\diam \widehat{\mathcal{J}} (\sigma)]^s
&\leq \sum_{(\sigma_k)_{k=1}^n \in \widehat{\Upsilon}_n} \left[ \left( \prod_{k=1}^{n-1} \frac{1}{\sigma_k} \right) \frac{1}{\sigma_n^{1+B}} \right]^{s} \\
&= \sum_{(\sigma_k)_{k=1}^{n-1} \in \widehat{\Upsilon}_{n-1}} \left[  \left[ \left( \prod_{k=1}^{n-2} \frac{1}{\sigma_k} \right) \frac{1}{\sigma_{n-1}^{1+B}} \right]^{s} \sum_{j \geq \lfloor \sigma_{n-1}^{B} \rfloor + 1} \left( \frac{\sigma_{n-1}^{B}}{j^{1+B}} \right)^{s} \right]  \\
&\leq \sum_{(\sigma_k)_{k=1}^{n-2} \in \widehat{\Upsilon}_{n-2}}  \left[ \left[ \left( \prod_{k=1}^{n-3} \frac{1}{\sigma_k} \right) \frac{1}{\sigma_{n-2}^{1+B}} \right]^{s} \sum_{j \geq \lfloor \sigma_{n-2}^{B} \rfloor + 1} \left( \frac{\sigma_{n-2}^{B}}{j^{1+B}} \right)^{s} \right]  \\
&\leq \dotsb \leq \sum_{(\sigma_k)_{k=1}^{M-1} \in \widehat{\Upsilon}_{M-1}} \left[ \left[ \left( \prod_{k=1}^{M-2} \frac{1}{\sigma_k} \right) \frac{1}{\sigma_{M-1}^{1+B}} \right]^{s} \sum_{j \geq \lfloor \sigma_{M-1}^{B} \rfloor + 1} \left( \frac{\sigma_{M-1}^{B}}{j^{1+B}} \right)^{s} \right]  \\
&\leq A \sum_{(\sigma_k)_{k=1}^{M-2} \in \widehat{\Upsilon}_{M-2}} \left[ \left[ \left( \prod_{k=1}^{M-3} \frac{1}{\sigma_k} \right) \frac{1}{\sigma_{M-2}^{1+B}} \right]^{s} \sum_{j \geq \lfloor \sigma_{M-2}^B \rfloor + 1} \left( \frac{\sigma_{M-2}^B}{j^{1+B}} \right)^s \right]  \\
&\leq \dotsb
\leq A^{M-3} \sum_{(\sigma_k)_{k=1}^{2} \in \widehat{\Upsilon}_{2}} \left[ \left( \frac{1}{\sigma_1 \sigma_{2}^{1+B}} \right)^{s} \sum_{j \geq \lfloor \sigma_{2}^B \rfloor + 1} \left( \frac{\sigma_{2}^B}{j^{1+B}} \right)^s \right]  \\
&\leq A^{M-2} \sum_{(\sigma_1, \sigma_2) \in \widehat{\Upsilon}_2} \left( \frac{1}{\sigma_1 \sigma_2^{1+B}} \right)^s 
= \frac{A^{M-2}}{\tau_1^s \tau_2^{s(1+B)}}.
\end{align*}
It follows that
\begin{align*}
\mathcal{H}^{s} (F_B^{(\tau_1, \tau_2)} (2, \infty))
&\leq \liminf_{n \to \infty} \sum_{\sigma \in \widehat{\Upsilon}_n} [\diam \widehat{\mathcal{J}} (\sigma) ]^{s}
< \infty,
\end{align*}
as was to be shown.
Since $s \in (B^{-1}, \infty)$ was arbitrary, we deduce that $\hdim F_B^{(\tau_1, \tau_2)} (2, \infty) \leq B^{-1}$. The sequence $(\tau_1, \tau_2) \in \Sigma_2$ was arbitrary as well, so that on combining this result with \eqref{F two infinity 1} and \eqref{F two infinity 2}, we infer that 
\begin{align} \label{F infinity F B 2 infinity}
\hdim F(\infty) \leq \hdim F_B(2,\infty) \leq B^{-1}. 
\end{align}
Notice that $F(\infty)$ is independent of $B$, and so is $\hdim F(\infty)$. Therefore, by letting $B \to \infty$, we conclude that $\hdim F (\infty) = 0$.
\end{proof}

As promised, using the preceding proof and a similar argument, we establish the upper bound for $\hdim F(\alpha)$, where $\alpha \in (1, \infty)$.

\begin{lemma} \label{F alpha upper bound}
$\hdim F(\alpha) \leq \alpha^{-1}$ for each $\alpha \in (1, \infty)$.
\end{lemma}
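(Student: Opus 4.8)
The plan is to re-use the covering argument already carried out in the proof of Lemma \ref{F infinity} without any new computation. The point is that for $\alpha \in (1,\infty)$ the set $F(\alpha)$ is, for \emph{every} $B \in (1,\alpha)$, contained in the very same auxiliary set $F_B(\infty) = \bigcup_{m \geq 2} F_B(m, \infty)$ that was analyzed there. Indeed, if $x \in F(\alpha)$, then $\log d_{n+1}(x)/\log d_n(x) \to \alpha > B$; since $d_n(x) \geq n \geq 2$ for all $n \geq 2$ by Proposition \ref{digit condition lemma}(\ref{digit condition lemma 2}), we have $\log d_n(x) > 0$ for $n \geq 2$, so there is an integer $N = N(x, B) \geq 2$ with $d_n^B(x) < d_{n+1}(x)$ for all $n \geq N$, i.e., $x \in F_B(N, \infty) \subseteq F_B(\infty)$. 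Hence $F(\alpha) \subseteq F_B(\infty)$.

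\textbf{Key steps.} First I would fix $\alpha \in (1,\infty)$ and an arbitrary $B \in (1,\alpha)$ and record the inclusion $F(\alpha) \subseteq F_B(\infty)$ just explained. Second, I would invoke the bound $\hdim F_B(\infty) \leq B^{-1}$, which is established inside the proof of Lemma \ref{F infinity}: there, countable stability, Lemma \ref{preservation of dimension}(\ref{preservation of dimension 1}), and the Hausdorff-measure estimate $\mathcal{H}^s(F_B^{(\tau_1,\tau_2)}(2,\infty)) < \infty$ for every $s > B^{-1}$ (where the hypothesis $B > 1$ enters through $\lfloor \sigma_{n-1}^B \rfloor \geq n-1$) combine to give $\hdim F_B(\infty) = \hdim F_B(2,\infty) \leq B^{-1}$; none of those steps uses $F(\infty)$ itself, only the structure of $F_B(\infty)$. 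Third, monotonicity of the Hausdorff dimension yields $\hdim F(\alpha) \leq \hdim F_B(\infty) \leq B^{-1}$. Finally, since $B \in (1,\alpha)$ was arbitrary and $\hdim F(\alpha)$ is independent of $B$, letting $B \to \alpha^-$ gives $\hdim F(\alpha) \leq \alpha^{-1}$.

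\textbf{Main obstacle.} There is essentially no new obstacle: the hard analytic work — the $s$-dimensional Hausdorff measure computation controlling the covering by the basic intervals $\widehat{\mathcal{J}}(\sigma)$ — was already done for arbitrary $B \in (1,\infty)$ in Lemma \ref{F infinity}, and the constraint $B > 1$ there is automatically met since $B < \alpha$ with $\alpha > 1$ only pushes $B$ toward $\alpha$. The single point that deserves a line of care is confirming that the estimate $\hdim F_B(2,\infty) \leq B^{-1}$ in Lemma \ref{F infinity} was genuinely proved for each fixed such $B$ (not merely in the limit $B \to \infty$), so that it may be applied with a fixed $B \in (1,\alpha)$ — which is indeed the case. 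Combined with Lemmas \ref{alpha one case} and \ref{F alpha lower bound} this completes the identity $\hdim F(\alpha) = \alpha^{-1}$ for $\alpha \in [1,\infty)$, and together with Lemma \ref{F infinity} proves Theorem \ref{theorem ratio of logarithms}.
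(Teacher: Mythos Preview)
Your proposal is correct and follows essentially the same route as the paper: both arguments reduce to the inclusion into $F_B(2,\infty)$ with $B$ slightly below $\alpha$, invoke the bound $\hdim F_B(2,\infty) \leq B^{-1}$ established in Lemma~\ref{F infinity}, and then let $B \to \alpha^-$ (equivalently, $\varepsilon \to 0^+$ with $B = \alpha - \varepsilon$). Your version is in fact slightly more streamlined, since the paper first introduces the two-sided sets $\widehat{F}_\varepsilon(m,\alpha)$ and applies Lemma~\ref{preservation of dimension}(\ref{preservation of dimension 2}) before immediately discarding the upper constraint to land in $F_B(2,\infty)$, whereas you go there directly.
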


\begin{proof}
Fix $\alpha \in (1, \infty)$. Let $\varepsilon > 0$ be small enough so that $\alpha-\varepsilon>1$. If $x \in F(\alpha)$, then since $\log d_{n+1}(x) / \log d_n(x) \to \alpha$ as $n \to \infty$, we can find a positive integer $N = N(x, B) \geq 2$ such that
\[
-\varepsilon < \frac{\log d_{n+1}(x)}{\log d_n(x)} - \alpha \leq \varepsilon, \quad \text{or equivalently} \quad d_n^{\alpha-\varepsilon}(x) < d_{n+1}(x) \leq d_n^{\alpha+\varepsilon}(x),
\]
for all $n \geq N$. For each positive integer $m \geq 2$, we define
\[
\widehat{F}_\varepsilon (m, \alpha) \coloneqq \{ x \in (0,1] : d_n^{\alpha-\varepsilon}(x) < d_{n+1}(x) \leq d_n^{\alpha+\varepsilon}(x) \text{ for all } n \geq m \}.
\]

Now, let
\begin{align*}
\widehat{F}_\varepsilon (\alpha)
&\coloneqq \bigcup_{m \geq 2} \widehat{F}_\varepsilon (m, \alpha).
\end{align*}
Clearly, $F (\alpha) \subseteq \widehat{F}_\varepsilon (\alpha)$. Using monotonicity and countable stability of the Hausdorff dimension, we deduce that
\begin{align} \label{F two alpha 1}
\hdim F (\alpha) \leq \hdim \widehat{F}_\varepsilon (\alpha) = \sup_{m \geq 2} \hdim \widehat{F}_\varepsilon (m, \alpha) = \hdim \widehat{F}_\varepsilon (2, \alpha),
\end{align}
where the last equality follows from Lemma \ref{preservation of dimension}(\ref{preservation of dimension 2}) (by taking $K \coloneqq 2$, $h_1 (n) \coloneqq n^{\alpha-\varepsilon}$, $h_2(n) \coloneqq n$, $h_3(n) \coloneqq n^{\alpha+\varepsilon}$, and $\mathcal{S}(m, h_1, h_2, h_3) \coloneqq \widehat{F}_\varepsilon (m, \alpha)$).

Taking $B \coloneqq \alpha - \varepsilon$, which is strictly greater than $1$, in the proof of Lemma \ref{F infinity}, we see that
\begin{align*}
\widehat{F}_\varepsilon (2, \alpha) 
&= \{ x \in (0,1] : d_n^{\alpha-\varepsilon}(x) < d_{n+1}(x) \leq d_n^{\alpha+\varepsilon}(x) \text{ for all } n \geq 2 \} \\
&\subseteq \{ x \in (0,1] : d_n^{B}(x) < d_{n+1}(x) \text{ for all } n \geq 2 \}
= F_B (2, \infty).
\end{align*}
Hence, by monotonicity of the Hausdorff dimension and \eqref{F infinity F B 2 infinity}, it follows that
\begin{align} \label{F two alpha 2}
\hdim \widehat{F}_\varepsilon (2, \alpha) \leq \hdim F_B(2, \infty) \leq B^{-1} = (\alpha-\varepsilon)^{-1}.
\end{align}
By combining \eqref{F two alpha 1} and \eqref{F two alpha 2}, we deduce that $\hdim F (\alpha) \leq (\alpha-\varepsilon)^{-1}$. Note that $F(\alpha)$ is independent of $\varepsilon$, and so is $\hdim F(\alpha)$. Thus, by letting $\varepsilon \to 0^+$, we conclude that $\hdim F (\alpha) \leq \alpha^{-1}$.
\end{proof}

\begin{proof}[Proof of Theorem \ref{theorem ratio of logarithms}]
The result for the case where $\alpha \in [-\infty, 1)$ follows immediately from the strict increasing condition for digits (Proposition \ref{digit condition lemma}(\ref{digit condition lemma 1})). In fact, for any $x \in \mathbb{I}$, since $d_{n+1} (x) > d_n(x)$ for all $n \in \mathbb{N}$, we have $\liminf_{n \to \infty} \log d_{n+1}(x) / \log d_n(x) \geq 1$. Hence, if $\lim_{n \to \infty} \log d_{n+1}(x) / \log d_n(x) = \alpha$, then $\alpha \geq 1$. Thus, $F(\alpha) = \varnothing$ and $\hdim F(\alpha) = 0$ for any $\alpha \in [-\infty, 1)$.

For each $\alpha \in [1, \infty]$, combine Lemmas \ref{alpha one case}--\ref{F alpha upper bound} to deduce that $\hdim F(\alpha) = \alpha^{-1}$ with the convention $\infty^{-1} = 0$.
\end{proof}

\subsection{The central limit theorem and the law of the iterated logarithm}

This subsection is devoted to proving the results for the sets related to the CLT and the LIL, respectively. We first prove a more general (in some sense) result, Theorem \ref{theorem C psi beta}. 

\begin{proof}[Proof of Theorem \ref{theorem C psi beta}]
We first note that since $\psi(n) n^{-1} \to 0$ as $n \to \infty$ by \eqref{psi n over n and sum of f over n squared} and since $\psi$ is positive real-valued by the hypothesis,
\begin{align} \label{n over psi n}
\lim_{n \to \infty} \frac{n}{\psi (n)} = \infty
\quad \text{and} \quad
\lim_{n \to \infty} \frac{\log ( 1 + \psi (n) n^{-1} )}{\psi (n) n^{-1}} = 1.
\end{align}

We consider three cases according to whether $\beta = \infty$, $\beta = -\infty$, or $\beta \in (-\infty, \infty)$.

{\sc Case I.}
Let $\beta = \infty$. Consider the set $A(e^2)$ defined in \eqref{definition of A alpha} and of full Hausdorff dimension by Corollary \ref{corollary A alpha}. For any $x \in A(e^2)$, since $(d_n(x))^{1/n} \to e^2$ as $n \to \infty$, we find by using \eqref{n over psi n} that
\[
\lim_{n \to \infty} \frac{\log d_n(x)-n}{\psi (n)} = \lim_{n \to \infty} \frac{\log d_n(x)-n}{n} \cdot \frac{n}{\psi(n)} = (2-1) \cdot \infty = \infty.
\]
So $A(e^2) \subseteq C(\psi, \infty)$, and we conclude that $\hdim C(\psi, \infty) = 1$ by monotonicity of the Hausdorff dimension.

{\sc Case II.}
Let $\beta = -\infty$. Consider the set $A(e^{1/2})$ defined in \eqref{definition of A alpha} and of full Hausdorff dimension by Corollary \ref{corollary A alpha}. For any $x \in A(e^{1/2})$, since $(d_n(x))^{1/n} \to e^{1/2}$ as $n \to \infty$, we find by using \eqref{n over psi n} that
\[
\lim_{n \to \infty} \frac{\log d_n(x)-n}{\psi (n)} = \lim_{n \to \infty} \left( \frac{\log d_n(x)-n}{n} \cdot \frac{n}{\psi (n)} \right)  = \left( \frac{1}{2} - 1 \right) \cdot \infty = -\infty.
\]
So $A(e^{1/2}) \subseteq C(\psi, -\infty)$, and we conclude that $\hdim C(\psi, -\infty) = 1$ by monotonicity of the Hausdorff dimension.

{\sc Case III.}
Let $\beta \in (-\infty, \infty)$. Let $f \colon \mathbb{N} \to \mathbb{R}$ be defined by $f(n) \coloneqq n + \beta \psi (n)$ for each $n \in \mathbb{N}$. It follows by Lemma \ref{dimension of E check} that $\hdim \mathcal{E} = 1$, where $\mathcal{E} = \mathcal{E} ((l_n)_{n \in \mathbb{N}}, (r_n)_{n \in \mathbb{N}})$ is defined as in \eqref{definition of E ln rn} and $(l_n)_{n \in \mathbb{N}}$ and $(r_n)_{n \in \mathbb{N}}$ are defined as in \eqref{ln rn definition}. Then, for any $x \in \mathcal{E}$, we have
\[
\beta < \frac{\log d_n(x) - n}{\psi(n)} \leq \beta + \frac{1}{n} \frac{\log ( 1 + \psi (n) n^{-1})}{\psi (n) n^{-1}}
\]
for all sufficiently large $n \in \mathbb{N}$, where the right-hand side converges to $\beta$ as $n \to \infty$ by \eqref{n over psi n}.
Hence, $x \in C(\psi, \beta)$, and this proves that $\mathcal{E} \subseteq C(\psi, \beta)$. Thus $\hdim C(\psi, \beta) = 1$ by monotonicity of the Hausdorff dimension.
\end{proof}

\subsubsection{The CLT}

We divide the proof of Theorem \ref{theorem E alpha beta} into three lemmas according to the value of $\alpha$: $\alpha \in (-\infty, 1)$, $\alpha = 1$, or $\alpha \in (1, \infty)$.

\begin{lemma} \label{alpha less than one}
$\hdim E(\alpha, \beta) = 1$ for all $\alpha \in (-\infty, 1)$ and $\beta \in [ -\infty, \infty ]$.
\end{lemma}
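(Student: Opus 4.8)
The plan is to obtain this lemma as an immediate consequence of Theorem \ref{theorem C psi beta}. The defining relation of $E(\alpha,\beta)$ is precisely the defining relation of the set $C(\psi,\beta)$ in that theorem once we choose $\psi(n)\coloneqq n^{\alpha}$; so the whole argument reduces to checking that, for every fixed $\alpha\in(-\infty,1)$, this particular $\psi$ meets hypotheses (\ref{theorem C psi beta 1}) and (\ref{theorem C psi beta 2}) of Theorem \ref{theorem C psi beta}.

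First I would fix $\alpha\in(-\infty,1)$ and put $\psi(n)\coloneqq n^{\alpha}$, which is a positive real-valued function on $\mathbb{N}$ as required. To verify (\ref{theorem C psi beta 1}), I would distinguish two ranges. If $\alpha\le 0$, then $\psi(n)=n^{\alpha}$ converges as $n\to\infty$ (to $1$ when $\alpha=0$, to $0$ when $\alpha<0$), so $\psi(n+1)-\psi(n)\to 0$. If $\alpha\in(0,1)$, then the mean value theorem gives $\psi(n+1)-\psi(n)=\alpha\,t_{n}^{\alpha-1}$ for some $t_{n}\in(n,n+1)$, and since $\alpha-1<0$ this quantity lies between $0$ and $\alpha n^{\alpha-1}$, hence tends to $0$. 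To verify (\ref{theorem C psi beta 2}), for any $p>0$ I would simply note that $e^{pn}\psi(n)=\exp(pn+\alpha\log n)\to\infty$, because $pn$ dominates $\alpha\log n$ as $n\to\infty$.

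Having checked both hypotheses, Theorem \ref{theorem C psi beta} yields $\hdim C(\psi,\beta)=1$ for every $\beta\in[-\infty,\infty]$. Since $\psi(n)=n^{\alpha}$, the two sets coincide, $C(\psi,\beta)=E(\alpha,\beta)$, and therefore $\hdim E(\alpha,\beta)=1$ for all $\alpha\in(-\infty,1)$ and $\beta\in[-\infty,\infty]$, as claimed.

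There is essentially no real obstacle here; the only point that calls for a moment's care is the verification of (\ref{theorem C psi beta 1}) when $\alpha\in(0,1)$, where $\psi(n)=n^{\alpha}$ itself diverges and one must observe that its increments nonetheless vanish — and this is exactly where the hypothesis $\alpha<1$ is used. (When $\alpha=1$ the increments tend to $1\neq 0$, which is why that borderline case is handled separately in Theorem \ref{theorem E alpha beta}.)
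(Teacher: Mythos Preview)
Your proposal is correct and follows essentially the same approach as the paper: both set $\psi(n)\coloneqq n^{\alpha}$, verify hypotheses (\ref{theorem C psi beta 1}) and (\ref{theorem C psi beta 2}) of Theorem~\ref{theorem C psi beta}, and then identify $E(\alpha,\beta)=C(\psi,\beta)$. The only difference is cosmetic: for $\alpha\in(0,1)$ the paper verifies $\psi(n+1)-\psi(n)\to 0$ via L'H\^{o}pital's rule, whereas your mean value theorem argument is a bit cleaner.
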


\begin{proof}
Let $\alpha \in (-\infty, 1)$ and $\beta \in [ -\infty, \infty ]$. Let $\psi \colon \mathbb{N} \to (0, \infty)$ be defined by $\psi (n) \coloneqq n^\alpha$ for each $n \in \mathbb{N}$.

\begin{claim}
$\psi (n+1) - \psi (n) \to 0$ as $n \to \infty$.
\end{claim}

\begin{proof} [Proof of Claim] \renewcommand\qedsymbol{$\blacksquare$}
If $\alpha=0$, then the convergence holds trivially since $\psi (n+1) - \psi (n) \equiv 0$. If $\alpha \in (-\infty, 0)$, then $(n+1)^\alpha$ and $n^\alpha$ both converge to $0$ as $n \to \infty$, and hence $\psi (n+1) - \psi (n) \to 0$ as $n \to \infty$. It remains to consider the case where $\alpha \in (0,1)$. Put
\[
g(x) \coloneqq (x+1)^\alpha - x^\alpha = x^\alpha \left[ \left( 1 + \frac{1}{x} \right)^\alpha-1 \right] = \frac{\left( 1 + x^{-1} \right)^\alpha-1}{x^{-\alpha}}
\]
for $x \in (0, \infty)$. Notice that both $\left( 1 + x^{-1} \right)^\alpha-1$ and $x^{-\alpha}$ tend to $0$ as $x \to \infty$. Now, using L'H{\^o}spital's rule, we see that
\begin{align*}
\lim_{x \to \infty} g(x)
&= \lim_{x \to \infty} \frac{\alpha \left( 1 + x^{-1} \right)^{\alpha-1} \left( - x^{-2} \right)}{-\alpha x^{-\alpha-1}}
= \lim_{x \to \infty} \frac{\left( 1 + x^{-1} \right)^{\alpha-1}}{x^{-\alpha+1}} 
= \lim_{x \to \infty} (x+1)^{\alpha-1} 
= 0,
\end{align*}
where the last equality holds since $\alpha - 1 < 0$. Thus $g(n) = \psi (n+1) - \psi (n) \to 0$ as $n \to \infty$, and the claim is proved.
\end{proof}

It is clear that $e^{pn} \psi (n) = e^{pn} n^\alpha \to \infty$ as $n \to \infty$ for any $p>0$. Hence, $\psi \colon \mathbb{N} \to (0, \infty)$ satisfies conditions (\ref{theorem C psi beta 1}) and (\ref{theorem C psi beta 2}) in Theorem \ref{theorem C psi beta}. Now, notice that $E(\alpha, \beta) = C(\psi, \beta)$, where $C(\psi, \beta)$ is defined as in \eqref{definition of C psi beta}. Since $\hdim C(\psi, \beta) = 1$ by Theorem \ref{theorem C psi beta}, we conclude that $\hdim E(\alpha, \beta) = 1$.
\end{proof}

\begin{lemma} \label{alpha one}
For each $\beta \in [-\infty, \infty]$,
\[
\hdim E(1, \beta) =
\begin{cases}
1, &\text{if } \beta \in [-1, \infty]; \\
0, &\text{if } \beta \in [-\infty, -1), \text{ because } E(1, \beta) = \varnothing.
\end{cases}
\]
\end{lemma}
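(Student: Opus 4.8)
The plan is to recognise $E(1,\beta)$ as one of the sets $A(\alpha)$ already handled in Corollary~\ref{corollary A alpha} and then read off the Hausdorff dimension. The key observation is the elementary identity
\[
\frac{\log d_n(x) - n}{n} = \log\!\big( (d_n(x))^{1/n} \big) - 1 ,
\]
valid for every $x \in \mathbb{I}$ and every $n \in \mathbb{N}$, which shows that the condition $\lim_{n\to\infty}(\log d_n(x)-n)/n = \beta$ defining membership in $E(1,\beta)$ is equivalent to $\lim_{n\to\infty}(d_n(x))^{1/n} = e^{1+\beta}$. Here the passage between the two limits uses the continuity of $\exp$ and $\log$ on the extended real line, together with the conventions $e^{-\infty}=0$ and $e^{+\infty}=+\infty$ recorded in the introduction, which cover $\beta = \pm\infty$; as always, either condition forces $x$ to be irrational, so nothing is lost by working inside $\mathbb{I}$. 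Thus I would first establish that $E(1,\beta) = A\big(e^{1+\beta}\big)$ as subsets of $(0,1]$, for all $\beta \in [-\infty,\infty]$.

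Once this identity is in hand, the lemma is immediate from Corollary~\ref{corollary A alpha}. For $\beta \in [-1,\infty]$ one has $1+\beta \in [0,\infty]$ and hence $e^{1+\beta} \in [1,\infty]$, so Corollary~\ref{corollary A alpha} gives $\hdim E(1,\beta) = \hdim A(e^{1+\beta}) = 1$. For $\beta \in [-\infty,-1)$ one has $1+\beta \in [-\infty,0)$ and hence $e^{1+\beta} \in [0,1) \subseteq [-\infty,1)$, so Corollary~\ref{corollary A alpha} gives $A(e^{1+\beta}) = \varnothing$; therefore $E(1,\beta) = \varnothing$ and $\hdim E(1,\beta) = 0$.

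There is no substantial obstacle here beyond bookkeeping; the only point requiring a moment's care is the translation of the two endpoint cases $\beta = \pm\infty$ into statements about $(d_n(x))^{1/n}$ under the stated conventions. For the emptiness half one could even sidestep Corollary~\ref{corollary A alpha} altogether: Proposition~\ref{digit condition lemma}(\ref{digit condition lemma 2}) gives $d_n(x) \ge n$, so $\log d_n(x)/n \ge (\log n)/n$, and letting $n\to\infty$ we obtain $\liminf_{n\to\infty}(\log d_n(x)-n)/n \ge -1$, which already forces $E(1,\beta) = \varnothing$ whenever $\beta < -1$.
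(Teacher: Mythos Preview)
Your proposal is correct and follows essentially the same approach as the paper: both arguments identify $E(1,\beta) = A(e^{1+\beta})$ via the elementary relation between $(\log d_n(x)-n)/n$ and $(d_n(x))^{1/n}$, and then read off the result from Corollary~\ref{corollary A alpha}, handling the extended-real endpoints with the stated conventions. Your additional remark that emptiness for $\beta < -1$ can also be seen directly from Proposition~\ref{digit condition lemma}(\ref{digit condition lemma 2}) is a nice aside but not needed.
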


\begin{proof}
This follows from Corollary \ref{corollary A alpha} and the conventions $\infty + 1 = \infty$, $-\infty + 1 = -\infty$, $e^\infty = \infty$, and $e^{-\infty} = 0$. For any $\beta \in [-\infty, \infty]$ and $x \in (0,1]$, note that
\[
\lim_{n \to \infty} (d_n(x))^{1/n} = e^{\beta + 1} \iff 
\lim_{n \to \infty} \frac{\log d_n(x)-n}{n^1} = \beta,
\]
which shows that $A(e^{\beta+1}) = E(1, \beta)$, where $A(e^{\beta+1})$ is defined in \eqref{definition of A alpha}. By Corollary \ref{corollary A alpha}, we have $\hdim A (e^{\beta+1}) = 1$ if $e^{\beta+1} \in [1, \infty]$, i.e., if $\beta \in [-1,\infty]$; $A(e^{\beta+1}) = \varnothing$ otherwise.
\end{proof}

\begin{lemma} \label{alpha between one and infinity}
For all $\alpha \in (1, \infty)$ and $\beta \in [-\infty, \infty]$, we have
\[
\hdim E(\alpha, \beta) = 
\begin{cases}
1, &\text{if } \beta \in [0, \infty]; \\
0, &\text{if } \beta \in [-\infty, 0), \text{ because } E(\alpha, \beta) = \varnothing.
\end{cases}
\]
\end{lemma}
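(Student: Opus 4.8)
The plan is to obtain the emptiness statement directly from the lower bound $d_n(x)\geq n$, and for each admissible $\beta$ to exhibit a full-dimensional Cantor-type subset of $E(\alpha,\beta)$ by feeding a suitable sequence $(u_n)_{n\in\mathbb{N}}$ into Lemma~\ref{dimension of  E star}.

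\emph{Emptiness for $\beta\in[-\infty,0)$.} For any $x\in\mathbb{I}$ we have $d_n(x)\geq n$ by Proposition~\ref{digit condition lemma}(\ref{digit condition lemma 2}), hence $\log d_n(x)-n\geq\log n-n$, and since $\alpha>1$,
\[
\liminf_{n\to\infty}\frac{\log d_n(x)-n}{n^\alpha}\;\geq\;\lim_{n\to\infty}\frac{\log n-n}{n^\alpha}\;=\;0.
\]
Thus if the limit defining membership in $E(\alpha,\beta)$ exists it is $\geq0$, so $E(\alpha,\beta)=\varnothing$ and $\hdim E(\alpha,\beta)=0$ whenever $\beta<0$.

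\emph{The cases $\beta\in[0,\infty]$.} In each case I would set $u_n\coloneqq 2e^{v(n)}$, where $v(n)\coloneqq\beta n^\alpha$ if $\beta\in(0,\infty)$, $v(n)\coloneqq n^{c}$ for an arbitrary fixed $c\in(0,\alpha)$ if $\beta=0$, and $v(n)\coloneqq n^{\alpha+1}$ if $\beta=\infty$. Then $2\leq u_n\leq u_{n+1}$ for all $n$, so Lemma~\ref{dimension of  E star} applies to the set $\mathcal{E}^*$ of \eqref{definition of E un}. A short computation using the elementary bounds $\tfrac{1}{c+1}n^{c+1}\leq\sum_{k=1}^{n}k^{c}\leq\tfrac{1}{c+1}(n+1)^{c+1}$ (and the fact that $\alpha>1$, so that $n\log n$ is negligible against $n^{\alpha}$ and against $n^{\alpha+1}$) shows that the quantity
\[
\eta=\limsup_{n\to\infty}\frac{n\log n+\log u_{n+1}}{\log\bigl(\prod_{k=1}^{n}u_k\bigr)}
\]
equals $0$ in every case, the numerator being $O\bigl(n\log n+v(n)\bigr)$ while the denominator is $\asymp n\,v(n)$; hence $\hdim\mathcal{E}^*=1$. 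Finally, for $x\in\mathcal{E}^*$ one has $nu_n<d_n(x)\leq(n+1)u_n$, so $\log d_n(x)=v(n)+O(\log n)$, and then $(\log d_n(x)-n)/n^\alpha\to\beta$ in all three cases (for $\beta=0$ this uses $c<\alpha$ together with $\alpha>1$; for $\beta=\infty$ it uses $\alpha+1>\alpha$). Therefore $\mathcal{E}^*\subseteq E(\alpha,\beta)$ and $\hdim E(\alpha,\beta)=1$ by monotonicity. (For $\beta=0$ one could instead invoke Theorem~\ref{theorem C psi beta} with $\psi(n)\coloneqq\sqrt{n}$, which satisfies conditions~(\ref{theorem C psi beta 1}) and~(\ref{theorem C psi beta 2}), since then $C(\psi,1)\subseteq E(\alpha,0)$.)

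I do not expect a real obstacle: once the exponent is chosen, everything reduces to the asymptotics of $\sum_{k\le n}k^{c}$ and to power comparisons, which are routine. The one point requiring a moment's care is that there is no single canonical choice of $u_n$ valid for all $\beta$; one must pick a growth exponent strictly between $1$ and $\alpha$ when $\beta=0$ and strictly above $\alpha$ when $\beta=\infty$, and it is precisely the hypothesis $\alpha>1$ that makes both ranges nonempty — and that simultaneously forces $E(\alpha,\beta)=\varnothing$ for $\beta<0$.
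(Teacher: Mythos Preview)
Your proof is correct and follows essentially the same strategy as the paper: for $\beta\in(0,\infty)$ both proofs feed a power-type growth into the $\mathcal{E}^*$ construction (the paper does this via Theorem~\ref{theorem E phi} with $\phi(n)=\beta n^\alpha$, which unwinds to exactly the Lemma~\ref{dimension of  E star} argument you give), and for $\beta<0$ both derive a contradiction from the trivial lower bound on $d_n(x)$.

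The one genuine difference is in the endpoint cases $\beta=0$ and $\beta=\infty$. You construct explicit full-dimensional subsets in each case by choosing an auxiliary exponent $c\in(0,\alpha)$ or $\alpha+1$; the paper instead argues by inclusion, observing that $E(\alpha',\beta')\subseteq E(\alpha,0)$ for any $\alpha'\in(1,\alpha)$, $\beta'>0$, and $E(\alpha'',\beta'')\subseteq E(\alpha,\infty)$ for any $\alpha''>\alpha$, $\beta''>0$, thereby reducing to the already-settled case $\beta\in(0,\infty)$. Your approach is slightly more self-contained; the paper's is slightly more economical since it avoids repeating the $\eta=0$ computation. Both are routine once one sees that $\alpha>1$ is precisely what makes the two auxiliary ranges nonempty, a point you note explicitly.
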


\begin{proof} 
Let $\alpha \in (1, \infty)$. Assume first that $\beta \in (-\infty, 0)$. We prove that $E(\alpha, \beta) = \varnothing$. Let, if possible, $x \in E(\alpha, \beta)$. Then, for $\varepsilon \coloneqq - \beta/2 > 0$, there exists $N \in \mathbb{N}$ such that $n + (\beta-\varepsilon) n^\alpha < \log d_n(x) < n + (\beta + \varepsilon) n^\alpha$ for all $n \geq N$. But then, since $\alpha>1$ and $\beta < 0$, we have $\log d_n(x) < n + (1/2) \beta n^\alpha < 0$ for all sufficiently large $n \in \mathbb{N}$. This contradicts the fact that the $d_n(x)$ are positive integer-valued so that $\log d_n(x) \geq 0$ for any $n \in \mathbb{N}$. Thus $E(\alpha, \beta) = \varnothing$, as desired. A similar argument shows that $E(\alpha, -\infty) = \varnothing$. 

Now, assume $\beta \in (0, \infty)$. Define $\phi (n) \coloneqq \beta n^{\alpha}$ for each $n \in \mathbb{N}$. Clearly, $\phi \colon \mathbb{N} \to (0, \infty)$ is non-decreasing, and $\gamma \coloneqq \lim_{n \to \infty} \phi (n)/\log n = \infty$. Then $E(\phi) \subseteq E(\alpha, \beta)$, where $E(\phi)$ is defined as in \eqref{definition of E phi}, since for any $x \in E(\phi)$,
\[
\lim_{n \to \infty} \frac{\log d_n(x)-n}{n^\alpha} = \lim_{n \to \infty} \left( \frac{\log d_n(x) - n}{\phi (n)} \cdot \beta \right) = (1-0) \cdot \beta = \beta
\]
so that $x \in E(\alpha, \beta)$. Note that since the map $x \mapsto x^\alpha$ is increasing on $(0, \infty)$, we have
\begin{align*}
0 \leq \xi 
\coloneqq \limsup_{n \to \infty} \frac{\phi(n+1)}{\sum \limits_{k=1}^n \phi (k)} 
= \limsup_{n \to \infty} \frac{\beta (n+1)^\alpha}{\sum \limits_{k=1}^n (\beta k^\alpha)} 
\leq \lim_{n \to \infty} \frac{(n+1)^\alpha}{\displaystyle \int_0^n x^\alpha \, dx}
= \lim_{n \to \infty} \frac{(n+1)^\alpha}{\dfrac{1}{\alpha+1} n^{\alpha+1}}
=0,
\end{align*}
and hence $\xi = 0$. By Theorem \ref{theorem E phi}, $\hdim E( \phi ) = (1+\xi)^{-1} = 1$, and thus $\hdim E(\alpha, \beta) = 1$ by monotonicity of the Hausdorff dimension. 

Take $\alpha' \in (1, \alpha)$ and $\beta' \in (0, \infty)$. Then $E (\alpha', \beta') \subseteq E(\alpha, 0)$, since for any $x \in E(\alpha', \beta')$,
\[
\lim_{n \to \infty} \frac{\log d_n(x)-n}{n^\alpha} = \lim_{n \to \infty} \frac{\log d_n(x)-n}{n^{\alpha'}} \cdot \frac{1}{n^{\alpha-\alpha'}} = \beta' \cdot 0 = 0.
\]
Since $\hdim E(\alpha', \beta') = 1$ by the preceding paragraph, we get $\hdim E(\alpha, 0) = 1$ by monotonicity of the Hausdorff dimension.

Take $\alpha'' \in (\alpha, \infty)$ and $\beta'' \in (0, \infty)$. Then $E (\alpha'', \beta'') \subseteq E(\alpha, \infty)$, since for any $x \in E(\alpha'', \beta'')$,
\[
\lim_{n \to \infty} \frac{\log d_n(x)-n}{n^\alpha} = \lim_{n \to \infty} \frac{\log d_n(x)-n}{n^{\alpha''}} \cdot n^{\alpha''-\alpha} = \beta'' \cdot \infty = \infty.
\]
Since $\hdim E(\alpha'', \beta'') = 1$ by the second paragraph, it follows that $\hdim E(\alpha, \infty) = 1$ by monotonicity of the Hausdorff dimension.
\end{proof}

\begin{proof} [Proof of Theorem \ref{theorem E alpha beta}]
Combine Lemmas \ref{alpha less than one}--\ref{alpha between one and infinity}.
\end{proof}

\subsubsection{The LIL}

We prove Corollary \ref{corollary L beta}, the proof of which is similar to that of Lemma \ref{alpha less than one}.

\begin{proof}[Proof of Corollary \ref{corollary L beta}]
Let $\psi \colon \mathbb{N} \to (0, \infty)$ be defined by 
\[
\psi (n) \coloneqq 
\begin{cases}
n, &\text{if } n \in \{ 1, 2 \}; \\
\sqrt{2n \log \log n}, &\text{if $n \geq 3$}.
\end{cases}
\]

\begin{claim} \label{difference claim 2}
$\psi (n+1) - \psi (n) \to 0$ as $n \to \infty$.
\end{claim}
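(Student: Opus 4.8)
The plan is to reduce this to a one-line calculus estimate via the mean value theorem. Since $\psi(n) = \sqrt{2n\log\log n}$ for every $n \geq 3$, for such $n$ we have $\psi(n+1) - \psi(n) = f(n+1) - f(n)$, where $f \colon (e^e, \infty) \to (0,\infty)$ is defined by $f(x) \coloneqq \sqrt{2x\log\log x}$. This $f$ is differentiable on its domain, so by the mean value theorem there is $\xi_n \in (n,n+1)$ with $\psi(n+1) - \psi(n) = f'(\xi_n)$. Hence it suffices to show $f'(x) \to 0$ as $x \to \infty$, for then $\psi(n+1) - \psi(n) = f'(\xi_n) \to 0$ because $\xi_n \to \infty$.

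First I would compute $f'$. Writing $f = \sqrt{g}$ with $g(x) \coloneqq 2x\log\log x$, we have $g'(x) = 2\log\log x + \tfrac{2}{\log x}$, and therefore
\[
f'(x) = \frac{g'(x)}{2\sqrt{g(x)}} = \frac{\log\log x + (\log x)^{-1}}{\sqrt{2x\log\log x}}.
\]
Next I would bound this crudely. For all sufficiently large $x$ we have $\log\log x \geq 1 \geq (\log x)^{-1}$, so the numerator is at most $2\log\log x$, giving
\[
0 < f'(x) \leq \frac{2\log\log x}{\sqrt{2x\log\log x}} = \sqrt{\frac{2\log\log x}{x}}.
\]
Since $\log\log x = o(x)$, the right-hand side tends to $0$ as $x \to \infty$, which yields $f'(x) \to 0$ and completes the argument.

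I do not expect any genuine obstacle here; the only point requiring a little care is to stay in the region where $f$ is differentiable and where the elementary inequality $(\log x)^{-1} \leq \log\log x$ holds, both of which are automatic once $x$ is large enough. As an alternative one can avoid the mean value theorem altogether: rationalize the difference of the two square roots and estimate the numerator $(n+1)\log\log(n+1) - n\log\log n = n\big(\log\log(n+1)-\log\log n\big) + \log\log(n+1)$ using $\log\log(n+1)-\log\log n \leq \tfrac{1}{n\log n}$, which leads to the same conclusion with essentially the same amount of work.
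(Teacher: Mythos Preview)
Your argument is correct. One small quibble: you declare the domain of $f$ to be $(e^e,\infty)$, but then write ``for every $n\ge 3$ we have $\psi(n+1)-\psi(n)=f(n+1)-f(n)$''; since $3<e^e$, this identity as written only holds for $n\ge 16$. Either widen the domain to $(e,\infty)$, where $f$ is already smooth, or simply say ``for all sufficiently large $n$''---the limit argument is unaffected either way.

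The paper takes your alternative route rather than your main one. It rationalizes the difference of square roots to obtain
\[
\frac{\psi(n+1)-\psi(n)}{\sqrt{2}}=\frac{n[\log\log(n+1)-\log\log n]+\log\log(n+1)}{\sqrt{(n+1)\log\log(n+1)}+\sqrt{n\log\log n}},
\]
dispatches the $\log\log(n+1)$ term directly, and then spends the bulk of the proof showing that $n[\log\log(n+1)-\log\log n]\to 0$ via L'H\^opital's rule (with an auxiliary limit computation along the way). Your mean value theorem approach is genuinely shorter: it collapses everything into a single derivative estimate $f'(x)\le\sqrt{2\log\log x/x}\to 0$, avoiding both the rationalization and L'H\^opital. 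Even your sketch of the alternative improves on the paper slightly, since the bound $\log\log(n+1)-\log\log n\le 1/(n\log n)$ (itself just the mean value theorem for $\log\log x$) shows immediately that $n[\log\log(n+1)-\log\log n]\to 0$ without any L'H\^opital.
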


\begin{proof} [Proof of Claim] \renewcommand\qedsymbol{$\blacksquare$}
For $n \geq 3$, write
\begin{align*}
\frac{\psi (n+1) - \psi (n)}{\sqrt{2}} 
&= \frac{(n+1) \log \log (n+1) - n \log \log n}{\sqrt{(n+1) \log \log (n+1)} + \sqrt{n \log \log n}} \\
&= \frac{n [\log \log (n+1) - \log \log n] + \log \log (n+1)}{\sqrt{(n+1) \log \log (n+1)} + \sqrt{n \log \log n}}.
\end{align*}
Observe that $\sqrt{(n+1) \log \log (n+1)} + \sqrt{n \log \log n} \to \infty$ as $n \to \infty$ and that
\begin{align*}
0 
&\leq \frac{\log \log (n+1)}{\sqrt{(n+1) \log \log (n+1)} + \sqrt{n \log \log n}} \\
&\leq \frac{\log \log (n+1)}{\sqrt{(n+1) \log \log (n+1)}}
= \sqrt{\frac{\log \log (n+1)}{n+1}} 
\to 0
\end{align*}
as $n \to \infty$; hence, it suffices to show that $n [\log \log (n+1) - \log \log n] \to 0$ as $n \to \infty$. First note that 
\begin{align} \label{interim convergence}
\frac{x}{\log x} - \frac{x}{\log (x+1)} 
= \frac{x \log (x+1) - x \log x}{(\log x) (\log (x+ 1))} 
= \frac{\log \left( 1 + x^{-1} \right)^x}{(\log x) (\log (x+ 1))}  
\to \frac{1}{\infty} = 0
\end{align}
as $x \to \infty$. Put 
\[
g(x) \coloneqq x [\log \log (x+1) - \log \log x] = \frac{\log \log (x+1) - \log \log x}{x^{-1}}
\]
for $x \in (1, \infty)$. Notice that both $\log \log (x+1) - \log \log x = \log \left( {\log (x+1)}/{\log x} \right)$ and $x^{-1}$ tend to $0$ as $x \to \infty$. Now, using L'H{\^o}spital's rule, we see that
\begin{align*}
\lim_{x \to \infty} g(x)
&= \lim_{x \to \infty} \frac{[(x+1) \log (x+1)]^{-1} - (x \log x)^{-1}}{- x^{-2}} 
= \lim_{x \to \infty} \left( \frac{x}{\log x} - \frac{x^2}{(x+1) \log (x+1)} \right) \\
&= \lim_{x \to \infty} \left[ \frac{x}{x+1} \left( \frac{1}{\log x} + \frac{x}{\log x} - \frac{x}{\log (x+1)} \right) \right] 
= 1 (0+0) = 0,
\end{align*}
where we have used \eqref{interim convergence} for the second-to-last equality. Thus, $g(n) = n [\log \log (n+1) - \log \log n] \to 0$ as $n \to \infty$, and the claim is proved.
\end{proof}

It is clear that $e^{pn} \psi (n) \to \infty$ as $n \to \infty$ for any $p>0$. Hence, $\psi \colon \mathbb{N} \to (0, \infty)$ satisfies conditions (\ref{theorem C psi beta 1}) and (\ref{theorem C psi beta 2}) in Theorem \ref{theorem C psi beta}. Now, notice that $L(\beta) = C(\psi, \beta)$, where $C(\psi, \beta)$ is defined as in \eqref{definition of C psi beta}. Since $\hdim C(\psi, \beta) = 1$ by Theorem \ref{theorem C psi beta}, we conclude that $\hdim L(\beta) = 1$.
\end{proof}

\begin{proof} [Proof of Corollary \ref{corollary LIL exceptional set}]
Consider the set $L(0)$ defined in \eqref{definition of L beta}. Clearly, $L(0)$ is a subset of the set on which \eqref{law of the iterated logarithm} fails. This is because for any $x$ where \eqref{law of the iterated logarithm} holds, the limit
\[
\lim_{n \to \infty} (\log d_n(x)-n)/\sqrt{2n \log \log n}
\]
does not exist. But $\hdim L(0) = 1$ by Corollary \ref{corollary L beta}, and the result follows by monotonicity of the Hausdorff dimension.
\end{proof}

\begin{remark} \label{remark to proof of corollary LIL exceptional set}
As mentioned in Remark \ref{remark to corollary LIL exceptional set}, we show that Theorem \ref{theorem A kappa} implies Corollary \ref{corollary LIL exceptional set}. To this end, let $\kappa \in (1, \infty)$, and consider the set $A_\kappa$ defined in \eqref{definition of A kappa}. Then for any $x \in A_\kappa$, we have $\log d_n(x)-n \geq (\kappa-1)n > 0$ for all $n \in \mathbb{N}$, so that $(\log d_n(x)-n)/\sqrt{2n \log \log n} \to \infty$ as $n \to \infty$. This shows that no $x \in A_\kappa$ satisfies \eqref{law of the iterated logarithm}, i.e., $A_\kappa$ is a set of exceptions to the LIL. But $\hdim A_\kappa = 1$ by Theorem \ref{theorem A kappa}, and therefore the monotonicity argument yields Corollary \ref{corollary LIL exceptional set}.
\end{remark}

\section*{Acknowledgements}

I am grateful to my advisor, Dr.\ Hanfeng Li, for his very useful comments and suggestions, constant support, and encouragement to work on the subject.

\end{document}